\definecolor{cadmiumgreen}{rgb}{0.0, 0.42, 0.24}
\newtheorem{thm}{Theorem}[section]
\newtheorem{prop}[thm]{Proposition}
\newtheorem{lem}[thm]{Lemma}
\theoremstyle{remark}
\newtheorem{remark}[thm]{Remark}
\newtheorem{example}[thm]{Example}
\theoremstyle{definition}
\newtheorem{definition}[thm]{Definition}
\renewcommand*\env@matrix[1][*\c@MaxMatrixCols c]{
  \hskip -\arraycolsep
  \let\@ifnextchar\new@ifnextchar
  \array{#1}}
\newcommand*\isom{\xrightarrow{\sim}}
\newcommand{\divisor}{\operatorname{div}}
\newcommand{\ord}{\operatorname{ord}}
\newcommand{\Hom}{\operatorname{Hom}}
\newcommand{\Mat}{\operatorname{Mat}}
\newcommand{\Ext}{\operatorname{Ext}}
\newcommand{\Spec}{\operatorname{Spec}}
\newcommand{\Jac}{\operatorname{Jac}}
\newcommand{\Sp}{\operatorname{Sp}}
\newcommand{\Hdg}{\operatorname{Hdg}}
\def\opn#1#2{\def#1{\operatorname{#2}}}
\opn\Vor{Vor}
\def\qq{\mathbb{Q}}
\def\rr{\mathbb{R}}
\def\zz{\mathbb{Z}}
\def\cc{\mathbb{C}}
\def\gg{\mathbb{G}}
\def\UU{\mathcal{U}}
\def\mm{\mathcal{M}}
\def\ll{\mathcal{L}}
\def\bb{\mathcal{B}}
\def\CC{\mathcal{C}}
\def\jj{\mathcal{J}}
\def\aa{\mathcal{A}}
\def\oo{\mathcal{O}}
\def\dd{\mathbb{D}}
\def\ee{\mathcal{E}}
\def\Im{\mathrm{Im}\,}
\def\d{\mathrm{d}}
\def\sing{\mathrm{sing}}
\def\id{\mathrm{id}}
\def\can{\mathrm{can}}
\def\Gr{\mathrm{Gr}}
\def\Id{\mathrm{Id}}
\def\swt{\boldsymbol{\sigma}}
\numberwithin{equation}{section}
\subjclass[2010]{
\href{https://mathscinet.ams.org/msc/msc2010.html?t=14C30}{14C30},
\href{https://mathscinet.ams.org/msc/msc2010.html?t=14D07}{14D07},
\href{https://mathscinet.ams.org/msc/msc2010.html?t=14H15}{14H15},
\href{https://mathscinet.ams.org/msc/msc2010.html?t=32G20}{32G20}}
\begin{document}

\title[Jumps in the height of the Ceresa cycle]{Jumps in the height of the Ceresa cycle}

\author{Robin de Jong}
\address{Leiden University \\ PO Box 9512  \\ 2300 RA Leiden  \\ The Netherlands }
\email{\href{mailto:rdejong@math.leidenuniv.nl}{rdejong@math.leidenuniv.nl}}

\author{Farbod Shokrieh}
\address{University of Washington \\ Box 354350 \\ Seattle, WA 98195 \\ USA }
\email{\href{mailto:farbod@uw.edu}{farbod@uw.edu}}

\begin{abstract} We study the jumps in the archimedean height of the Ceresa cycle, as introduced by R.\ Hain in his work on normal functions on moduli spaces of curves, and as further analyzed by P.\ Brosnan and G.\ Pearlstein in terms of asymptotic Hodge theory. Our work is based on a study of the asymptotic behavior of the Hain-Reed beta-invariant in degenerating families of curves. We show that the height jump of the Ceresa cycle at a given stable curve is equal to the so-called ``slope'' of the dual graph of the curve, and we characterize those stable curves for which the height jump vanishes. We also obtain an analytic formula for the height of the Ceresa cycle for a curve over a function field over the complex numbers, and characterize in analytic terms when the height of the Ceresa cycle vanishes. 
\end{abstract}

\maketitle

\setcounter{tocdepth}{1}


\thispagestyle{empty}

\section{Introduction}
\renewcommand*{\thethm}{\Alph{thm}}

\subsection{Motivation and background} 

The central object of study in this paper is the so-called \emph{Hain-Reed line bundle} $\bb$ on the moduli space of curves $\mm_g$, where $g \geq 2$ is an integer. The line bundle $\bb$ is introduced in \cite{hrar} and comes equipped with a canonical smooth hermitian metric $\|\cdot\|_\bb$. The curvature form of the metric $\|\cdot\|_\bb$  is semi-positive, and the metric gives rise to the archimedean contributions  in computing the Arakelov height of the Ceresa cycle $C - C^-$ in the Jacobian of a genus~$g$ curve~$C$.

A natural question, which forms the point of departure of \cite{hrar},  is to what extent the canonical metric $\|\cdot\|_\bb$ extends over $ \overline{\mm}_g$, the Deligne-Mumford compactification of $\mm_g$. Let $\varDelta_0$ denote the component of the boundary divisor $\varDelta$ of $ \overline{\mm}_g$ whose generic point corresponds to an irreducible stable curve with one node. In their paper, R.~Hain and D.~Reed show that the metric $\|\cdot\|_\bb$ extends in a smooth manner over $\widetilde{\mm}_g = \overline{\mm}_g \setminus \varDelta_0$, the moduli space of curves of compact type, and that  the metric $\|\cdot\|_\bb$ extends continuously over any holomorphic arc in $ \overline{\mm}_g$ that meets the component $\varDelta_0$ transversally. 

Given this result it is reasonable to ask whether the metric $\|\cdot\|_\bb$ extends continuously over the entire $ \overline{\mm}_g$. In the follow-up paper \cite{hain_normal} Hain shows, by means of an example, that this is \emph{not} the case. In order to probe the singularities of the metric $\|\cdot\|_\bb$  near the points of $\varDelta$,  Hain introduces in \cite{hain_normal} what he calls the \emph{height jump} of the metric $\|\cdot\|_\bb$. 

\subsection{The jump in the height of the Ceresa cycle}
When $p \in \overline{\mm}_g$ is a point, Hain's height jump at $p$ is a certain homogeneous weight one element $j(p) \in \qq(x_i \, | \, i \in \mathcal{I})$, where $\mathcal{I}$ is a set indexing the local branches of the boundary divisor $\varDelta$ at $p$. The height jump $j(p)$ at $p$ vanishes if the metric $\|\cdot\|_\bb$ extends continuously in an open neighborhood of~$p$; thus, to give a counterexample to the continuous extendability of the metric $\|\cdot\|_\bb$,  it suffices to find a point $p  \in \overline{\mm}_g$ where the height jump $j(p)$ is non-trivial. 

The height jump $j(p)$  is locally constant, and specializes upon generalization, on the  strata of $\overline{\mm}_g$ determined by the normal crossings divisor~$\varDelta$. In particular, the subset of $\overline{\mm}_g$ where the height jump vanishes is closed under generalization and  determines a canonical open subset $\mm_g^\flat$ of $\overline{\mm}_g$. The result by Hain and Reed shows that $\mm_g^\flat$ contains $\overline{\mm}_g \setminus \varDelta_0^\sing$, where $\varDelta_0^\sing$ denotes the singular locus of $\varDelta_0$.  One aim of the present paper is to give a characterization of $\mm_g^\flat$ in geometrical and combinatorial terms, see Theorem~\ref{cor:vanishing}.

In \cite{hain_normal} Hain stated two conjectures regarding the asymptotic behavior of the metric on the line bundle $\bb$. One is that for each $(m_i)_{i\in \mathcal{I}} \in \qq_{>0}^\mathcal{I}$ the rational number $j(p;(m_i)_{i\in \mathcal{I}})$ is non-negative; the other is that for all smooth connected complex curves $T$ with projective completion $\overline{T}$ and all morphisms $f \colon T \to \mm_g$, the first Chern form $c_1(f^* \bb)$ extends as a semi-positive $(1,1)$-current over $\overline{T}$. Both conjectures have been proved; by P.~Brosnan and G.~Pearlstein in \cite{bp} and independently by J. Burgos Gil, D.~Holmes and the first named author in \cite{bghdj}.

\subsection{The work of Brosnan and Pearlstein} \label{sec:BP_work}
In \cite{bp}, Brosnan and Pearlstein analyze the height jump phenomenon in a broader setting. In fact they study the singularities of the heights of \emph{biextension variations of mixed Hodge structure} in complete generality and relate the general height jump to a canonical pairing, called the \emph{asymptotic height pairing}, on the local intersection cohomology groups of the underlying variations of pure Hodge structure. 

Specializing the general results of \cite{bp} to the setting of the line bundle $\bb$ we find, among many other things, the  following improvements of the extension results of Hain and Reed mentioned above.  
\begin{itemize}
\item[(i)] The line bundle $\bb$ extends uniquely to a hermitian line bundle $\bb^*$ with a continuous metric over $\overline{\mm}_g \setminus \varDelta_0^\sing$;
\item[(ii)] The line bundle $\bb^*$ extends uniquely to a hermitian  line bundle $\overline{\bb}$ with plurisubharmonic local potentials over $\overline{\mm}_g$;
\item[(iii)] Let $p \in \overline{\mm}_g$ be a point and let $j(p) \in \qq(x_i \, | \, i \in \mathcal{I})$ be the jump in the height of the Ceresa cycle at $p$. The following two assertions are equivalent:
\begin{itemize}
\item the height jump $j(p)$ vanishes;
\item the plurisubharmonic metric on $\overline{\bb}$ has bounded local potentials near~$p$.
\end{itemize}
\end{itemize}
Note that item (iii) shows once more that the locus of $\overline{\mm}_g$ where the height jump vanishes is an open set. Item (ii) immediately leads to a proof of the second conjecture of Hain mentioned above.

\subsection{Aim of this paper} 
Let $p \in \overline{\mm}_g$ be any point. The main aim of this paper is to give an explicit combinatorial expression for the height jump $j(p)$ at $p$ in terms of the dual graph of the stable curve $C$ corresponding to $p$. In very brief terms, we show that the height jump equals the so-called \emph{slope} of the dual graph of $C$ -- see Theorem~\ref{main:intro}. 

The slope is a certain invariant of polarized  graphs which appears implicitly in fundamental work of S.~Zhang \cite{zhgs} and Z.~Cinkir \cite{ci}  on the height of the canonical Gross-Schoen cycle for curves over global fields. 

In order to define the slope we first need to introduce the $\lambda$-invariant of a polarized graph.

\subsection{The $\lambda$-invariant of a polarized graph} \label{sec:lambda_graph}
For the notions of \emph{polarized graphs} and \emph{weighted graphs} we refer to \S\ref{pm_graphs}.  We will use the letter $G$ to refer to graphs but also to weighted graphs if the underlying edge lengths are clear. We use the notation $\overline{G}=(G,\mathbf{q})$ for polarized (weighted) graphs.

When $G$ is a connected (unweighted) graph and  $\mathbf{q}$ a polarization of $G$, then the polarized graph $\overline{G}=(G,\mathbf{q})$ has associated to it a \emph{genus} $\overline{g}(\overline{G}) \in \zz_{>0}$. For example, the dual graph of a stable curve of genus~$g \geq 2$ is naturally a connected polarized graph of genus~$g$. 

When $S$ is a connected Dedekind scheme, and $\pi \colon \mathcal{X} \to S$ is a stable curve of genus~$g \geq 2$ with smooth generic fiber, and $s$ is a closed point of $S$, then the dual graph of the fiber $\mathcal{X}_s$ of $\mathcal{X}$ at $s$ has a natural structure of a connected polarized  \emph{weighted} graph of genus~$g$. Here, the lengths of the edges are determined by the ``thicknesses'' of the singular points of $\mathcal{X}_s$ on the surface~$\mathcal{X}$.

Now let $G$ be a connected weighted graph with length function $m$, and let $\mathbf{q}$ be a polarization of $G$. Let $\overline{G}=(G,\mathbf{q})$ denote the resulting polarized weighted graph. Then to $\overline{G}$ Zhang associates in \cite[Section~1.4]{zhgs} a real-valued invariant $\lambda(\overline{G})$. From the definition of $\lambda(\overline{G})$ it is not difficult to see that when $(G,\mathbf{q})$ is a connected polarized unweighted graph with edge set $E$, the map that assigns to each element $m \in \rr_{>0}^E$ the real number $\lambda(G,\mathbf{q};m)$ is represented by a homogeneous weight one element $\lambda(G,\mathbf{q}) \in \qq(x_e \, | \, e \in E)$. 

It follows that we can also talk about the $\lambda$-invariant of a connected polarized graph, treating the edge lengths as variables.

\subsection{Height of the Ceresa cycle in the function field case} \label{sec:height_Cer_FF}
We will not give the precise definition of Zhang's $\lambda$-invariant here in the introduction as it is a little involved, but instead we mention its connection to the computation of the height of the Ceresa cycle in the function field case, following Zhang's paper \cite{zhgs}. 

Let $\overline{S}$ be a smooth projective geometrically connected curve over a field $k$. Let $\pi \colon \mathcal{X} \to \overline{S}$ be a stable curve of genus $g \geq 2$ with smooth generic fiber. The generic fiber of $\mathcal{X}$ can be viewed as a smooth projective geometrically connected curve of genus $g$ over the function field of $\overline{S}$. As such it has a natural associated height of the Ceresa cycle $c(\mathcal{X}/\overline{S})$ and height of the canonical Gross-Schoen cycle $\langle \Delta, \Delta \rangle(\mathcal{X}/\overline{S})$. Both heights are rational numbers, and it follows from  \cite[Theorem~1.5.6]{zhgs} that the two heights are related by the simple relation
\begin{equation} \label{ceresa_gs}
c(\mathcal{X}/\overline{S}) = \frac{2}{3} \langle \Delta, \Delta \rangle (\mathcal{X}/S) \, .
\end{equation}
For each closed point $s \in S$ we denote by $\overline{G}_s$ the polarized weighted graph associated to the fiber of $\pi$ at $s$ as explained above. We write
\begin{equation}
 \lambda(\mathcal{X}/\overline{S}) =  \sum_{s \in |\overline{S}|} \lambda(\overline{G}_s) \, . 
\end{equation}
This is a finite sum, as the $\lambda$-invariant of a point graph is zero.
Let $h(\mathcal{X}/\overline{S}) = \deg \det \pi_* \omega_{\mathcal{X}/\overline{S}}$ be the modular height of the stable curve $\pi$.
A combination of (\ref{ceresa_gs}) and \cite[Equation~(1.4.2)]{zhgs} leads to the fundamental relation
\begin{equation} \label{mod_height:intro}
h(\mathcal{X}/\overline{S})  = \frac{3g-3}{2g+1} c(\mathcal{X}/\overline{S}) +   \lambda(\mathcal{X}/\overline{S}) 
\end{equation}
between the modular height $h(\mathcal{X}/\overline{S}) $, the height of the Ceresa cycle $c(\mathcal{X}/\overline{S})$ and the $\lambda$-invariants of the closed fibers of $\pi$.

\subsection{The slope of a polarized graph} 
Let $\overline{G} = (G,\mathbf{q})$ be a connected polarized weighted graph of genus $g \geq 2$. We denote by $\delta_0(G)$ the total length of the edges of $G$ that do not disconnect the graph $G$ upon removal. For $h \in \{1,\ldots,[g/2] \}$ we denote by $\delta_h(\overline{G})$ the total length of the edges of $G$ whose removal from $G$ results in the disjoint union of a polarized graph of genus $h$ and a polarized graph of genus $g-h$. 

\medskip

\noindent \textbf{Definition.} We define the \emph{slope} of the polarized weighted graph $\overline{G} = (G,\mathbf{q})$  to be the real number
\begin{equation} \label{def:slope}
 s(\overline{G}) = (8g+4) \, \lambda(\overline{G}) - g \, \delta_0(G) - \sum_{h=1}^{[g/2]} 4h(g-h) \, \delta_h(\overline{G}) \, . 
\end{equation}

\medskip 

The terminology ``slope'' is explained by the following relation with the celebrated ``slope inequality'' found by A.\ Moriwaki  \cite[Theorem~D]{moriw}. 

\subsection{Moriwaki's slope inequality} \label{sec:mori}
As in \S\ref{sec:height_Cer_FF} let $\overline{S}$ be a smooth projective geometrically connected curve over a field $k$, and let $\pi \colon \mathcal{X} \to \overline{S}$ be a stable curve of genus $g \geq 2$ with smooth generic fiber.  Continuing with the notation introduced in \S\ref{sec:height_Cer_FF} we define
\begin{equation}
 s(\mathcal{X}/\overline{S}) =  \sum_{s \in |\overline{S}|} s(\overline{G}_s) \, , \quad 
 \delta_0(\mathcal{X}/\overline{S}) = \sum_{s \in |\overline{S}|} \delta_0(G_s) \, , \quad
\delta_h(\mathcal{X}/\overline{S}) = \sum_{s \in |\overline{S}|} \delta_h(\overline{G}_s)   
\end{equation}
for $ h=1,\ldots, [g/2]$. From (\ref{mod_height:intro}) and (\ref{def:slope}) we immediately obtain the following relation:
\begin{equation} \label{mod_heightII:intro}
\begin{split} 
(8g+4) h(\mathcal{X}/\overline{S}) - g \,  \delta_0(\mathcal{X}/\overline{S}) - \sum_{h=1}^{[g/2]} & 4h(g-h) \, \delta_h(\mathcal{X}/\overline{S}) \\ & = 12(g-1) c(\mathcal{X}/\overline{S}) +  s(\mathcal{X}/\overline{S}) \, . \end{split} 
\end{equation}
We define $m(\mathcal{X}/\overline{S})$ to be the left hand side of (\ref{mod_heightII:intro}). Assuming that $\mathrm{char}(k)=0$, Moriwaki's slope inequality states that 
\begin{equation}
m(\mathcal{X}/\overline{S}) \geq 0 \, . 
\end{equation}
The following was conjectured by Zhang \cite[Conjecture~1.4.5]{zhgs} and proved by Cinkir \cite[Theorem~2.13]{ci}.

\medskip
\noindent \textbf{Theorem.} For all connected polarized weighted graphs $\overline{G}$ of genus $g\geq 2$ the slope $s(\overline{G})  $ is a non-negative real number. 
\medskip

Cinkir's theorem implies  that the term $s(\mathcal{X}/\overline{S})$ in (\ref{mod_heightII:intro})  is non-negative. Also, it can be proved (see, e.g. \cite[Theorem~1]{zh_hodge} -- still under the assumption that $\mathrm{char}(k)=0$) that the height  $\langle \Delta, \Delta \rangle (\mathcal{X}/\overline{S}) $ of the canonical Gross-Schoen cycle is non-negative. Thus, by (\ref{ceresa_gs}) we have that the height $c(\mathcal{X}/\overline{S}) $ of the Ceresa cycle is non-negative. Combining these two non-negativity results we see that (\ref{mod_heightII:intro}) can be viewed as a refinement of Moriwaki's inequality, as was also observed by Zhang and Cinkir in their work.

\subsection{Main result}

Similar to the $\lambda$-invariant, when $(G,\mathbf{q})$ is a connected polarized unweighted graph, the map that assigns to each $m \in \rr_{>0}^E$ the real number $s(G,\mathbf{q};m)$ is represented by a homogeneous weight one function $s(G,\mathbf{q}) \in \qq(x_e \, | \, e \in E)$. It follows that we can also talk about the slope of a connected polarized graph, treating the edge lengths as variables. The slope will occur as such in our main result, to be discussed next.

 Let $g \geq 2$ be an integer, and let $p \in \overline{\mm}_g$ be a point. Let $C$ be the stable curve of genus~$g$ corresponding to $p$, and let $(G, \mathbf{q})$ be the dual graph of $C$, viewed as a connected polarized graph.
 Let $\mathcal{I}$ be the set of local branches of the boundary divisor $\varDelta$ at $p$, and let $E$ be the set of edges of $G$. As is well known we have a canonical bijection $\mathcal{I} \isom E$. 
 
 Our main result is as follows.
 \begin{thm} \label{main:intro}  The height jump of the Ceresa cycle at the moduli point $p$ equals the slope of the dual graph $(G,\mathbf{q})$. More precisely, under the canonical bijection $\mathcal{I} \isom E$ the elements $j(p) \in \qq(x_i \, | \, i \in \mathcal{I})$ and $s(G,\mathbf{q}) \in \qq(x_e \, | \, e \in E)$ coincide.
\end{thm}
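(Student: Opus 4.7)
The plan is to extract $j(p;m)$ from a one-parameter degeneration with prescribed combinatorics and match it to $s(\overline{G};m)$ via the function-field identity (\ref{mod_heightII:intro}). By the weight-one homogeneity shared by $j(p)$ and $s(G,\mathbf{q})$, it suffices to prove $j(p;m) = s(G,\mathbf{q};m)$ for each $m = (m_e)_{e\in E} \in \zz_{>0}^E$.

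Fix such an $m$, and construct a stable curve $\pi\colon\mathcal{X}\to T$ over a smooth projective connected complex curve $T$ having smooth generic fiber and a unique singular fiber $\mathcal{X}_0$ whose dual graph is $(G,\mathbf{q})$ and in which the node corresponding to edge $e \in E$ has thickness $m_e$. Such a $\pi$ exists by deformation theory, for instance via a suitable ramified base change of a versal smoothing of the stable curve $C$ attached to $p$. Applying (\ref{mod_heightII:intro}) to $\pi$ and using that $\mathcal{X}_0$ is the only singular fiber gives
\begin{equation*}
(8g+4)\,h(\mathcal{X}/T) \;-\; g\,\delta_0(\mathcal{X}/T) \;-\; \sum_{h=1}^{[g/2]}4h(g-h)\,\delta_h(\mathcal{X}/T) \;=\; 12(g-1)\,c(\mathcal{X}/T) + s(\overline{G};m) \, .
\end{equation*}
The left-hand side is the degree on $T$ of the pullback of a fixed $\qq$-line bundle on $\overline{\mm}_g$ whose natural metric (the Faltings metric on the Hodge bundle, and the tautological metrics on the boundary divisors $\varDelta_h$) is continuous on all of $\overline{\mm}_g$; in particular its value depends only on the linear intersection data of how $T$ meets the boundary at $0$. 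The analytic quantity $c(\mathcal{X}/T)$ is computed from the Hain-Reed metric on $\bb$, whose plurisubharmonic extension $\overline{\bb}$ (cf.\ \S\ref{sec:BP_work}) differs from its naive continuous extension by a local log-asymptotic at $0$ whose leading coefficient along the arc $t\mapsto (t^{m_e})$ is, up to Hain's normalization, the height jump $j(p;m)$.

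Isolating the local contribution at $0$ on both sides of the identity matches the slope $s(\overline{G};m)$ with this local asymptotic discrepancy, yielding $j(p;m) = s(G,\mathbf{q};m)$. The principal difficulty is precisely this matching step: a rigorous description of the local asymptotic behavior of the Hain-Reed $\beta$-invariant along an arbitrary degeneration is needed, and one must disentangle the ``naive'' log-polar part of the Hain-Reed metric at $p$ from its jump part. I would carry this out by combining the asymptotic height pairing on intersection cohomology of Brosnan and Pearlstein (which expresses $j(p;m)$ in Hodge-theoretic terms using the limit mixed Hodge structure attached to the Ceresa variation at $p$) with an explicit combinatorial evaluation of this pairing in terms of $(G,\mathbf{q},m)$ using Zhang's description of the local $\lambda$-invariant, checking compatibility of normalizations on a reference case such as the theta-graph degeneration originally considered by Hain.
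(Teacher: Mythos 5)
Your proposal contains a genuine gap, and it is the central one. You want to extract the height jump $j(p;m)$ by applying the global function-field identity (\ref{mod_heightII:intro}) to a carefully chosen one-parameter family $\pi\colon\mathcal{X}\to T$ with a single singular fiber, and then ``isolating the local contribution at $0$.'' But the two sides of that identity are global quantities: the left-hand side is an intersection number on $T$, and $c(\mathcal{X}/T)$ is defined algebraically as the self-intersection of a cycle on a three-fold model. There is no canonical decomposition of either side into a ``contribution at $0$'' plus a remainder without already knowing the local asymptotic behavior of the metrics involved. In particular, your claim that $c(\mathcal{X}/T)$ is ``computed from the Hain--Reed metric'' via integration of the extended Chern current is itself a theorem (it is exactly Theorem~\ref{height_Ceresa_intro}), and in the paper that theorem is a \emph{consequence} of the main asymptotic input, not an independent fact you can take for granted. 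So the proposal relocates rather than fills the gap.

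You do acknowledge the difficulty and suggest filling it by evaluating the Brosnan--Pearlstein asymptotic height pairing on intersection cohomology in terms of $(G,\mathbf{q},m)$ and ``Zhang's description of the local $\lambda$-invariant.'' There are two problems here. First, no such theorem of Zhang exists: Zhang defines a combinatorial $\lambda$-invariant of a polarized metric graph and, separately, an archimedean $\lambda$-invariant of a Riemann surface, but the statement that the graph invariant controls the asymptotics of the archimedean invariant in a degeneration is precisely Theorem~\ref{lambda_asympt_intro}, which is the paper's main technical result, proved via an asymptotic analysis of Autissier's $I$-invariant. You cannot cite it; it is what must be proved. Second, the authors explicitly report (in \S1.10) that the Brosnan--Pearlstein approach -- Koszul complexes, Johnson's homomorphism on the Torelli group -- did not appear straightforward to generalize beyond the two-component example of \cite[Theorem~241]{bp}; so proposing that route without any new idea is not a plan that would work.

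For contrast, the paper's actual proof is purely local and bypasses (\ref{mod_heightII:intro}) entirely. It writes $j(p)(m)\log|t|$ directly from Definition~\ref{def:ht_jump} as a combination of asymptotics of $\log\|\mathcal{V}\|_\bb$ along a test arc with multiplicities $m$ and along transverse arcs through the branches $D_i$. Using the isomorphism $\phi\colon\bb\isom\ll^{\otimes 8g+4}$ and the identity $(8g+4)\lambda=\log(\|\cdot\|_\bb/\phi^*\|\cdot\|_{\Hdg})$, the jump is converted into a difference of archimedean $\lambda$-invariants plus Hodge-metric terms; Proposition~\ref{mumford}(iii) kills the Hodge-metric terms, and Theorem~\ref{lambda_asympt_intro} replaces each archimedean $\lambda$ by the graph $\lambda$-invariant of the relevant contraction, yielding $j(p)(m)=(8g+4)\bigl(\lambda(G,\mathbf{q};m)-\sum_i\lambda(G_i,\mathbf{q}_i;m_i)\bigr)$. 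The sum over $i$ is then identified with $g\,\delta_0+\sum_h 4h(g-h)\delta_h$ via Examples~\ref{ex:loop} and~\ref{ex:segment}, which matches the definition of the slope. Your opening reduction to integer vectors by weight-one homogeneity is fine (the paper makes the same observation in the last line of \S\ref{lambda_asympt_gives_jump}), but after that point your plan diverges from a viable argument.
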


\subsection{An example}
As an illustration of Theorem~\ref{main:intro} we consider the case where $C$ consists of two smooth irreducible components of genera $h$ and $g-h-1$ attached in two points. The stability of $C$ ensures that $(g-h-1)h>0$. In \cite[Theorem~241]{bp} it is shown that the height jump of the Ceresa cycle at the moduli point corresponding to~$C$ in $\overline{\mm}_g$ is equal to
\begin{equation} \label{jump_2_gon}
  j(p)(x_1,x_2) = \frac{4x_1x_2}{x_1+x_2} (g-h-1)h  \, .
\end{equation}
In particular, the height jump is non-trivial in this case.  This example generalizes the example of a non-trivial height jump originally given by Hain in  \cite{hain_normal}. 

Let $(G,\mathbf{q})$ be the dual graph of $C$. Then $G$ consists of two vertices of genera $h$ and $g-h-1$, joined by two edges. From Example~\ref{lambda_for_two_gon} it follows that
\begin{equation}
 s(G,\mathbf{q})(x_1,x_2) =  \frac{4x_1x_2}{x_1+x_2} (g-h-1)h  \, . 
\end{equation}
Thus using Theorem~\ref{main:intro} we are able to reproduce the result in (\ref{jump_2_gon}). 

The proof  of (\ref{jump_2_gon}) in \cite{bp} is based on an analysis of the asymptotic height pairing in terms of a suitable partial Koszul complex computing intersection cohomology together with an
explicit computation featuring Johnson's homomorphism on the Torelli group. 
It seems not straightforward to us  to generalize the strategy leading to \cite[Theorem~241]{bp} to handle other cases.

\subsection{Vanishing of the height jump}
Using Theorem~\ref{main:intro} we can give a complete characterization of when the height jump vanishes.
As above let $p \in \overline{\mm}_g$ be a point and let $C$ be the corresponding stable curve.
\begin{thm} \label{cor:vanishing}  The height jump of the Ceresa cycle at $p$ vanishes if and only if one of the following two conditions holds:
\begin{itemize}
\item[(a)] the curve $C$ consists of two smooth rational irreducible components, joined in $g+1$ points, or 
\item[(b)] the curve $C$ is tree-like, i.e.,   its irreducible components form a tree.
\end{itemize}
\end{thm}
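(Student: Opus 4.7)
By Theorem~\ref{main:intro}, the height jump $j(p)$ coincides, under the identification $\mathcal{I}\cong E$, with the slope $s(G,\mathbf{q})\in\qq(x_e\mid e\in E)$ of the dual graph of $C$. The problem therefore reduces to characterizing connected polarized graphs of genus $g\geq 2$ on which $s$ vanishes identically. Since Cinkir's theorem (the Theorem in \S\ref{sec:mori}) provides the pointwise bound $s(G,\mathbf{q};m)\geq 0$ for all $m\in\rr_{>0}^E$, vanishing of $s$ as a rational function is equivalent to vanishing at any single positive edge-length vector.

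For the ``if'' direction I verify both cases directly. In case (b), when $G$ is a tree with polarization $\mathbf{q}$ of total mass $g$, every edge $e$ is separating, and Zhang's recursive formulas for $\lambda$ on metric graphs yield the closed form
$$
\lambda(\overline{G}) \;=\; \sum_{e\in E}\frac{h_e(g-h_e)}{2g+1}\,x_e,
$$
where $h_e$ is the genus of one side of the splitting induced by $e$. Combined with the identity $(8g+4)\cdot\tfrac{h_e(g-h_e)}{2g+1}=4h_e(g-h_e)$, with $\delta_0(G)=0$, and with $\delta_{h'}(\overline{G})=\sum_{e:\,h_e=h'}x_e$, this gives $s(\overline{G})=0$ edge by edge. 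In case (a), I compute $\lambda$ on the ``banana'' graph with two vertices of polarization $0$ joined by $g+1$ parallel edges, extending Example~\ref{lambda_for_two_gon} via the resistance-theoretic formula for $\lambda$; the resulting value $\lambda=\tfrac{g}{4(2g+1)}\sum_e x_e$ cancels $g\,\delta_0=g\sum_e x_e$ in the slope, while the $\delta_{h'}$-terms all vanish for $h'\geq 1$.

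The ``only if'' direction is the main task. My strategy is to track the equality case in Cinkir's proof of $s\geq 0$ from \cite[Theorem~2.13]{ci}. Cinkir's argument proceeds through a series of graph-theoretic reductions (bridge removal, block decomposition into two-edge-connected pieces, ``stick'' contraction) under each of which $s$ strictly decreases unless specific structural conditions on $(G,\mathbf{q})$ are met; tracing these equality conditions backward through the reduction shows that any polarized graph with $s\equiv 0$ must either collapse entirely to a union of single vertices (forcing $G$ to be a tree, hence $C$ tree-like, which is case (b)) or reduce to the two-vertex banana configuration described above (which, together with stability of $C$, gives case (a)).

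\textbf{Main obstacle.} The equality analysis inside Cinkir's proof is the principal technical difficulty, since Cinkir's original presentation emphasizes non-negativity rather than the classification of extremals. A fallback I have in mind, should the direct tracking prove too delicate, is to decompose $s$ under one-vertex wedges and bridge cuts into a sum of non-negative local contributions, apply Cinkir's result separately on each two-edge-connected block of $G$, and then classify the extremal two-edge-connected polarized graphs directly by combining Zhang's potential-theoretic formulas for $\lambda$ with a strict convexity or variational argument on the simplex of positive edge-length vectors.
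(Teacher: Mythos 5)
Your reduction via Theorem~\ref{main:intro} to the vanishing of the slope $s(G,\mathbf q)$, and your observation (via Cinkir's non-negativity) that vanishing as a rational function is equivalent to vanishing at a single edge-length vector, are both correct and are the starting point of the paper's proof as well. Two remarks before the main point. First, your ``if'' verification for case~(b) is carried out under the assumption that $G$ itself is a tree; but ``tree-like'' means only that the \emph{components} of $C$ form a tree, so the dual graph $G$ may also carry loops (self-nodes of irreducible components). These are harmless --- by Example~\ref{ex:loop} a loop contributes $g\,x_e$ to $(8g+4)\lambda$ and also $g\,x_e$ to $g\,\delta_0$, so the two cancel --- but they do need to be accounted for. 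Second, the claimed closed form $\lambda(\overline G)=\sum_e\frac{h_e(g-h_e)}{2g+1}x_e$ for a tree, and $\lambda=\frac{g}{4(2g+1)}\sum_e x_e$ for the banana, are correct but should really be justified via the block-additivity of Zhang's invariants together with Examples~\ref{ex:loop} and~\ref{ex:segment}; the paper proves the ``if'' direction more economically by invoking Proposition~\ref{prop:additivity} and the formula in Example~\ref{ex:bridgelessSlopes}, under which the banana and the loops give identically zero summands.

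The substantial divergence is in the ``only if'' direction. Your primary strategy --- tracking the equality case inside Cinkir's proof of $s\geq 0$ --- is a genuinely different route and, as you acknowledge, a difficult one, because Cinkir's argument is organized around a cascade of reductions for proving non-negativity, not around isolating extremals. The paper does \emph{not} do this. It uses exactly your fallback's first step --- block-tree decomposition reduces the problem to $2$-connected blocks via additivity of $\lambda$ (hence of $s$), Proposition~\ref{prop:additivity} --- and then replaces your proposed ``strict convexity or variational argument'' by something much more concrete: the resistance-theoretic formula of Example~\ref{ex:bridgelessSlopes}, which writes $s(\overline\varGamma)=\sum_{p\in V}(v(p)-2+2\mathbf q(p))\swt(p)$ with each $\swt(p)$ a manifestly non-negative sum of terms $r(p,q)\mathbf q(q)$ and $j^{\varGamma\setminus e}_p(e^+,e^-)\mathsf F(e)$. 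Since the minimal model is stable, the weights $v(p)-2+2\mathbf q(p)$ are strictly positive, so vanishing of $s$ forces every single one of these elementary terms to be zero. From there the classification is a short combinatorial argument: if $\mathbf q\not\equiv 0$ and $|V|\geq 2$ some $r(p,q)\mathbf q(q)>0$; if $|V|\geq 3$ and $G$ is $2$-connected one uses Whitney's ear decomposition to produce an edge $e$ with $G\setminus e$ still $2$-connected, so $j^{\varGamma\setminus e}_p(e^+,e^-)>0$ for some vertex $p\notin\{e^+,e^-\}$. This forces each $2$-connected block to be either a loop or a two-vertex graph with $\mathbf q\equiv 0$; a further short argument shows a two-vertex block cannot coexist with any other block, yielding precisely cases~(a) and~(b). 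So the gap in your fallback is the absence of the explicit positive-sum formula and the ear-decomposition device; a vague appeal to convexity on the edge-length simplex will not by itself pin down these two extremal configurations.
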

It is clear from Theorem~\ref{main:intro} that proving Theorem~\ref{cor:vanishing} is a matter of characterizing for which polarized weighted graphs $(G,\mathbf{q})$  the slope vanishes, and interpreting the result in geometric terms. This task will be carried out in \S\ref{vanishing_slope} (Theorem~\ref{thm:slopezero}).

\subsection{The Hain-Reed $\beta$-invariant} \label{sec:HRbeta}

Our approach to Theorem~\ref{main:intro} is close in spirit to the original paper \cite{hrar} of Hain and Reed and is based on a study of a certain class of functions $\beta \colon \mm_g \to \rr$ introduced in \cite{hrar}. The jump in the height of the Ceresa cycle can be rewritten in terms  of the asymptotics of $\beta$ near the boundary divisor~$\varDelta$. 

Let $p \colon \CC_g \to \mm_g$ be the universal Riemann surface. As is shown in \cite{hrar}, the underlying line bundle of $\bb$ is isomorphic to $\ll^{\otimes 8g+4}_p$, where $\ll_p =\det p_* \omega_{\CC_g/\mm_g}$  is the determinant of the Hodge bundle on $\mm_g$. We recall that the line bundle $\ll_p$ is equipped with a natural smooth hermitian metric $\| \cdot \|_{\Hdg}$, the Hodge metric, derived from the inner product given by $(\alpha, \beta) \mapsto \frac{i}{2} \int_C \alpha \wedge \overline{\beta}$ on the space of holomorphic $1$-forms on a compact connected Riemann surface $C$. 

As the only invertible holomorphic functions on $\mm_g$ are constants, the set of isomorphisms $\phi \colon \bb \isom \ll_p^{\otimes 8g+4}$ is a non-empty $\cc^\times$-torsor. Picking such an isomorphism $\phi$ allows us to define an $\rr$-valued function
\begin{equation} \label{def:beta}
 \beta_\phi = \log \left( \frac{ \| \cdot \|_\bb }{\phi^* \| \cdot \|_{\Hdg}} \right)
\end{equation}
on $\mm_g$. Here and below, the function log is taken to be the natural logarithm. Varying the isomorphism $\phi$ we obtain a class of functions on $\mm_g$ modulo constants, which is the $\beta$-invariant of Hain and Reed. 

Our main technical result is a result on the asymptotics of the $\beta$-invariant in one-parameter families of degenerating compact Riemann surfaces. We will work with a specific representative of the $\beta$-invariant, which is given by a certain archimedean analogue of the combinatorial $\lambda$-invariant that we saw earlier. The archimedean $\lambda$-invariant is also defined in \cite[Section~1.4]{zhgs}. It is shown in \cite[Theorem~1.4]{dj_second} that, for a suitable choice of the isomorphism $\phi$, one has the equality of functions $\beta_\phi = (8g+4) \lambda$ on $\mm_g$. 

\subsection{Height of the Ceresa cycle in the number field case}
We will not give the precise definition of the archimedean $\lambda$-invariant here as it is again a little involved, but instead we mention its connection to the computation of the height of the Ceresa cycle in the number field setting. The discussion here is completely analogous to the one in \S\ref{sec:height_Cer_FF} which deals with the function field setting.

Let $k$ be a number field, and write $S= \Spec O_k$ where $O_k$ is the ring of integers of~$k$. Let $\pi \colon \mathcal{X} \to S$ be a stable curve of genus $g \geq 2$ with smooth generic fiber. The generic fiber of $\mathcal{X}$ can be viewed as a smooth projective geometrically connected curve of genus $g$ over $k$. Using arithmetic intersection theory one  associates to $\pi$ the absolute height of the Ceresa cycle $c(\mathcal{X}/S)$ as well as the absolute height of the canonical Gross-Schoen cycle $\langle \Delta, \Delta \rangle(\mathcal{X}/S)$. 

Both heights are elements of $\rr$. By  \cite[Theorem~1.5.6]{zhgs} the two heights are related by the simple formula
\begin{equation} \label{ceresa_gs_NF}
c(\mathcal{X}/S) = \frac{2}{3} \langle \Delta, \Delta \rangle (\mathcal{X}/S) \, .
\end{equation}
For each closed point $s \in S$ we denote by $\overline{G}_s$ the polarized weighted graph determined by the geometric fiber of $\pi$ at $s$, and for each complex embedding $v$ of $k$ we denote by $\lambda(\mathcal{X}_v)$ the archimedean $\lambda$-invariant of the smooth projective complex curve $\mathcal{X}_v = \mathcal{X} \otimes_v \cc$. Let $M_0$ denote the set of closed points of $S$, and write $M_\infty$ for the set of complex embeddings of $k$. We write
\begin{equation}
 \lambda(\mathcal{X}/S) =  \sum_{s \in M_0} \lambda(\overline{G}_s) \log Ns+ \sum_{v \in M_\infty} \lambda(\mathcal{X}_v) \, ,
\end{equation}
where $Ns$ denotes the cardinality of the residue field at~$s$. 

We write $h(\mathcal{X}/S) = \widehat{\deg} \det \pi_* \omega_{\mathcal{X}/S}$ for the absolute Faltings height of $\pi$. Here the line bundle $\det \pi_* \omega_{\mathcal{X}/S}$ is to be viewed as carrying the Hodge metric from \S\ref{sec:HRbeta} at the complex embeddings of $k$, and the degree $\widehat{\deg} $ denotes the Arakelov degree. 
A combination of (\ref{ceresa_gs_NF}) and \cite[Equation~(1.4.2)]{zhgs} leads to the fundamental relation
\begin{equation} \label{comparison_NF}
h(\mathcal{X}/S)  = \frac{3g-3}{2g+1} c(\mathcal{X}/S) +   \lambda(\mathcal{X}/S) 
\end{equation}
between the absolute Faltings height, the absolute height of the Ceresa cycle and the $\lambda$-invariants at the non-archimedean and archimedean places of $k$.

\subsection{Asymptotics of the $\lambda$-invariant}

In the comparison result (\ref{comparison_NF}) between the absolute Faltings height and the absolute height of the Ceresa cycle the combinatorial $\lambda$-invariant and the archimedean $\lambda$-invariant are placed on equal footing. Given this, our main technical result Theorem~\ref{lambda_asympt_intro} does not come as a surprise: in a one-parameter family of compact Riemann surfaces with stable degeneration, \emph{the asymptotics of the archimedean $\lambda$-invariant is essentially controlled by the combinatorial $\lambda$-invariant of the special fiber.}

In order to give the precise statement we first introduce some notation, to be used throughout the paper.

\medskip

\noindent \textit{Notation:} We write $\dd $ for  the open unit disk in $\cc$, and write $\dd^* = \dd \setminus \{0 \}$. 
 When $f ,g \colon \dd^* \to \rr$ are two continuous functions, we write $f \sim g$ if the difference $f-g$ extends to a continuous function over $\dd$.

\medskip

We consider a stable curve  $\pi \colon \mathcal{X} \to \dd$ of genus $g \geq 2$ over $\dd$. We assume that $\pi$ is smooth over $\dd^*$. Let $\overline{G} = (G, \mathbf{q})$ be the dual graph of the fiber $\mathcal{X}_0$ at the origin, viewed as a polarized weighted graph where the lengths of the edges are determined by the ``thicknesses'' of the singular points of $\mathcal{X}_0$ on the surface~$\mathcal{X}$. Let $\lambda(\overline{G})$ be the $\lambda$-invariant of $\overline{G}$ as discussed in \S\ref{sec:lambda_graph}.

\begin{thm} \label{lambda_asympt_intro} Write $X=\pi^{-1}\dd^*$ and let $\varOmega(t)$ be the family of period matrices on $\dd^*$ determined by a symplectic framing of $R^1\pi_*\zz_X$.
One has the asymptotics
\begin{equation} \label{lambda_estimate}
\lambda(X_t) \sim -\lambda(\overline{G})\log|t| -\frac{1}{2} \log \det \Im \varOmega(t)
\end{equation}
as $t \to 0$ over $\dd^*$. 
\end{thm}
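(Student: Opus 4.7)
The plan is to prove the asymptotic formula by decomposing Zhang's archimedean $\lambda$-invariant into more elementary analytic constituents, analyzing the asymptotics of each constituent separately, and reassembling.

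A first observation isolates the term $-\tfrac{1}{2}\log\det\Im\varOmega(t)$. Since $\pi_*\omega_{\mathcal{X}/\dd}$ is locally free on $\dd$, we may pick a local holomorphic frame $\omega_1,\dots,\omega_g$ near $t=0$ and put $\sigma=(\omega_1\wedge\cdots\wedge\omega_g)^{\otimes(8g+4)}$, a local trivialization of $\ll_p^{\otimes(8g+4)}\cong\bb$. A direct period computation shows
\[
\log\|\omega_1\wedge\cdots\wedge\omega_g\|_\Hdg=\tfrac{1}{2}\log\det\Im\varOmega(t)+O(1),
\]
so combining with the identity $\beta_\phi=(8g+4)\lambda$ from \S\ref{sec:HRbeta} and the definition of $\beta_\phi$ in (\ref{def:beta}) reduces (\ref{lambda_estimate}) to the Hain--Reed-metric asymptotic
\[
\log\|\phi^{-1}\sigma\|_\bb\ \sim\ -(8g+4)\,\lambda(\overline{G})\,\log|t|.
\]

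To establish this I would invoke Zhang's formulae from \cite[\S1.4]{zhgs} expressing the $\lambda$-invariant, on both the archimedean and combinatorial sides in parallel, as a universal $\qq$-linear combination of more elementary invariants: Zhang's admissible-pairing invariant $\varphi$, Faltings' (or tropical) delta invariant, and the Arakelov (or combinatorial) self-intersection of the relative dualizing class. Plugging these decompositions into both sides of the target asymptotic reduces the theorem to establishing, for each constituent $I$, an asymptotic of the shape
\[
I(X_t)\ \sim\ -I(\overline{G})\,\log|t|+c_I\,\log\det\Im\varOmega(t),
\]
with constants $c_I$ whose Zhang-weighted combination yields the $-\tfrac{1}{2}\log\det\Im\varOmega(t)$ term in (\ref{lambda_estimate}). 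The required asymptotics for each constituent are accessible. For Faltings' $\delta$-invariant the expansion at the boundary of $\mm_g$ is the classical Jorgenson--Wentworth formula. For the Arakelov self-intersection $(\omega_{X_t},\omega_{X_t})_{\mathrm{Ar}}$ the required asymptotic is a consequence of Mumford's computation of the degree of the Deligne pairing on $\overline{\mm}_g$ together with the standard comparison between the Deligne pairing and Arakelov intersection. For Zhang's $\varphi$-invariant the analogous expansion is the subtlest ingredient, and is essentially what is carried out in earlier work of the first author: one writes $\varphi(X_t)$ as an integral of the Arakelov Green function against the canonical volume form and tracks the asymptotic behavior of each factor as $t\to 0$.

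The main technical obstacle is exactly this $\varphi$-asymptotic. It rests on a degeneration formula for the Arakelov Green function $g_{\mathrm{Ar}}(t;P,Q)$: as $t\to 0$ the Green function splits into a piece supported on the normalization of the central fiber, a tropical piece controlled by the resistance function on the dual graph $G$, and a Hodge-theoretic correction coming from the harmonic-$1$-form inner product on $H^1(X_t)$. Isolating the $\log|t|$-coefficient of the resulting integral and matching it combinatorially with $\varphi(\overline{G})$ --- so that, after applying Zhang's coefficients, one ultimately sees $\lambda(\overline{G})$ rather than $\varphi(\overline{G})$ --- is the delicate bookkeeping at the heart of the argument, and is where the interplay between the tropical and transcendental sides of the degeneration is most intricate.
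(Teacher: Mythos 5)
Your overall strategy---decompose $\lambda$ into constituent invariants, prove an asymptotic for each, and reassemble---matches the paper's in outline, and your opening reduction isolating the $-\tfrac{1}{2}\log\det\Im\varOmega(t)$ term via the Hodge metric and the identity $\beta_\phi = (8g+4)\lambda$ is fine (the paper in effect runs this in reverse, plugging (\ref{lambda_arch}) into the known asymptotics rather than first reducing to a statement about $\|\cdot\|_\bb$). But there is a genuine gap in how you propose to handle the $\varphi$-asymptotic, which is precisely the hard part. You describe it as ``essentially what is carried out in earlier work of the first author,'' attributing it to a degeneration formula for the Arakelov Green's function. That is not correct: the result that $\varphi(X_t) \sim -\varphi(\overline{\varGamma})\log|t|$ is \emph{new} in this paper (Theorem~\ref{phi_asympt}, obtained independently by Wilms in \cite{wi_deg}), and the paper deliberately does \emph{not} attack it by a direct Green's-function computation, which would be substantially more delicate than the $\delta_F$-analysis of \cite{dj_stable} you allude to.

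What the paper actually does is use Wilms' closed-form identity (\ref{eq:wilms}) on the archimedean side, and the parallel tropical identity (\ref{eq:djs}) derived from \cite{djs} and \cite{dj_NT_height}, to express $\varphi$ in both worlds as a linear combination of $\delta_F$ (resp.\ $\delta+\epsilon$) and Autissier's $I$-invariant $I(\Jac)$. This converts the $\varphi$-asymptotic into two inputs: the known $\delta_F$-asymptotic from \cite{dj_stable}, and a \emph{new} asymptotic for $I(\Jac X_t)$, which is the real technical content of the paper (Theorem~\ref{I-asymptotic}, proved in \S\ref{sec:Iinv_asymp} via a fairly intricate analysis of the Riemann theta function near the boundary of Siegel space, using the Nilpotent Orbit Theorem and a well-chosen fundamental domain built from Voronoi cells). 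Your proposal never introduces the $I$-invariant or its asymptotic and so never engages with this key step; as written, it would require carrying out a general stable-degeneration formula for the Arakelov Green's function which is not available in the literature and which the paper's architecture is specifically designed to circumvent. The remaining small inaccuracies---listing the Arakelov self-intersection of $\omega$ as a constituent of $\lambda$ when the definitions (\ref{lambda_arch}) and (\ref{lambda_nonarch}) use only $\varphi$ and $\delta_F$ (resp.\ $\delta+\epsilon$), and attributing the $\delta_F$-asymptotic to Jorgenson--Wentworth when the needed generality is \cite{dj_stable}---are secondary, but the missing $I$-invariant analysis is the load-bearing omission.
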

It follows from the non-negativity of the slope (see \S\ref{sec:mori}) that we have $\lambda(\overline{G})>0$ when $G$ is not a point. Thus, if $\mathcal{X}_0$ is not smooth  we have that $- \lambda(\overline{G}) \log|t| $ is the main term of the asymptotic (\ref{lambda_estimate}). Indeed, if $b$ is the first Betti number of $G$ then we have $\det \Im \varOmega(t) \sim c \cdot (-\log|t|)^b$ for a suitable real number $c>0$. This follows from Remark~\ref{main_term}.

Theorem~\ref{lambda_asympt_intro} will be proved in \S\ref{curves}. Our proof is based on a general result on the asymptotics of the so-called $I$-invariant for degenerating principally polarized complex abelian varieties, see Theorem~\ref{I-asymptotic}. The latter result was recently also obtained independently by R.\ Wilms \cite[Theorem~1.1]{wi_deg}.

\subsection{Analytic expression for the height of the Ceresa cycle} 
We will use Theorem~\ref{lambda_asympt_intro}  to prove Theorem~\ref{main:intro} in \S\ref{lambda_asympt_gives_jump}.
As a second application of Theorem~\ref{lambda_asympt_intro} we have the following analytic formula for the height of the Ceresa cycle in the case of a function field over $\cc$. 

Let $\overline{S}$ be a smooth projective connected complex curve, let $D$ be an effective reduced divisor on $\overline{S}$, and write $S = \overline{S} \setminus D$. Let $\pi \colon \mathcal{X} \to \overline{S}$ be a stable curve of genus $g \geq 2$, and assume that $\pi$ is smooth over $S$. The pull-back of the Hain-Reed line bundle $\bb$ along the moduli map $S \to \mm_g$ yields a smooth hermitian line bundle $\bb_S$ on $S$. By the work of Brosnan and Pearlstein as discussed in \S\ref{sec:BP_work}, or alternatively by  \cite[Corollary~2.13]{bghdj},  we have that the first Chern form $c_1(\bb_S)$ extends as a semi-positive $(1,1)$-current $[c_1(\bb_S)]$ over $\overline{S}$.

As discussed in \S\ref{sec:height_Cer_FF} we have associated to the stable curve $\pi \colon \mathcal{X} \to \overline{S}$ the height $c(\mathcal{X}/\overline{S})$ of the Ceresa cycle of its generic fiber. We recall that the height $c(\mathcal{X}/\overline{S})$ is a non-negative rational number.
Our next result shows that $c(\mathcal{X}/\overline{S})$ has a simple expression in terms of the semi-positive $(1,1)$-current $[c_1(\bb_S)]$. 
\begin{thm} \label{height_Ceresa_intro} Let $\pi \colon \mathcal{X} \to \overline{S}$ be a stable curve of genus $g \geq 2$, and assume that $\pi$ is  smooth over $S$. 
The equality
\begin{equation} \label{eqn:height_Ceresa}
12(g-1) \, c(\mathcal{X}/\overline{S}) = \int_{\overline{S}} \,\, [c_1(\bb_S)]
\end{equation}
holds. In particular, the height $c(\mathcal{X}/\overline{S})$ of the Ceresa cycle vanishes if and only if $c_1(\bb_S) \equiv 0$ on $S$.
\end{thm}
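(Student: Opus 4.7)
The plan is to express $\int_{\overline{S}}[c_1(\bb_S)]$ as the degree of a canonical $\RR$-line bundle extension of $\bb_S$ to $\overline{S}$, and to convert this into $12(g-1)\,c(\mathcal{X}/\overline{S})$ via the comparison identity (\ref{mod_height:intro}). The starting point is the relation $\beta_\phi = (8g+4)\lambda$ of \cite[Theorem~1.4]{dj_second}; after pullback along the moduli map $S \to \mm_g$, this yields on $S$ the identity
\begin{equation*}
\log\|\sigma\|_\bb = (8g+4)\log\|\tau\|_{\Hdg} + (8g+4)\lambda,
\end{equation*}
where $\tau$ is any local section of $\det\pi_*\omega_{\mathcal{X}/\overline{S}}$ and $\sigma = \phi(\tau^{\otimes 8g+4})$ is the corresponding section of $\bb_S$.

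The core step is a local asymptotic at each boundary point $s \in D$. Fix a uniformizer $t$ at $s$ and a symplectic framing of $R^1\pi_*\zz_X$ in a punctured neighborhood, giving rise simultaneously to the period matrix $\varOmega(t)$ and to a normalized basis $\alpha_1,\ldots,\alpha_g$ of the Hodge bundle satisfying $2\log\|\alpha_1 \wedge \cdots \wedge \alpha_g\|_{\Hdg} = \log\det\Im\varOmega(t)$. Taking $\tau = f\cdot\alpha_1\wedge\cdots\wedge\alpha_g$ with $f$ holomorphic and nowhere vanishing near $s$, one obtains $\log\|\tau\|_{\Hdg} = \log|f| + \tfrac{1}{2}\log\det\Im\varOmega(t)$. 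Substituting this together with Theorem~\ref{lambda_asympt_intro} into the identity above, the $\log\det\Im\varOmega(t)$ contributions cancel and we find
\begin{equation*}
\log\|\sigma\|_\bb \sim -(8g+4)\lambda(\overline{G}_s)\log|t|
\end{equation*}
near each $s \in D$. Consequently the Hain-Reed metric extends continuously to a hermitian metric on the $\RR$-line bundle
\begin{equation*}
\overline{\bb}_S := \left(\det\pi_*\omega_{\mathcal{X}/\overline{S}}\right)^{\otimes 8g+4} - (8g+4)\sum_{s \in D}\lambda(\overline{G}_s)\,[s]
\end{equation*}
on $\overline{S}$, the semi-positive current $[c_1(\bb_S)]$ equals its Chern current, and
\begin{equation*}
\int_{\overline{S}}[c_1(\bb_S)] = \deg\overline{\bb}_S = (8g+4)\bigl(h(\mathcal{X}/\overline{S}) - \lambda(\mathcal{X}/\overline{S})\bigr) = 12(g-1)\,c(\mathcal{X}/\overline{S}),
\end{equation*}
the last equality being (\ref{mod_height:intro}) rearranged.

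For the final claim, the semi-positivity of $[c_1(\bb_S)]$ yields $c(\mathcal{X}/\overline{S}) = 0 \Rightarrow c_1(\bb_S) \equiv 0$ on $S$ immediately. Conversely, if $c_1(\bb_S) \equiv 0$ on $S$ then the continuous psh local potential of the metric on $\overline{\bb}_S$ is harmonic on the punctured neighborhood of each $s\in D$ and bounded at $s$, hence extends harmonically across $s$ by the standard removable singularity theorem for bounded harmonic functions on a disk; thus $[c_1(\bb_S)]$ has no atomic mass on $D$ and vanishes identically on $\overline{S}$, giving $c(\mathcal{X}/\overline{S}) = 0$. The principal technical difficulty is verifying the precise cancellation of the $\log\det\Im\varOmega$ terms in the boundary asymptotic, which requires using one and the same symplectic framing of $R^1\pi_*\zz_X$ in both Theorem~\ref{lambda_asympt_intro} and the normalization of the Hodge basis $\{\alpha_i\}$.
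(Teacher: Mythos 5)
Your argument is correct and follows essentially the same route as the paper's proof in \S\ref{sec:height_Ceresa}: both combine the identity $\beta_\phi = (8g+4)\lambda$, the asymptotics of Theorem~\ref{lambda_asympt_intro}, Mumford's good-metric asymptotics for the Hodge bundle (Proposition~\ref{mumford}(iii)), and the comparison formula (\ref{mod_height:intro}). Your packaging of the Stokes computation as the degree of the continuously metrized $\RR$-line bundle $\overline{\bb}_S$ is a cosmetic repackaging of the paper's current-theoretic identity $(8g+4)[c_1(\ll_S)] = [c_1(\bb_S)] - (8g+4)\bigl[\tfrac{\partial\overline{\partial}}{\pi i}\lambda\bigr]$ together with the residue computation of $\int_{\overline{S}}\bigl[\tfrac{\partial\overline{\partial}}{\pi i}\lambda\bigr]$.
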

One situation in which the height $c(\mathcal{X}/\overline{S})$ of the Ceresa cycle vanishes is when the generic fiber of $\pi$ is \emph{hyperelliptic}. 
In fact, the restriction of the Hain-Reed line bundle $\bb$ to the hyperelliptic locus in $\mm_g$ is trivial as a hermitian line bundle. 

We will derive Theorem~\ref{height_Ceresa_intro} from Theorem~\ref{lambda_asympt_intro} in \S\ref{sec:height_Ceresa}.

\subsection{Overview of the paper} 

In \S\ref{pm_graphs} we review some basic notions and  results on polarized graphs. Among other things, we introduce here the slope of a connected polarized weighted graph.
In \S\ref{vanishing_slope} we give a complete classification of those connected polarized graphs (of genus at least two) that have a vanishing slope. This classification leads to the characterization given in Theorem~\ref{cor:vanishing}. 

In \S\ref{trop_moment} we discuss some preliminary material concerning tropical moments. 
In \S\ref{deg_ab_var} we review some basic results on families of degenerating polarized complex abelian varieties. 
In \S\ref{sec:Iinv_asymp} we introduce the $I$-invariant of a principally polarized complex abelian variety following \cite{aut}  and prove Theorem~\ref{I-asymptotic} on the asymptotics of the $I$-invariant in  families of degenerating principally polarized complex abelian varieties. 
In \S\ref{curves} we deduce from Theorem~\ref{I-asymptotic} our main technical result Theorem~\ref{lambda_asympt_intro} on the asymptotics of the $\lambda$-invariant.

In \S\ref{sec:hodge} we review some basics on the Hodge metric and its asymptotics that we need in order to deduce Theorems~\ref{main:intro} and~\ref{height_Ceresa_intro} from Theorem~\ref{lambda_asympt_intro}. 
In \S\ref{sec:height_Ceresa} we give our proof of Theorem~\ref{height_Ceresa_intro}. 
In \S\ref{ht_jump} we review  the notion of height jumps as introduced by Hain in the setting of the Ceresa cycle, and as further analyzed by Brosnan and Pearlstein in a much wider setting. 
Finally in  \S\ref{lambda_asympt_gives_jump} we give our proof of  Theorem~\ref{main:intro}.

\renewcommand*{\thethm}{\arabic{section}.\arabic{thm}}

\section{Graphs and slopes} \label{pm_graphs}

\subsection{Graphs and polarizations}

In this paper, a \emph{graph} consists of a finite non-empty vertex set $V$ and a finite (possibly empty) edge set $E$, with the usual incidence relations. Loops and multiple edges are allowed. 

When $G=(V,E)$ is a graph with vertex set $V$ and edge set $E$, a \emph{polarization} of $G$ is a function $\mathbf{q} \colon V \to \zz_{\geq 0}$ with the property that for each $p \in V$ the inequality $v(p)-2+2\mathbf{q}(p) \geq 0$ is satisfied. Here $v(p)$ is the \emph{valency} of $p$, i.e., the number of emanating half-edges at $p$.  For $p \in V$ one usually calls $\mathbf{q}(p)$ the \emph{genus} of the vertex $p$. 
Further,  a \emph{divisor} on $G$ is an element of $\zz^V$. The \emph{degree} of a divisor $D = \sum_{p \in V} n_p \cdot p$ is the integer $\deg D = \sum_{p \in V} n_p$. A divisor $D = \sum_{p \in V} n_p \cdot p$ is called \emph{effective} if for all $p \in V$ the inequality $n_p \geq 0$ is satisfied.

Given a polarized graph $(G,\mathbf{q})$, the associated \emph{canonical divisor} is the effective divisor $K \in \zz^V$ given by setting $K(p) = v(p)-2+2\mathbf{q}(p)$ for $p \in V$. When $G$ is connected, the integer
\begin{equation} \label{eq:genus}
\overline{g}(G,\mathbf{q}) = \frac{1}{2}(\deg K +2 ) = b_1(G) + \sum_{p \in V} \mathbf{q}(p) 
\end{equation}
is called the \emph{genus} of $(G,\mathbf{q})$. Here $b_1(G) \in \zz_{\geq0}$ is the first Betti number of $G$. Note that the genus of a connected polarized graph is an element of $\zz_{>0}$.

\begin{definition}\label{def:stable}
A connected polarized graph $(G,\mathbf{q})$ is called {\em stable} if its canonical divisor is strictly positive.  
\end{definition}
It follows from the definition of the canonical divisor that, in a stable graph $(G,\mathbf{q})$, the polarization function $\mathbf{q}$ is strictly positive on all vertices of valency $1$ or $2$.  Further, it follows from \eqref{eq:genus} that a stable graph has genus at least $2$.

A {\em bridge} in a graph is an edge which, when removed, increases the number of connected components.
Let $G$ be a connected graph. An edge $e \in E$ is called \emph{of type $0$} if  it is not a bridge.
Let $\mathbf{q} \colon V \to \zz_{\geq 0}$ be a polarization of $G$ and let $e \in E$ be an edge. Write $g=\overline{g}(G,\mathbf{q})$.
 If the removal of a bridge $e $ from $G$ results in the disjoint union of two connected graphs, say $G_1$ and $G_2$ with polarizations $\mathbf{q}_1$ and $\mathbf{q}_2$, then it is easy to see that $g = \overline{g}(G_1,\mathbf{q}_1) + \overline{g}(G_2,\mathbf{q}_2)$. We call the bridge edge $e$ \emph{of type $h$}, where $h \in \{ 1, \ldots, [g/2] \}$, if one of the two components $G_1, G_2$ has genus~$h$ and the other has genus~$g-h$.

A \emph{length} on a graph $G=(V,E)$ is a map $m \colon E \to \rr_{>0}$. A graph equipped with a length function is called a \emph{weighted graph}. 
We will sometimes use the letter $G$ to refer to weighted graphs, if the edge lengths are given or clear from the context.

Let $\overline{G}=(G,\mathbf{q})$ be a connected polarized weighted graph with vertex set $V$ and edge set $E$. Let $g=\overline{g}(G,\mathbf{q})$ be its genus, and let $\delta(G)=\sum_{e \in E} m(e)$ denote the total length of the edges of $G$.  The total length of the edges of $G$ of type~$0$ is denoted by $\delta_0(G)$, and for $h \in \{ 1, \ldots, [g/2] \}$ the total length of the edges of $G$ of type~$h$ is denoted $\delta_h(\overline{G})$. We  have $\delta(G) = \delta_0(G)+ \sum_{h=1}^{[g/2]} \delta_h(\overline{G})$.

For the notion of \emph{(polarized) metric graph} that we shall employ we refer to either \cite[\S\S3 and~4]{ci}, \cite[Appendix]{zhadm}, or \cite[\S4]{zhgs}. A weighted graph $G$ naturally gives rise to a metric graph $\varGamma$ with a designated vertex set $V$, and a polarized weighted graph $\overline{G}=(G,\mathbf{q})$ naturally gives rise to a polarized metric graph $\overline{\varGamma}=(\varGamma,\mathbf{q})$, that is, a metric graph $\varGamma$ with designated vertex set $V$ and polarization $\mathbf{q} \colon V \to \zz_{\geq 0}$. 

The notions of canonical divisor and genus readily generalize to the setting of polarized metric graphs. Moreover, when $\overline{\varGamma}=(\varGamma,\mathbf{q})$ is a connected polarized metric graph, one naturally has its associated volume $\delta(\varGamma)$ as well as the invariants $\delta_0(\varGamma)$ and $\delta_h(\overline{\varGamma})$ for $h=1,\ldots,[g/2]$, where $g=\overline{g}(\overline{\varGamma})$ is the genus of $\overline{\varGamma}$.

\subsection{Minimal models}
Let $\overline{\varGamma} = (\varGamma,\mathbf{q})$ be a polarized metric graph. By a {\em model} of $\overline{\varGamma}$ we mean a polarized weighted graph $\overline{G}=(G,\mathbf{q})$ giving rise to $\overline{\varGamma}$. 

\begin{definition} \label{def:minmodel}
Let $\overline{\varGamma}=(\varGamma,\mathbf{q})$ be a connected polarized metric graph with canonical divisor $K$. Assume the genus of $\overline{\varGamma}$ is at least two.
The {\em minimal model} of $\overline{\varGamma}$ is the polarized weighted graph whose vertex set $V$ equals the support of $K$, and whose polarization is the restriction of $\mathbf{q}$ to $V$.
\end{definition}
In Definition~\ref{def:minmodel}, since the genus of $\overline{\varGamma}$ is assumed to be at least two, $V$ is non-empty. 
By definition $\mathbf{q}(p) = 0$ for all $p \in \varGamma \backslash V$ and, therefore, the minimal model has the same genus as $\overline{\varGamma}$.
Moreover, by construction, the minimal model of $\overline{\varGamma}$ is always a {\em stable} polarized graph (see Definition \ref{def:stable}).

\subsection{$2$-connected graphs and the ear decomposition}\label{sec:ear}

A metric graph is called {\em $2$-connected} if it is topologically $2$-connected; it cannot be disconnected by deleting a single point. A weighted graph is called {\em $2$-connected} if its associated metric graph is $2$-connected. Note that a weighted graph with two vertices connected by a single edge (i.e. an `edge segment') is not considered $2$-connected with our definition.

{\em Whitney's theorem} states that a graph $G$ is $2$-connected if and only if it has an {\em open ear decomposition} (see \cite[\S3.1]{Diestel}): there exists graphs $G_0, \ldots, G_k$ such that 
\begin{itemize}[leftmargin=*]
\item $G_0$ is a cycle in $G$, 
\item $G_k = G$, 
\item  for $1 \leq i \leq k$, the graph $G_i$ is obtained from $G_{i-1}$ by adding a path with both ends in $G_{i-1}$ and otherwise disjoint from $G_{i-1}$. 
\end{itemize}

\subsection{The $j$-function and effective resistance}
Let $\varGamma$ be a connected metric graph and fix two points $y,z \in \varGamma$. We recall that  $j_z^{\varGamma}(\cdot \, , y)$ denotes the unique continuous piecewise affine real valued function on $\varGamma$ satisfying:
\begin{itemize}
	\item[(i)] $\Delta \left(j_z^{\varGamma}(\cdot \, , y)\right )= \delta_y - \delta_z$, 
	\item[(ii)] $j_z^{\varGamma}(z,y) = 0$.
\end{itemize}
Here $\Delta$ is the Laplacian operator in the sense of distributions. 

The following result is well-known (see, e.g., \cite[Lemma 2.17]{bf_trop}).
\begin{lem} \label{lem:j_2conn}
A metric graph $\varGamma$ is $2$-connected if and only if $j_z^{\varGamma}(x, y) > 0$ for all $x,y,z \in \varGamma$ with $z \not\in \{x,y\}$.
\end{lem}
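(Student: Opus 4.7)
The plan is to interpret $f := j_z^{\varGamma}(\cdot,y)$ as the voltage on $\varGamma$ when a unit current is injected at $y$ and extracted at $z$, and to combine two standard ingredients: the minimum principle for harmonic (piecewise affine with vanishing distributional Laplacian) functions on a metric graph, and the divergence theorem $\int \Delta f = 0$ applied to $f$ restricted to a compact sub-metric-graph. As a preliminary step I would verify that $f \geq 0$ on $\varGamma$ and $f(y) > 0$: the positivity at $y$ follows from a slope count, since $\Delta f(y) = +1$ is incompatible with $y$ being a global minimum of $f$; and on the open set $U = \{f<0\}$, which excludes both $y$ and $z$, the function $f$ is harmonic with vanishing boundary values yet strictly negative inside, violating the minimum principle, so $U = \varnothing$.

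For the forward direction, assume $\varGamma$ is $2$-connected and pick any $x,y,z$ with $z \notin \{x,y\}$. Since $\varGamma \setminus \{z\}$ is connected, there exists a path $\gamma$ from $y$ to $x$ avoiding $z$. Along $\gamma$ the function $f$ is nonnegative and starts at $f(y)>0$; if $f(x)=0$, let $p_0$ be the first zero of $f$ along $\gamma$. Then $p_0 \neq y,z$, so $f$ is harmonic in a connected open neighborhood of $p_0$ and attains a local minimum of $0$ there, and the strong minimum principle forces $f$ to vanish identically on that neighborhood, contradicting $f>0$ immediately before $p_0$ along $\gamma$.

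For the reverse direction, suppose $\varGamma$ is not $2$-connected and choose a cut point $z$. Let $C$ be the component of $\varGamma \setminus \{z\}$ containing $y$, and set $D := \varGamma \setminus C$, a compact connected sub-metric-graph meeting $C$ only at $z$. Away from $z$, the restriction $f|_D$ has vanishing Laplacian since $y \notin D$, and the divergence theorem applied to $f|_D$ on the compact graph $D$ forces the Laplacian of this restriction to vanish at $z$ as well; equivalently, the outward slopes of $f$ at $z$ along edges in $D$ must sum to zero. Hence $f|_D$ is harmonic on the compact connected metric graph $D$, so constant, and $f(z)=0$ forces $f \equiv 0$ on $D$; any $x \in D \setminus \{z\}$ then satisfies $j_z^{\varGamma}(x,y)=0$. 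The subtlest point, I expect, will be justifying this flux-balance step cleanly: the edges at the cut point $z$ split into those entering $C$ and those entering $D$, and one must argue that the absence of a source of $f$ inside $D$ kills the net flux through the edges of $D$ at $z$ on its own, so that $f|_D$ is genuinely harmonic at $z$ when viewed as a function on the sub-metric-graph $D$ alone; once this is in hand, the maximum principle delivers the conclusion.
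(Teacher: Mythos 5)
The paper does not prove this lemma itself; it is quoted as well-known with a reference to Baker--Faber. Your proof via the minimum principle and a flux-balance argument at the cut point is the standard potential-theoretic route, and the key observation for the reverse direction --- that the divergence theorem $\int_D \Delta(f|_D)=0$ on the compact sub-metric-graph $D$, combined with harmonicity of $f|_D$ away from $z$, forces $\Delta(f|_D)(z)=0$, hence $f|_D$ constant and equal to $f(z)=0$ --- is correct and is indeed the crux. The forward direction via the first zero along a path avoiding $z$ and the strong minimum principle is also sound.

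There is a small circularity in the preliminary step as you have ordered it. You deduce $f(y)>0$ from ``$\Delta f(y)=+1$ is incompatible with $y$ being a global minimum,'' but that only shows $y$ is not a global minimum of $f$; to pass to $f(y)>0$ you need to already know $\min_\varGamma f = 0$, i.e.\ $f\geq 0$. Yet your argument for $f\geq 0$ (via $U=\{f<0\}$) presupposes $y\notin U$, i.e.\ $f(y)\geq 0$. The fix is to establish $f\geq 0$ first, directly: by compactness the minimum of $f$ is attained on a non-empty closed set $S$; the slope count rules out $y\in S$; at any $p\in S$ with $p\neq y,z$ the function $f$ is harmonic with a local minimum, so the strong minimum principle makes $f$ constant near $p$ and hence $S$ open near $p$; if $z\notin S$ then $S$ is open, closed, non-empty, so $S=\varGamma$, contradicting $y\notin S$. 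Therefore $z\in S$, giving $\min f = f(z)=0$, hence $f\geq 0$, and $f(y)>0$ because $y\notin S$. With this reordering, the rest of your argument goes through as written.
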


The {\em effective resistance} between two points $x,y \in \varGamma$ is defined as $r(x,y) = j^{\varGamma}_y(x,x)$. The effective resistance function $r \colon \varGamma \times \varGamma \to \rr$  is a distance function on $\varGamma$ (see, e.g., \cite[\S4.2]{djs_cross}).

The {\em Foster coefficient} of an edge segment $e$ with endpoints $e^+,e^- \in \varGamma$, is $\mathsf{F}(e) = 1-r(e^+,e^-)/m(e)$. It is well-known that $\mathsf{F}(e) \geq 0$. Moreover $\mathsf{F}(e) = 0$ if and only if $e$ is a bridge (see, e.g., \cite[\S7.4]{djs_cross}).

\subsection{Invariants of polarized graphs}

Let $\overline{\varGamma}=(\varGamma,\mathbf{q})$ be a connected polarized metric graph. We refer to \cite{zhadm} for the definition of the \emph{admissible measure} $\mu$ on $\varGamma$ associated to the polarization $\mathbf{q}$, and the \emph{admissible Green's function} $g_\mu \colon \varGamma \times \varGamma \to \rr$. 
We will be interested in the following invariants of $\overline{\varGamma}$, all introduced by Zhang \cites{zhgs, zhadm}. Let $K$ denote the canonical divisor of $\overline{\varGamma}$, and let $g=\overline{g}(\overline{\varGamma})$ denote its genus. 

\begin{itemize}[leftmargin=*]
\item The $\varphi$-invariant, given by
\begin{equation} \label{phi_nonarch} \varphi(\overline{\varGamma}) = -\frac{1}{4}\delta(\varGamma) + \frac{1}{4} \int_\varGamma g_\mu(x,x) \left((10g+2) \mu(x) - \delta_K(x)\right) \, .  
\end{equation}
\item The $\epsilon$-invariant, given by
\begin{equation} \label{eps_invariant}
\epsilon(\overline{\varGamma}) = \int_\varGamma g_\mu(x,x)\left((2g-2) \mu(x) + \delta_K(x)\right) \, .  
\end{equation} 
\item The $\lambda$-invariant, given by
\begin{equation} \label{lambda_nonarch}
\lambda(\overline{\varGamma}) =  \frac{g-1}{6(2g+1)} \varphi(\overline{\varGamma}) + \frac{1}{12}\left( \delta(\varGamma) + \epsilon(\overline{\varGamma})\right)\, . 
\end{equation}
\end{itemize}

\medskip

From the above definitions, one naturally has the invariants $\varphi(\overline{G})$, $\epsilon(\overline{G})$, and $\lambda(\overline{G})$ for polarized weighted graphs $\overline{G}=(G,\mathbf{q})$ as well. Note that these are invariants of the underlying polarized metric graph.

The following two examples follow easily from \cite[Proposition~4.9]{ci}.
\begin{example} \label{ex:loop} Assume that $\overline{\varGamma}$ is a loop graph of genus $g$ based on a single vertex. Then we have $(8g+4) \lambda(\overline{\varGamma}) = g \, \delta(\varGamma)$. 
\end{example}
\begin{example} \label{ex:segment} Assume that $\overline{\varGamma}$ is an edge segment  with endpoints of genera $h$ and $g-h$.  Then we have $(8g+4) \lambda(\overline{\varGamma}) = 4h(g-h) \, \delta(\varGamma)$. 
\end{example}

\subsection{Slopes of polarized graphs}
\begin{definition}
The \emph{slope} of a polarized metric graph $\overline{\varGamma}$ is given by
\begin{equation} \label{slope}
  s(\overline{\varGamma}) = (8g+4) \, \lambda(\overline{\varGamma}) - g \, \delta_0( \varGamma) - \sum_{h=1}^{[g/2]} 4h(g-h) \, \delta_h(\overline{\varGamma}) \, . 
 \end{equation}
\end{definition}
We also define the slope of a polarized weighted graph $\overline{G}=(G,\mathbf{q})$, denoted by $s(\overline{G})$, as the slope of the polarized metric graph modeled by $\overline{G}$. 

\begin{example} \label{ex:bridgelessSlopes}
Let $\overline{\varGamma} = (\varGamma,\mathbf{q})$ be a {\em bridgeless} connected polarized metric graph. Let $r \colon \varGamma \times \varGamma \to \rr$ denote its effective resistance function. Let $\overline{G}$ be any polarized weighted graph, with vertex set $V$ and edge set $E$, which is a model of $\overline{\varGamma}$.
For a vertex $p \in V$, we define
\[
\swt(p) = \sum_{q \in V}  r(p,q) \, \mathbf{q}(q)  
  + \sum_{e = \{e^+, \, e^-\} \in E} j^{\varGamma \setminus e}_p(e^+,e^-) \, \mathsf{F}(e) \, .
\]

From \cite[Proposition~4.6 and Proposition~4.15]{ci} we obtain an explicit formula for the slope of $\overline{\varGamma}$ as the $\swt$-weighted degree of the canonical divisor:
\[
s(\overline{\varGamma})   = \sum_{p\in V} \left(v(p)-2+2\mathbf{q}(p) \right) \swt(p) \, .
\]
It follows immediately that for all bridgeless polarized metric graphs $\overline{\varGamma}$ one has $s(\overline{\varGamma}) \geq 0$. 
\end{example}

\begin{example}  \label{lambda_for_two_gon} Let $\overline{\varGamma}$ be a polarized metric graph of genus $g$ consisting of two vertices of genera $h$ and $g-h-1$ and joined by two edges of lengths $m_1, m_2$.  Using Example \ref{ex:bridgelessSlopes}, we compute
\[  s(\overline{\varGamma}) = \frac{4m_1m_2}{m_1+m_2} (g-h-1)h   \, . \]
\end{example}
 
\subsection{Block-tree decomposition of graphs}\label{sec:block tree}
A {\em separation} of a connected graph $G$ is a decomposition into two connected subgraphs having a unique common vertex $v$ and disjoint nonempty edge sets. The common vertex $v$ is called a {\em separating vertex} of $G$.
A connected graph is called {\em inseparable} if it does not have a separating vertex.
A {\em block} of a connected graph $G$ is a maximal inseparable subgraph. If the edge set of $G$ is non-empty, then each block of $G$ is either a loop, a
bridge, or a maximal $2$-connected subgraph without loops. 

Every graph is a union of its blocks. Any two distinct blocks of a graph have at most one common vertex. 
Let $\mathcal{B}= \{H_i\}$ denote the collection of blocks of a connected graph $G$. Let $\mathcal{C}$ be the set of separating vertices of $G$. Define a bipartite graph $\mathcal{H}$ with the vertex set $\mathcal{B} \cup \mathcal{C}$ by connecting $H_i \in \mathcal{B}$ to $v_i \in \mathcal{C}$ if $v_i \in H_i$. Then $\mathcal{H}$ is a tree.
This is the famous {\em block-tree decomposition theorem} (see, e.g., \cite[\S3.1]{Diestel}).

Let $\overline{G}=(G,\mathbf{q})$ be a connected polarized weighted graph. Each block $H_i$ of $G$ is endowed with a natural structure of a polarized weighted graph $\overline{H_i}$, with the induced polarization $\mathbf{q}_i$ obtained from $\mathbf{q}$ by pushforward along the natural projection $G \to H_i$. With this induced polarization, each $\overline{H_i}$ has the same genus as $\overline{G}$.

\begin{lem}
Every block $H_i$ of a {\em stable} polarized graph $G$ is again a {\em stable} polarized graph.
\end{lem}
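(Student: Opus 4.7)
Fix a block $H_i$ of $\overline{G}$ and a vertex $p \in V(H_i)$; the plan is to establish $v_{H_i}(p) - 2 + 2\mathbf{q}_i(p) > 0$ by making the induced polarization $\mathbf{q}_i$ explicit via the block-tree and then applying stability of $\overline{G}$ together with a handshake count.

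First I would identify $\mathbf{q}_i(p)$ combinatorially using \S\ref{sec:block tree}. Removing $H_i$ from the block-tree disconnects it into subtrees, each attached to $H_i$ through a unique separating vertex of $\overline{G}$ lying in $V(H_i)$. For each such separating vertex $p$ let $G_p \subset G$ denote the union of all blocks of $G$ appearing in the subtree attached at $p$; if $p$ is not a separating vertex of $\overline{G}$, set $G_p = \{p\}$. Then $G_p$ is a connected subgraph of $G$ containing $p$, and the retraction $G \to H_i$ acts by contracting each $G_p$ down to its vertex $p$. Writing $V'_p = V(G_p) \setminus \{p\}$, the only pushforward prescription compatible with the identity $\overline{g}(\overline{H_i}) = \overline{g}(\overline{G})$ asserted just before the lemma (using the block-additivity $b_1(G) = b_1(H_i) + \sum_p b_1(G_p)$) is
\[
\mathbf{q}_i(p) \;=\; \mathbf{q}(p) + \sum_{q \in V'_p} \mathbf{q}(q) + b_1(G_p).
\]

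Next I would produce a short combinatorial estimate on $G_p$. For every $q \in V'_p$ all edges of $G$ incident to $q$ lie in $G_p$ (since $q$ belongs to no block outside the $p$-subtree), hence $v_{G_p}(q) = v_G(q)$; stability of $\overline{G}$ at $q$ therefore yields $v_{G_p}(q) + 2\mathbf{q}(q) \geq 3$. Summing over $q \in V'_p$ and invoking the handshake identity $\sum_{q \in V(G_p)} v_{G_p}(q) = 2|E(G_p)| = 2\bigl(b_1(G_p) + |V'_p|\bigr)$ rearranges into
\[
2 b_1(G_p) + 2\sum_{q \in V'_p} \mathbf{q}(q) \;\geq\; v_{G_p}(p) + |V'_p|,
\]
which remains valid (and trivial) in the corner case $V'_p = \emptyset$.

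Finally, I would add $v_{H_i}(p) + 2\mathbf{q}(p)$ to both sides. The left-hand side collapses, by the formula of the first step, to $v_{H_i}(p) + 2\mathbf{q}_i(p)$. The right-hand side, using $v_G(p) = v_{H_i}(p) + v_{G_p}(p)$, becomes $v_G(p) + 2\mathbf{q}(p) + |V'_p|$, which is at least $3$ by stability of $\overline{G}$ at $p$. Hence $v_{H_i}(p) - 2 + 2\mathbf{q}_i(p) \geq 1 + |V'_p| \geq 1 > 0$, giving the stability of $\overline{H_i}$. The main obstacle is really the bookkeeping in the first step: one must correctly account for the $b_1(G_p)$ summand contributed by the contracted cycles to $\mathbf{q}_i(p)$, and identify precisely which vertices retract to which separating $p$. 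Once that formula is in hand the inequality reduces to stability of $\overline{G}$ at each vertex absorbed into $p$, together with Euler's formula on the connected subgraph $G_p$.
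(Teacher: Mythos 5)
Your proof is correct, and it follows essentially the same strategy as the paper's: identify the pushforward polarization $\mathbf{q}_i(p)$ at a separating vertex $p$ as the genus of the hanging subgraph $G_p$, and combine this with stability of $\overline{G}$. The only difference is that where the paper simply asserts ``this genus is at least one'' without elaboration, you supply the justification explicitly via the handshake identity on $G_p$ (and in fact derive the slightly stronger bound $v_{H_i}(p)-2+2\mathbf{q}_i(p)\geq 1+|V'_p|$), which also uniformly absorbs the case where $p$ is not a separating vertex.
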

\begin{proof}
Consider the block decomposition of $G$ into its blocks $H_i$. Let $p$ be a vertex of $H_i$.

Assume $p$ is a non-separating vertex of $H_i$. The value of the canonical divisor of $H_i$ at $p$ is positive. This is because, in passing from $G$ to $H_i$, both the valency and the genus at $p$ are unchanged. 

Now assume $p$ is a separating vertex of $H_i$. The valency at $p$ in $H_i$ is at least~$1$ by connectedness. The value of the induced polarization $\mathbf{q}_i$ at $p$ is replaced by the genus of the entire subgraph of $G$ that touches the block $H_i$ at $p$. This genus is at least one. It follows that the value of the canonical divisor of $H_i$ at $p$ is again positive.
\end{proof}

The notions of blocks and block decomposition naturally extend to connected metric graphs. Let $\overline{\varGamma} = (\varGamma,\mathbf{q})$ be a connected polarized metric graph. By \cite[Theorem~4.3.2]{zhgs}, each of the invariants $ \varphi(\overline{\varGamma})$, $\epsilon(\overline{\varGamma})$ and $\lambda(\overline{\varGamma})$ is additive in the blocks of $\varGamma$. We conclude that also the slope $s(\overline{\varGamma}) $ is additive in the blocks of $\varGamma$. 

It follows from Example~\ref{ex:segment} that the slope of an edge segment is always zero. Therefore, if $\{\overline{\varGamma}_\alpha\}$ denotes the collection of 2-connected blocks of $\overline{\varGamma}$ (with their induced pushedforward polarizations), we have
\begin{equation} \label{additivity} 
s(\overline{\varGamma}) = \sum_\alpha s(\overline{\varGamma}_\alpha) \, .
\end{equation}

The following result follows immediately.
 \begin{prop} \label{prop:additivity} We have $s(\overline{\varGamma})= 0$ if and only if we have $s(\overline{\varGamma}_\alpha)=0$ for all $2$-connected blocks $\overline{\varGamma}_\alpha$ of $\overline{\varGamma}$.
 \end{prop}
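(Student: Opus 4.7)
The plan is to show both directions via the additivity formula (2.17) combined with the non-negativity statement from Example~\ref{ex:bridgelessSlopes}. The ``if'' direction is immediate from (\ref{additivity}): if $s(\overline{\varGamma}_\alpha) = 0$ for every $2$-connected block $\overline{\varGamma}_\alpha$, then the right-hand side of (\ref{additivity}) vanishes and so does $s(\overline{\varGamma})$. All the real content is in the ``only if'' direction.

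For the converse, I would argue that each $s(\overline{\varGamma}_\alpha) \geq 0$ individually, after which a sum of non-negative numbers being zero forces each term to vanish. To get the pointwise inequality, observe that every $2$-connected block $\overline{\varGamma}_\alpha$ (equipped with the induced pushforward polarization) is in particular a connected bridgeless polarized metric graph, since $2$-connectivity rules out the existence of cut points and hence of bridges. Therefore Example~\ref{ex:bridgelessSlopes} applies to $\overline{\varGamma}_\alpha$ and yields $s(\overline{\varGamma}_\alpha) \geq 0$.

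Combining the additivity identity
\[
s(\overline{\varGamma}) = \sum_\alpha s(\overline{\varGamma}_\alpha)
\]
with the pointwise inequalities $s(\overline{\varGamma}_\alpha) \geq 0$, the equality $s(\overline{\varGamma})=0$ forces $s(\overline{\varGamma}_\alpha)=0$ for every $\alpha$, which concludes the proof.

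I do not expect any real obstacle here: the proposition is essentially a formal consequence of two facts that are already in place, namely the block-additivity (\ref{additivity}) of the slope and the non-negativity of the slope for bridgeless polarized metric graphs via the explicit $\swt$-weighted formula of Example~\ref{ex:bridgelessSlopes}. The only subtle point to be careful about is making sure that the induced pushforward polarization $\mathbf{q}_\alpha$ on each block $\overline{\varGamma}_\alpha$ preserves the genus and is the polarization with respect to which slopes are computed in the additivity formula, but this is exactly how the block decomposition of polarized graphs is set up in \S\ref{sec:block tree}.
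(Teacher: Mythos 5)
Your proposal is correct and is essentially the argument the paper has in mind: the paper labels this proposition as "follows immediately" after establishing the additivity identity $s(\overline{\varGamma}) = \sum_\alpha s(\overline{\varGamma}_\alpha)$, but the unspoken ingredient needed for the ``only if'' direction is exactly what you supply, namely the non-negativity $s(\overline{\varGamma}_\alpha) \geq 0$ from Example~\ref{ex:bridgelessSlopes} applied to each $2$-connected (hence bridgeless) block. You have correctly identified and filled in that implicit step.
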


\section{Vanishing slope} \label{vanishing_slope}

In this section, we give a complete classification of connected polarized metric graphs (of genus at least two) with vanishing slopes. Our strategy is to use the block-tree decomposition theorem, and reduce the classification problem to the study of blocks.

\begin{thm} \label{thm:slopezero_blocks}
Let $\overline{\varGamma} = (\varGamma,\mathbf{q})$ be a {\em bridgeless} connected polarized metric graph. Assume the genus of $\overline{\varGamma}$ is at least two and let $\overline{G} = (G,\mathbf{q})$ be its {\em minimal model}. Let $V$ be the vertex set of $\overline{G}$.
\begin{itemize}
\item[(a)] If $|V| = 1$ then $s(\overline{\varGamma}) = 0$.
\item[(b)] Assume $|V| \geq 2$. If $\mathbf{q} \not \equiv 0$ then $s(\overline{\varGamma}) >0$.
\item[(c)] Assume $G$ is $2$-connected. If $|V| \geq 3$ then $s(\overline{\varGamma}) >0$. 
\end{itemize}
\end{thm}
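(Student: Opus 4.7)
The proof proceeds part by part, driven by the closed-form expression for the slope of a bridgeless polarized metric graph recorded in Example~\ref{ex:bridgelessSlopes}:
\[
s(\overline{\varGamma}) = \sum_{p \in V} K(p) \, \swt(p), \qquad K(p) = v(p) - 2 + 2\mathbf{q}(p),
\]
where $\swt(p)$ is a sum of non-negative terms built from $r(p,q)\mathbf{q}(q)$ and $j^{\varGamma\setminus e}_p(e^+,e^-) \mathsf{F}(e)$. Since $\overline{G}$ is the minimal model of $\overline{\varGamma}$, it is stable, so $K(p) > 0$ for every $p \in V$; and since $\overline{\varGamma}$ is bridgeless, every Foster coefficient $\mathsf{F}(e)$ is strictly positive. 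Thus the problem reduces to a combinatorial question of deciding when every summand of $\swt(p)$ vanishes versus when some summand is strictly positive.

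Part (a) is a direct computation: the sole vertex $p$ kills the resistance sum (since $r(p,p)=0$), and every edge is a loop at $p$, so $j^{\varGamma\setminus e}_p(p,p) = r^{\varGamma\setminus e}(p,p) = 0$ trivializes the Foster-coefficient sum. Hence $\swt(p) = 0$ and $s(\overline{\varGamma}) = 0$. Part (b) is also short: pick $q \in V$ with $\mathbf{q}(q) > 0$ and any $p \in V \setminus \{q\}$, which exists since $|V| \geq 2$. Distinctness gives $r(p,q) > 0$, contributing a strictly positive summand to $\swt(p)$, so $s(\overline{\varGamma}) > 0$.

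For part (c), I would first reduce to the case $\mathbf{q} \equiv 0$, since otherwise part (b) applies; stability then forces $v(p) \geq 3$ for every $p \in V$. The goal is to produce $p \in V$ and $e \in E$ with $j^{\varGamma\setminus e}_p(e^+,e^-) > 0$, i.e., an edge whose endpoints are still connected in $\varGamma \setminus e$ by a path avoiding $p$. If $G$ contains a loop at some vertex $q$, any $p \in V \setminus \{q\}$ works, since bridgelessness makes $\varGamma$ minus the loop connected and forces $r^{\varGamma \setminus \mathrm{loop}}(p, q) > 0$. Otherwise $G$ has no loops, and the crux is the following claim: some $p \in V$ satisfies $b_1(G \setminus p) \geq 1$. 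Arguing by contradiction, if $G \setminus p$ were a tree for every $p \in V$, then using 2-connectivity and the absence of loops the identity $b_1(G \setminus p) = b_1(G) + 1 - v(p)$ would force $v(p) = b_1(G) + 1$ for every $p$; summing valencies and combining with $b_1(G) = |E| - |V| + 1$ and the hypothesis $|V| \geq 3$, algebraic manipulation yields $b_1(G) = 1$, contradicting $b_1(G) = g \geq 2$ under $\mathbf{q} \equiv 0$. Given the claim, any edge $e$ lying in a cycle of $G \setminus p$ yields a path from $e^+$ to $e^-$ in $\varGamma \setminus e$ avoiding $p$, whence $j^{\varGamma\setminus e}_p(e^+,e^-) > 0$ and $s(\overline{\varGamma}) > 0$.

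The main obstacle is the combinatorial claim in part (c), namely producing a vertex of a $2$-connected graph whose deletion leaves a graph still containing a cycle; the rest of the argument is a clean bookkeeping exercise on the bridgeless-slope formula.
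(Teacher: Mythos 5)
Your proof of parts (a) and (b) coincides with the paper's in substance; the difference in (a) (you route through $r^{\varGamma\setminus e}(p,p)=0$ rather than reading $j_p^{\varGamma\setminus e}(p,p)=0$ off the definition) is cosmetic.

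For part (c) you take a genuinely different route that is also correct. The paper chooses the edge first: by the ear decomposition (Whitney), since $v(p)\geq 3$ the last ear must be a single edge $e$, so $G\setminus e$ is still $2$-connected, and then \emph{any} third vertex $p\notin\{e^+,e^-\}$ gives $j^{\varGamma\setminus e}_p(e^+,e^-)>0$ directly by Lemma~\ref{lem:j_2conn} applied to $\varGamma\setminus e$. You instead choose the vertex first: an Euler-characteristic count (using $b_1(G\setminus p)=b_1(G)+1-v(p)$ for $2$-connected loopless $G$, the handshake lemma, and $|V|\geq 3$) shows that if every vertex deletion left a tree then $b_1(G)=1$, contradicting $g\geq 2$; so some $p$ leaves a cycle, and any edge on that cycle gives the needed positivity. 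Your argument thus replaces the appeal to Whitney's theorem with elementary counting, which some readers may prefer, but it does lean on a pointwise version of Lemma~\ref{lem:j_2conn} (namely that $j_z^{\varGamma'}(x,y)>0$ whenever $x,y$ lie in the same component of $\varGamma'\setminus\{z\}$) which the paper states only as a global biconditional; this pointwise fact is standard, but if you use this route you should state it or cite it explicitly rather than just flagging it with ``i.e.''. Also note that the loop case you treat separately is vacuous: a $2$-connected graph on at least two vertices cannot have a loop, so that paragraph can be dropped.
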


\begin{proof}
Since we are working with a bridgeless graph, we may use Example~\ref{ex:bridgelessSlopes} to compute $s(\overline{\varGamma})$ as the $\swt$-weighted degree of the canonical divisor: 
\[
s(\overline{\varGamma})   = \sum_{p\in V} \left(v(p)-2+2\mathbf{q}(p) \right) \swt(p) \, ,
\]
\[
\swt(p) = \sum_{q \in V}  r(p,q) \, \mathbf{q}(q)  + \sum_{e = \{e^+, \, e^-\} \in E} j^{\varGamma \setminus e}_p(e^+,e^-) \, \mathsf{F}(e) \, .
\]

By construction $\overline{G}$ is {\em stable}, so $v(p)-2+2\mathbf{q}(p)>0$ for all $p \in V$. Note also that all summands in the expression for $\swt(p)$ are non-negative. Therefore:
\begin{itemize}[leftmargin=*]
\item $s(\overline{\varGamma}) > 0$ if and only if $\swt(p) > 0$ for some $p\in V$.
\item $\swt(p) > 0$ if and only if at least one of the summands in its expression above is strictly positive.
\end{itemize}

\medskip

(a) Let $V= \{p\}$. Then $r(p,p) = 0$ and, for all edges $e$, we have $j^{\varGamma \setminus e}_p(e^+,e^-) = 0$ because $p=e^+=e^-$. It follows that $\swt(p) = 0$ and $s(\overline{\varGamma}) = 0$.

\medskip

(b) Let $q \in V$ be such that $\mathbf{q}(q)>0$. Consider another vertex $p \ne q$. Since the effective resistance function $r$ is a distance function, we must have $r(p,q) > 0$. It follows that $r(p,q) \mathbf{q}(q)>0$. We obtain $\swt(p) >0$ and, therefore, $s(\overline{\varGamma}) > 0$.

\medskip

(c) By  part (b) we may assume that $\mathbf{q} \equiv 0$. Since $v(p)-2+2\mathbf{q}(p) > 0$, for all $p \in V$, we must also have $v(p) \geq 3$. 

\medskip

\noindent {\bf Claim 1.} There exists an edge $e$ such that $G \backslash e$ remains $2$-connected. 

\medskip

\noindent {\em Proof of Claim 1.} Consider Whitney's construction of the $2$-connected graph $G$ via an open ear decomposition, described in \S\ref{sec:ear}. The last path added in the ear decomposition must be a single edge $e$. Otherwise, if it is a path of length at least two, the graph will have a vertex of valency $2$. Removing this last edge $e$ will result in a graph with an open ear decomposition, so the remaining graph will still be $2$-connected.

\medskip

\noindent {\bf Claim 2.} There exist $p \in V$ and $e = \{e^+,e^-\} \in E$ such that $j^{\varGamma \setminus e}_p(e^+,e^-) > 0$. 

\medskip

\noindent {\em Proof of Claim 2.} Let $e$ be as in Claim 1.  Let $p \not\in \{e^+,e^-\}$, which exists because $|V| \geq 3$. As $\varGamma \setminus e$ is $2$-connected, Lemma \ref{lem:j_2conn} guarantees $j^{\varGamma \setminus e}_p(e^+,e^-) > 0$.

\medskip

Since $G$ is bridgeless we have $F(e) > 0$ for any edge $e \in E$. Let $p \in V$ and $e \in E$ be as in Claim 2. Then  $j^{\varGamma \setminus e}_p(e^+,e^-) \, \mathsf{F}(e) > 0$. We obtain $\swt(p) >0$ and, therefore, $s(\overline{\varGamma}) > 0$.
\end{proof}

\begin{thm} \label{thm:slopezero}
Let $\overline{\varGamma} = (\varGamma,\mathbf{q})$ be a connected polarized metric graph. Assume the genus $g$ of $\overline{\varGamma}$ is at least two. Let $\overline{G} = (G,\mathbf{q})$ be its {\em minimal model}. The slope $s(\overline{\varGamma})$ vanishes if and only if one of the following two conditions holds:
\begin{itemize}
\item[(a)] $\mathbf{q} \equiv 0$ and $G$ is the graph on two vertices joined by $g+1$ parallel edges.
\item[(b)] The block-tree decomposition of $G$ consists of (at most $g$) loops and (an arbitrary number of) bridges. 
\end{itemize}
\end{thm}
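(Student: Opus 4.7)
The plan is to leverage the additivity of the slope over $2$-connected blocks (Proposition~\ref{prop:additivity}) together with the block-by-block classification in Theorem~\ref{thm:slopezero_blocks}. For the ``if'' direction: under~(b), every block is a bridge (slope zero by Example~\ref{ex:segment}) or a single loop; for a loop block of genus $g$ with pushforward polarization $g-1$ at its unique vertex, Example~\ref{ex:loop} gives $(8g+4)\lambda = g\,\delta$, while $\delta_0 = \delta$ and $\delta_h = 0$ for $h\geq 1$, so $s=0$. Additivity then yields $s(\overline{\varGamma})=0$. Under~(a), the bridgeless formula of Example~\ref{ex:bridgelessSlopes} applies with $V=\{v_1,v_2\}$ and $\mathbf{q}\equiv 0$: the first sum in $\swt(v_i)$ vanishes because $\mathbf{q}\equiv 0$, and every term of the second sum vanishes because $v_i$ is an endpoint of every edge~$e$, forcing $j^{\varGamma\setminus e}_{v_i}(e^+,e^-)=0$ by the defining property $j^\varGamma_z(z,y)=0$.

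For the ``only if'' direction, assume $s(\overline{\varGamma})=0$. Proposition~\ref{prop:additivity} gives $s(\overline{\varGamma}_\alpha)=0$ for each $2$-connected block $\overline{\varGamma}_\alpha$ of $G$. Such a block is either a single loop, or $2$-connected without loops, in which case its minimal model satisfies $|V|\geq 2$. Parts~(b) and~(c) of Theorem~\ref{thm:slopezero_blocks} then force $|V|=2$ and pushforward polarization identically zero; since the block has genus $g$ (equal to that of $G$) and $\sum \mathbf{q}_\alpha=0$, we conclude $b_1=g$, so the minimal model of the block is precisely two vertices joined by $g+1$ parallel edges.

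If every $2$-connected block of $G$ is a single loop, then the block-tree consists only of loops and bridges, giving case~(b). Otherwise, fix a non-loop $2$-connected block $B$ as above. Additivity of $b_1$ across blocks combined with $b_1(B)=g=\overline{g}(G)$ forces $b_1(B')=0$ for every other block $B'$, so all other blocks are bridges; the genus identity $\overline{g}(G)=b_1(G)+\sum\mathbf{q}$ then yields $\mathbf{q}\equiv 0$ on~$G$. Since the minimal model $G$ is stable with trivial polarization, every vertex of $G$ has valency at least three. A non-trivial subgraph $T$ of bridges attached to $B$ at $v_1$ or $v_2$ would be a finite tree with at least one leaf $w\neq v_1,v_2$, whose valency in $G$ equals its valency in $T$, namely~$1$, contradicting stability. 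Hence no bridges attach, $G=B$, and we are in case~(a). The delicate point is this last stability argument: it requires combining additivity of $b_1$, the genus balance $\overline{g}=b_1+\sum\mathbf{q}$, and stability of the minimal model to rule out any attachment to $B$.
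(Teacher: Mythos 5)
Your proposal is correct, and it follows the same block-by-block strategy as the paper (apply Proposition~\ref{prop:additivity} to reduce to $2$-connected blocks, then invoke Theorem~\ref{thm:slopezero_blocks}); the two proofs diverge only in the final step of the `only if' direction, where one must show that if some block $B$ has a two-vertex banana minimal model then $G=B$. The paper handles this locally: it fixes a putative separating vertex $p$ of $G$ lying in $B$ and observes that the pushforward polarization $\mathbf{q}_B(p)$ is replaced by the genus of the subgraph of $G$ hanging off $p$, which is at least one by stability of the minimal model; this contradicts the conclusion $\mathbf{q}_B\equiv 0$ from Theorem~\ref{thm:slopezero_blocks}(b) directly, with no further bookkeeping. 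You instead argue globally: from $b_1(B)=g=\overline{g}(G)$, additivity of $b_1$ over blocks, and the genus identity $\overline{g}(G)=b_1(G)+\sum\mathbf{q}$, you force $\mathbf{q}\equiv 0$ on all of $G$ and all other blocks to be bridges, and only then use stability (valency $\geq 3$ when $\mathbf{q}\equiv 0$) to forbid hanging trees by looking at their leaves. Both arguments ultimately rest on stability of the minimal model, and both are valid; the paper's is a bit shorter because it exploits the already-proved stability of blocks and contradicts the block-level conclusion immediately, whereas yours makes the global structure of $G$ explicit first, which is somewhat more illuminating about why case~(a) is rigid but at the cost of an extra Betti-number bookkeeping step.
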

Figure~\ref{fig:two graphs} illustrates the two possibilities. A graph of type (a) is sometimes called a {\em banana graph}. For a graph of type (b), after contracting all the bridge edges, we obtain a graph with one vertex and multiple loops. This is sometimes called a {\em bouquet of circles} or a {\em rose graph}. 

\begin{figure}[h!]
$$
\begin{xy}
(0,0)*+{
	\scalebox{.7}{$
	\begin{tikzpicture}
	\draw[black, ultra thick] (0,1.2) to [out=-170,in=90] (-2,0);
	\draw[black, ultra thick] (-2,0) to [out=-90,in=170] (0,-1.2);
	\draw[black, ultra thick] (0,1.2) to [out=-150,in=90] (-1,0);
	\draw[black, ultra thick] (-1,0) to [out=-90,in=150] (0,-1.2);
	\draw[black, ultra thick] (0,1.2) to [out=-10,in=90] (2,0);
	\draw[black, ultra thick] (2,0) to [out=-90,in=10] (0,-1.2);
	\draw[black, ultra thick] (0,1.2) to (0,-1.2);	
	\fill[black] (0,1.2) circle (.1);
	\fill[black] (0,-1.2) circle (.1);
	\end{tikzpicture}
	$}
};
(-16,0)*+{\mbox{{\tiny $m_1$}}};
(-9.1,0)*+{\mbox{{\tiny $m_2$}}};
(-2.1,0)*+{\mbox{{\tiny $m_3$}}};
(6,0)*+{\mbox{{ $\ldots$}}};
(18.1,0)*+{\mbox{{\tiny $m_{g+1}$}}};
(0,10.5)*+{\mbox{{\tiny $\color{gray} v_1$}}};
(0,-10.5)*+{\mbox{{\tiny $\color{gray} v_2$}}};
(0,-20)*+{(a)};
\end{xy}
\ \ \ \ \ \ \ \ \ \ 
\begin{xy}
(0,0)*+{
	\scalebox{.7}{$
	\begin{tikzpicture}
	\draw[black, ultra thick] (-3.5,0) to (3.2,0);
	\draw[black, ultra thick] (1,0) to (1,2.2);
	\fill[black] (-3.5,0) circle (.1);
	\fill[black] (-2.1,0) circle (.1);
	\fill[black] (-1,0) circle (.1);
	\fill[black] (1,0) circle (.1);
	\fill[black] (1,1.2) circle (.1);
	\fill[black] (1,2.2) circle (.1);
	\fill[black] (3.2,0) circle (.1);
	\draw[black, ultra thick] (-3.5,0) to [out=-40,in=0] (-3.5,-1.1);
	\draw[black, ultra thick] (-3.5,-1.1) to [out=180,in=-140] (-3.5,0);
	\draw[black, ultra thick] (-1,0) to [out=-40,in=0] (-1,-0.8);
	\draw[black, ultra thick] (-1,-0.8) to [out=180,in=-140] (-1,0);
	\draw[black, ultra thick] (-1,0) to [out=0,in=0] (-1,1.3);
	\draw[black, ultra thick] (-1,1.3) to [out=-180,in=180] (-1,0);
	\draw[black, ultra thick] (1,1.2) to [out=50,in=90] (1.8,1.2);
	\draw[black, ultra thick] (1.8,1.2) to [out=-90,in=-50] (1,1.2);
	\draw[black, ultra thick] (3.2,0) to [out=-60,in=0] (3.2,-1);
	\draw[black, ultra thick] (3.2,-1) to [out=180,in=-120] (3.2,0);
	\draw[black, ultra thick] (3.2,0) to [out=60,in=0] (3.2,0.8);
	\draw[black, ultra thick] (3.2,0.8) to [out=-180,in=120] (3.2,0);
	\draw[black, ultra thick] (3.2,0) to [out=60,in=90] (5,0);
	\draw[black, ultra thick] (5,0) to [out=-90,in=-60] (3.2,0);
	\end{tikzpicture}
	$}
};
(-28,-2)*+{\mbox{{\tiny $\color{gray} v_1$}}};
(-18,-2)*+{\mbox{{\tiny $\color{gray} v_2$}}};
(-10.5,-2)*+{\mbox{{\tiny $\color{gray} v_3$}}};
(4,-6.5)*+{\mbox{{\tiny $\color{gray} v_4$}}};
(1,4)*+{\mbox{{\tiny $\color{gray} v_5$}}};
(4,13.3)*+{\mbox{{\tiny $\color{gray} v_6$}}};
(21.5,-4.2)*+{\mbox{{\tiny $\color{gray} v_7$}}};
(-28,-13.2)*+{\mbox{{\tiny $m_1$}}};
(-23,-5.5)*+{\mbox{{\tiny $m_2$}}};
(-14.7,-5.5)*+{\mbox{{\tiny $m_3$}}};
(-10.5,-11)*+{\mbox{{\tiny $m_4$}}};
(-10.5,6)*+{\mbox{{\tiny $m_5$}}};
(-3.5,-5.5)*+{\mbox{{\tiny $m_6$}}};
(5.6,-0.5)*+{\mbox{{\tiny $m_7$}}};
(11.2,4)*+{\mbox{{\tiny $m_8$}}};
(5.6,8)*+{\mbox{{\tiny $m_9$}}};
(12,-5.5)*+{\mbox{{\tiny $m_{10}$}}};
(19,-12.5)*+{\mbox{{\tiny $m_{11}$}}};
(34.2,-4)*+{\mbox{{\tiny $m_{12}$}}};
(19,3)*+{\mbox{{\tiny $m_{13}$}}};
(0,-20)*+{(b)};
\end{xy}
$$
\captionsetup{singlelinecheck=off}
\caption[]{
Polarized metric graphs with vanishing slopes, represented by their minimal models (Theorem~\ref{thm:slopezero}). Vertices of the minimal model are denoted by $v_i$ and the edge lengths are denoted by $m_i$.
\begin{itemize}
\item[(a)] The graph with $g+1$ parallel edges and $\mathbf{q} \equiv 0$.
\item[(b)] An example of a graph whose block-tree decomposition consists of only loops and bridges. The sum of $\mathbf{q}(v_i)$'s plus the number of loops is equal to $g$.
\end{itemize}}
\label{fig:two graphs}
\end{figure}

\begin{proof}
Assume $s(\overline{\varGamma}) = 0$. By Proposition~\ref{prop:additivity}, for each $2$-connected block $H_i$ of $G$, we must have $s(\overline{H_i}) = 0$. By Theorem~\ref{thm:slopezero_blocks} part (c), $H_i$ cannot have more than two vertices. In the case that $H_i$ has precisely two vertices, by Theorem~\ref{thm:slopezero_blocks} part (b), we must have $\mathbf{q} \equiv 0$.

\medskip

\noindent {\bf Claim.} If $G$ has a block $H_i$ with precisely two vertices, then $G = H_i$.

\medskip

\noindent {\em Proof of the Claim.} Assume $G$ has more than one block. Let $p$ be a separating vertex of $G$ in $H_i$. 
The value of the induced polarization $\mathbf{q}_i$ at $p$ is replaced by the genus of the entire subgraph of $G$ that touches the block $H_i$ at $p$. This genus is at least one. By Theorem~\ref{thm:slopezero_blocks} part (b) we obtain $s(\overline{H_i}) > 0$, a contradiction.

\medskip

The `only if' part now follows from the block-tree decomposition theorem, described in \S\ref{sec:block tree}. 
The `if' part is an immediate consequence of Proposition~\ref{prop:additivity} and the explicit formula in Example~\ref{ex:bridgelessSlopes}.
\end{proof}
We note that Theorem~\ref{thm:slopezero}, together with Theorem~\ref{main:intro}, proves Theorem~\ref{cor:vanishing}.

\section{Tropical moments}  \label{trop_moment}

The purpose of this section is to define the tropical moment of a lattice, of a positive definite matrix, and of a principally polarized tropical abelian variety. This section serves as a preparation for the results and proofs given in \S\ref{sec:Iinv_asymp}.

\subsection{Tropical moment of a lattice}
A \emph{lattice} is the datum of a finitely generated free abelian group $\varLambda$ together with an inner product $[ \cdot,\cdot]$ on the real vector space $\varLambda \otimes \rr$. Let $(\varLambda,[\cdot,\cdot])$ be a lattice and denote by $\|\cdot\|$ the resulting norm on the vector space $V=\varLambda \otimes \rr$. We define the \emph{Voronoi region} associated to $(\varLambda,[\cdot,\cdot])$ to be the symmetric, convex, compact subset
\[ \Vor(\varLambda) = \{ v \in V \, | \, \forall{\lambda \in \varLambda} \, : \|v\| \leq \|v - \lambda \| \}  \]
of $V$. The compact set $\Vor(\varLambda)$ is a fundamental domain of the lattice $\varLambda$ in $V$, in the sense that the natural map $\Vor(\varLambda) \to V/\varLambda$ is surjective, and injective on the interior of $\Vor(\varLambda)$.

Let $\mu_L$ be a Lebesgue measure on $V$. We define the \emph{tropical moment} of the lattice $(\varLambda,[\cdot,\cdot])$ to be the quotient
\begin{equation} \label{def-I}
 I(\varLambda)  = \frac{\int_{\Vor(\varLambda)} \| v \|^2 \, \d \, \mu_L  }{ \int_{\Vor(\varLambda)} \d \, \mu_L } \, .
\end{equation}
The real number $I(\varLambda)$ is independent of the choice of Lebesgue measure $\mu_L$. 

\subsection{Tropical moment of a positive definite matrix} 

Let $g \in \zz_{\geq 0}$ and let $Z \in \Mat(g \times g,\rr)$ be a positive definite $g\times g$ matrix. Then we denote by $I(Z)$ the tropical moment of the lattice $ \zz^g$ in $\rr^g$ where the inner product on $\rr^g$ is given by $(\alpha,\beta) \mapsto \alpha^t Z   \beta$, i.e.\ the inner product on $\rr^g$ that has Gram matrix $Z$ on the standard basis. We refer to $I(Z)$ as the tropical moment of the positive definite matrix $Z$. It is easy to see that $I(Z)$ is the tropical moment of any lattice for which $Z$ is a Gram matrix on a basis. 

We have the following explicit formula for $I(Z)$. We write $\Vor(Z)$ for the Voronoi region of the lattice $(\zz^g,Z)$. The standard Lebesgue measure on $\rr^g$ gives $\Vor(Z)$ volume one. Indeed, the unit box $[0,1]^g$ has volume one, both the unit box and $\Vor(Z)$ are fundamental domains for $\zz^g$ in $\rr^g$, and all fundamental domains for $\zz^g$ in $\rr^g$ have the same volume. Formula (\ref{def-I}) thus specializes to give
\begin{equation} \label{formula-I}
 I(Z) = \int_{\beta \in \Vor(Z)} \beta^t  Z   \beta \, \d \, \beta \, . 
\end{equation}
We note that for $\beta \in \rr^g$, the condition that $\beta \in \Vor(Z)$ can be written as
\begin{equation} \label{when_in_Vor}
 \forall \, n \in \zz^g \, : \, n^t    Z   \beta \leq \frac{1}{2} n^t    Z   n \, . 
\end{equation}
It immediately follows that for all $\lambda \in \rr_{>0}$ we have $\Vor(\lambda Z) = \Vor(Z)$. By (\ref{formula-I}) we then find $I(\lambda Z) = \lambda I(Z)$.

We will also work with the following slight generalization.
 Let $V \subset \rr^g$ be any compact set, and let $Z \in  \Mat(g \times g, \rr)$ be any $g\times g$ matrix with entries in $\rr$. Then we define
\begin{equation} \label{def:IV}
 I_V(Z) = \int_{\beta \in V} \beta^t  Z   \beta \, \d \, \beta \, . 
\end{equation}
Thus, in particular, when $Z$ is positive definite we have $I(Z) = I_{\Vor(Z)}(Z)$. We can view $I_V$ as a linear functional on $\Mat(g \times g, \rr)$.

The proof of the following lemma is left to the reader.
\begin{lem} \label{IVA_IVQ0} Let $g \in \zz_{\geq 0}$ and let $r \in \{0,\ldots, g\}$. Let $A$ be a real $g\times g$ matrix in block form
\[ A = \begin{pmatrix}[c|c] A_0 & 0 \\
\hline 
0 & 0 \\
\end{pmatrix}  \, , \]
with $A_0$ positive definite and of size $r\times r$.   Let $Z$ be a real $g\times g$ matrix in block form
\[ Z = \begin{pmatrix}[c|c] Z_{r,r} & Z_{r,g-r} \\
\hline 
Z_{g-r,r} & Z_{g-r,g-r} \\
\end{pmatrix}  \, , \]
with $Z_{g-r,g-r}$ positive definite and of size $(g-r)\times(g-r)$. Write 
\[ V = \Vor(A_0) \times \Vor(Z_{g-r,g-r}) \subset \rr^r \times \rr^{g-r} = \rr^g \, . \]
Then  $V$ is a fundamental domain for $\zz^g$ in $\rr^g$, and the equalities  
\[ I_V(A) = I(A_0) \]
and
\[ I_V(Z ) = I_{\Vor(A_0)}(Z_{r,r} ) + I(Z_{g-r,g-r} )   \]
hold.
\end{lem}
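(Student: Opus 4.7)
The plan is to decompose every $\beta \in \rr^g$ along the block splitting as $\beta = (\beta_0, \beta_1)$ with $\beta_0 \in \rr^r$ and $\beta_1 \in \rr^{g-r}$, and then apply Fubini's theorem. The two elementary facts one needs are that any Voronoi region is a fundamental domain of unit Lebesgue volume, and that any Voronoi region is centrally symmetric about the origin (the latter is immediate from the defining condition $\|v\| \leq \|v - \lambda\|$ for all $\lambda$ by replacing $\lambda$ with $-\lambda$).

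For the fundamental-domain claim, since $\zz^g = \zz^r \times \zz^{g-r}$ and each factor of $V$ is a fundamental domain for the corresponding lattice factor, the product $V$ is a fundamental domain for $\zz^g$; its Lebesgue volume is the product of the factor volumes, hence equals $1$.

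For the first equality, the block form of $A$ yields $\beta^t A \beta = \beta_0^t A_0 \beta_0$, with no dependence on $\beta_1$. Fubini then gives
\[
I_V(A) = \int_{\Vor(A_0)} \beta_0^t A_0 \beta_0 \, \d\beta_0 \cdot \int_{\Vor(Z_{g-r,g-r})} \d\beta_1 = I(A_0) \cdot 1 = I(A_0).
\]
For the second equality, expanding by blocks gives
\[
\beta^t Z \beta = \beta_0^t Z_{r,r} \beta_0 + 2 \beta_0^t Z_{r,g-r} \beta_1 + \beta_1^t Z_{g-r,g-r} \beta_1.
\]
Integrating over $V$ term by term, the cross term vanishes because the inner integral $\int_{\Vor(Z_{g-r,g-r})} \beta_1 \, \d\beta_1 = 0$ by central symmetry; the two remaining terms each factor over the product, with the complementary Voronoi integral contributing unit volume, yielding exactly $I_{\Vor(A_0)}(Z_{r,r}) + I(Z_{g-r,g-r})$.

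No genuine obstacle arises; the entire lemma is a Fubini-plus-symmetry bookkeeping exercise. The single mildly conceptual input is the central symmetry of the Voronoi region, which kills the cross term in $I_V(Z)$; everything else amounts to correctly tracking the unit volume of the relevant Voronoi cells.
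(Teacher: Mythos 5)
The paper explicitly leaves this lemma to the reader, so there is no proof to compare against, but your argument is correct and is exactly the expected Fubini-plus-symmetry computation. The fundamental-domain claim and both integral identities follow precisely as you describe; the key input that Voronoi regions are centrally symmetric (so the cross term vanishes) is indeed immediate from the defining inequality, and the unit-volume fact is already noted in the paper in the discussion preceding equation~(\ref{formula-I}).

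One small caveat: you wrote the cross term as $2\beta_0^t Z_{r,g-r}\beta_1$, which implicitly assumes $Z$ is symmetric; the lemma as stated does not require this. For general $Z$ the cross term is $\beta_0^t(Z_{r,g-r}+Z_{g-r,r}^t)\beta_1$, but this changes nothing since the integral of any bilinear form in $(\beta_0,\beta_1)$ over the product of two centrally symmetric sets vanishes. (In the application $Z = \Im\varOmega(t)$ is symmetric anyway, and one can always replace $Z$ by $\tfrac{1}{2}(Z+Z^t)$ without changing $\beta^t Z\beta$.) So the conclusion is unaffected.
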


\subsection{Tropical moment of a  principally polarized tropical abelian variety} \label{sec:pptav}

For the purposes of the present paper, a \emph{principally polarized tropical abelian variety} consists of two finitely generated free abelian groups $X$, $Y$, together with an isomorphism $\varPhi \colon Y \isom X$ and a homomorphism $b \colon Y \to X^*=\Hom(X,\zz)$, with the property that the composite $\varPhi^* \circ b \colon Y \to Y^*$ is a non-degenerate self-dual map that defines an inner product $[\cdot,\cdot]_Y$ on~$Y_\rr$. 

Let $(X, Y, \varPhi, b)$ be a principally polarized tropical abelian variety. Then we define the tropical moment of $(X,Y,\varPhi,b)$ to be the tropical moment of the lattice $Y$ inside the inner product space $(Y_\rr,[\cdot,\cdot]_Y)$.

Let $\varSigma = Y_\rr/Y$; then we view $\varSigma$ as a polarized real torus, with inner product on its tangent space $Y_\rr$ given by $[\cdot,\cdot]_{Y}$. We will often write $I(\varSigma)$ for the tropical moment of the principally polarized tropical abelian variety $(X, Y, \varPhi, b)$.

\section{Degenerating abelian varieties}  \label{sec:AbVarHS} \label{deg_ab_var}

The purpose of this section is to briefly review some of the structures underlying families of degenerating principally polarized complex abelian varieties. The material in this section expands on the discussion in \cite[Chapter~II.0]{fc}.

\subsection{Normal form} \label{sec:standard_form}
Let $g \in \zz_{\geq 0}$. For $m_1, m_2, n_1, n_2 \in \zz^g$ we set
\begin{equation} \label{polar} E((m_1,m_2),(n_1,n_2 )) = m_1^t n_2 - m_2^t n_1 \,  .
\end{equation}
Thus $E$ defines the standard symplectic form on the free abelian group $\zz^{2g}$. We write
\begin{equation}
 \mathbb{H}_g = \{ \varOmega \in \Mat(g \times g,\cc) \, | \, \varOmega = \varOmega^t \,\, \textrm{and} \,\, \Im \varOmega > 0 \} 
\end{equation}
 for the Siegel  upper half space in degree~$g$. 
For each $\varOmega  \in \mathbb{H}_g$ we have a lattice $\varLambda_\varOmega = \zz^g + \varOmega \zz^g$ inside $\cc^g$ and we write $A_\varOmega$ for the complex torus $\cc^g / \varLambda_\varOmega$. 
The complex torus $A_\varOmega$ has a natural structure of principally polarized complex abelian variety, where the associated symplectic form $\varLambda_\varOmega \times \varLambda_\varOmega \to \zz$ is given by sending $((m_1+ \varOmega m_2), (n_1 + \varOmega n_2))$ to $E((m_1,m_2),(n_1,n_2 ))$. 

\subsection{An equivalence of categories} \label{sec:equiv_cat}
Conversely, every  principally polarized complex abelian variety $(A, a \colon A \isom A^t)$ is isomorphic to one of the form $A_\varOmega$.
We can make this statement more precise using the language of Hodge structures.
Namely, a polarized complex  abelian variety $(A,a \colon A \to A^t)$ can equivalently be thought of as a polarized Hodge structure of type $(-1,0)$, $(0,-1)$. 

Given a polarized complex abelian variety $(A,a)$ the associated polarized Hodge structure is the free abelian group $H= H_1(A,\zz)$ with $F^{0}H_\cc = H^0(A,\varOmega^1)^\lor$ and with polarization $H_1(A,\zz) \to H_1(A,\zz)^*(1) = H_1(A^t,\zz) $ given by $a_*$. Vice versa, given a polarized Hodge structure $(H,F^\bullet,Q)$ of type $(-1,0)$, $(0,-1)$ the associated polarized complex abelian variety has underlying torus given by the intermediate Jacobian $JH = H_\cc / (F^0H_\cc + H)$ and polarization given by $Q$. 

\subsection{Period matrices}
When the polarization $Q \colon H \to H^*(1)$ is an isomorphism, upon choosing a symplectic basis of $H$, the pair $(H,Q)$ can be identified with the standard symplectic space $\zz^{2g} $ with polarization given by the form $E$ as in (\ref{polar}). Under this identification,
the $g$-dimensional complex subspace $F^0H_\cc \subset H_\cc $ corresponds to the row space in $\cc^{2g}$ of the matrix $( \mathrm{Id}_g \, | \, \varOmega )$ for a uniquely determined $\varOmega \in \mathbb{H}_g$. 

We call $\varOmega$ the \emph{period matrix} of $(H,F^\bullet,Q)$ on the chosen symplectic basis. We see that the moduli space $\aa_g$ of principally polarized complex abelian varieties of dimension~$g$ (viewed as an orbifold) is naturally identified with $\Sp(2g,\zz) \setminus \mathbb{H}_g$. The map $\cc^{2g} \to \cc^g$ given by $(m_1,m_2) \mapsto -\varOmega m_1 + m_2$ for $m_1, m_2 \in \cc^g$ induces an isomorphism of the principally polarized abelian variety $JH =H_\cc/(F^0H_\cc + H)$ with the principally polarized abelian variety $A_\varOmega = \cc^g / (\zz^g + \varOmega \zz^g)$.

\subsection{The period map} \label{sec:period_map}
Let $S$ be a complex manifold, and let $(f \colon \mathcal{A} \to S, \alpha)$ be a holomorphic family of polarized complex abelian varieties. This datum can equivalently be thought of as a polarized variation of pure Hodge structures $\mathcal{H}$ of type $(-1,0)$, $(0,-1)$ over $S$. Globalizing the construction from \S\ref{sec:equiv_cat}, the equivalence is given by setting $\mathcal{H} = R^1f_* \zz_\mathcal{A}(1)$ and endowing the vector bundle $\mathcal{H}_\cc \otimes \oo_S$ with the natural Hodge filtration determined by $\mathcal{F}^0 \left(\mathcal{H}_\cc \otimes \oo_S \right)= (f_* \varOmega^1_{\mathcal{A}/S})^\lor$. 

Assume that $S$ is connected and let  $H$ denote the fiber of $\mathcal{H}$ at a chosen basepoint in $S$. The family $\alpha$ of polarizations translates into a polarization $Q \colon H \to H^*(1)$. We will assume throughout that the polarizations $\alpha$ are principal; equivalently, we assume that the map $Q$ is an isomorphism. Let $\widetilde{S} \to S$ be a universal covering space. The variation $\mathcal{H}$ pulls back to $\widetilde{S}$ to give a trivial local system $\widetilde{\mathcal{H}}$ with fiber $H$ and polarization $Q \colon H \isom H^*(1)$. As is customary to do, we choose a symplectic basis of $H$ and view it as a global basis of the pullback variation $\widetilde{\mathcal{H}}$ over $\widetilde{S}$. By taking the associated period matrices as in \S\ref{sec:equiv_cat} we obtain a holomorphic \emph{period map} $ \varOmega\colon \widetilde{S} \to \mathbb{H}_g$ that we often think of as a multi-valued holomorphic map $\varOmega \colon S \to \mathbb{H}_g$.

In the next sections we assume that the base manifold $S$ is the punctured unit disk $\dd^*$ in $\cc$. We choose an identification $\widetilde{S} = \mathbb{H}$ with universal covering map $\widetilde{S} \to S$ given by $u \mapsto \exp(2\pi i u)$. We shall assume throughout that the variation $\mathcal{H}$ has unipotent monodromy around the origin of $\dd$. This means that  the  family $f \colon \mathcal{A} \to \dd^*$ extends to a holomorphic family of semiabelian varieties $\mathcal{G} \to \dd$. 

\subsection{Associated semiabelian variety}
We denote by $G=\mathcal{G}_0$ the complex semiabelian variety that results  from taking the fiber of $\mathcal{G}$ at the origin. We then have a canonical short exact sequence of complex group varieties
\begin{equation} \label{eqn:ses}
 1 \to T \to G \to P \to 0 \, , 
\end{equation}
with $T  $ the toric part of $G$, and $P=G/T$ an abelian variety. We write  $X=\Hom(T,\gg_\mathrm{m})$ for the character group of $T$ and $X^*=\Hom(X,\zz)=H_1(T)$ for its co-character group. 

The family $f^t \colon \aa^t \to \dd^*$ of dual abelian varieties  similarly gives rise to a complex semiabelian variety $G^t$, fitting in a short exact sequence
\[ 1 \to T^t \to G^t \to P^t \to 0 \, , \]
with $T^t$ the dual torus of $T$ and $P^t=G^t/T^t$ the dual abelian variety of $P$. The holomorphic family $\alpha$ of principal polarizations naturally gives rise to an isomorphism of short exact sequences
\begin{equation} \label{morphism_ses} \xymatrix{ 1 \ar[r] & T  \ar[d]^{\cong} \ar[r] & G \ar[d]^{\cong} \ar[r] & P \ar[d]^{\cong} \ar[r] & 0  \\ 
1 \ar[r] & T^t \ar[r] & G^t \ar[r] & P^t \ar[r] & 0 \, . }   
\end{equation}
We write $Y$ for the character group of $T^t$. Pulling back along the isomorphism $T \isom T^t$ in (\ref{morphism_ses}) we find an isomorphism $\varPhi \colon Y \isom X$ of abelian groups.

\subsection{Associated limit mixed Hodge structure}
The fiber $H$ is equipped with a natural limit mixed Hodge structure. Its weight filtration can be defined over $\zz$, and takes the form
\[ 0 \subseteq W_{-2}H = X^* \subseteq W_{-1}H \subseteq W_0H = H \, . \]
One may view $H^*(1)$ as the limit mixed Hodge structure of the family of dual abelian varieties. Thus similarly  $H^*(1)$ has a natural limit mixed Hodge structure, with weight filtration
\[ 0 \subseteq W_{-2}H^*(1) = Y^* \subseteq W_{-1}H^*(1) \subseteq W_0H^*(1) = H^*(1) \, . \]
We have identifications $W_{-1}H = H_1(G)$ and $W_{-1}H^*(1)=H_1(G^t)$. Also we have  identifications of the weight graded pieces
\[ \Gr_{-2}H = X^* \, , \quad \Gr_{-1}H = H_1(P) \, , \quad \Gr_0 H = Y \, , \]
where the identity $\Gr_0 H = Y $ can be seen from the fact that the weight filtration of the dual $H^*(1)$ is  identified with the dual of the weight filtration of $H$. 

The limit mixed Hodge structure $H$ has a graded polarization induced from the polarization $Q$. For instance, the pure Hodge structure $\Gr_{-1}H$ corresponds to the abelian variety $P$; the natural induced polarization $P \to P^t$ coincides with the isomorphism given in (\ref{morphism_ses}) and is thus principal. Also we obtain a commutative diagram
\begin{equation} \label{comm_diagrams} \xymatrix{ H \ar[r]^{Q}_{\cong \quad}    & H^*(1)    \\
X^* \ar[u] \ar[r]^{\varPhi^*}_{\cong \quad} & Y^* \ar[u] 
}   
\end{equation}
where the vertical arrows are the natural inclusions. 

\subsection{Monodromy pairing} \label{sec:monodr_pairing}
Let $T \colon H \isom H$ denote the monodromy operator determined by a positively oriented loop in $\pi_1(\dd^*) \cong \zz$, and write $N = T - \id_H$. The map $N \colon H \to H$ is  nilpotent and descends to give a natural map 
\[b \colon Y = \Gr_0 H \longrightarrow \Gr_{-2}H = X^* \, , \] 
called the \emph{monodromy pairing}. The free abelian groups $X$, $Y$ together with the homomorphisms $\varPhi \colon Y \isom X$ and $b \colon Y \to X^*$ form a principally polarized tropical abelian variety (in the sense of \S\ref{sec:pptav}) canonically associated to the holomorphic family $(f \colon \mathcal{A} \to \dd^*, \alpha)$ of principally polarized complex abelian varieties. 

We denote by $[\cdot,\cdot]_Y$ the  inner product on $Y_\rr$ coming from the homomorphism $\varPhi^* \circ b \colon Y \to Y^*$. 
Let $\psi \colon H \to Y=\Gr_0 H$ denote the natural projection map. Via the commutative diagram (\ref{comm_diagrams}) and the relation $N = b \circ \psi$ we arrive at the equalities
\begin{equation} \label{monodromy}
\begin{split} [\psi(h),\psi(k)]_{Y} & =  (\varPhi^* \circ b )(\psi(h))(\psi(k)) \\
 & = \varPhi^*(Nh) (\psi(k)) \\
 & = Q(Nh,k) 
 \end{split}
\end{equation}
for $h ,k \in H$.

Let $r = \dim T$ be the toric rank of the semi-abelian variety~$G$.  We choose a symplectic basis of $H$ and view it as a global basis of the pullback variation $\widetilde{\mathcal{H}}$ over the universal covering space $\mathbb{H}$ of $\dd^*$ as in \S\ref{sec:period_map}. The symplectic basis may be chosen in such a way that the map $N \colon H \to H$ is represented by a matrix
\begin{equation} \label{mono_matrix}
 M = \begin{pmatrix}[c|c] 0 & A \\
\hline 
0 & 0 \\
\end{pmatrix}  \, , 
\end{equation}
where $A$ is a symmetric integral $g\times g$ matrix in block form
\[ A = \begin{pmatrix}[c|c] A_0 & 0 \\
\hline 
0 & 0 \\
\end{pmatrix}  \, , \]
with $A_0$ positive definite and of size $r\times r$. 
\begin{prop} \label{monodromy-Gram-matrix} The matrix $A_0$ is a Gram matrix for the lattice $Y$ in the inner product space $(Y_\rr,[\cdot,\cdot]_Y)$.
\end{prop}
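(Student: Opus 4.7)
The plan is to read off the Gram matrix of $Y$ directly from the chosen symplectic basis using the key identity (\ref{monodromy}). Once an explicit $\zz$-basis of the lattice $Y = \Gr_0 H$ inside $H$ is in hand, the computation is purely mechanical.

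First I would pin down $W_{-1}H$ in terms of the chosen basis. For a polarized variation of pure Hodge structures of weight $-1$ with unipotent monodromy, the weight filtration on the limit mixed Hodge structure is the monodromy weight filtration centred at $-1$; in particular $N$ kills $W_{-1}H$ and induces an isomorphism $\Gr_0 H \isom \Gr_{-2}H(-1)$. Combining these two facts one reads off the equality $\ker N = W_{-1}H$, which holds at the level of $\zz$-lattices because $\ker N$ is automatically saturated in the torsion-free group $H$. Given the block form of the monodromy matrix $M$, the sublattice $\ker N$ is precisely the $\zz$-span of the first $g$ ``$e$-type'' symplectic basis vectors together with the last $g-r$ ``$f$-type'' basis vectors $f_{r+1},\ldots,f_g$. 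Consequently, denoting by $\psi \colon H \to Y$ the natural projection, the images $\psi(f_1),\ldots,\psi(f_r)$ form a $\zz$-basis of $Y$.

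Next I would evaluate the inner product $[\cdot,\cdot]_Y$ on this basis via (\ref{monodromy}). For $1 \le j,k \le r$, we have
\[
[\psi(f_j),\psi(f_k)]_Y = Q(Nf_j, f_k) = E(Mf_j, f_k),
\]
where in the second step one identifies $Q$ with the standard symplectic form $E$ of (\ref{polar}) and $N$ with its matrix $M$. A direct block-matrix calculation yields $Mf_j = \sum_{i=1}^{r} (A_0)_{ij}\, e_i$, and the definition of $E$ then gives $E(Mf_j, f_k) = (A_0)_{kj} = (A_0)_{jk}$, using the symmetry of $A_0$. Thus $A_0$ represents the bilinear form $[\cdot,\cdot]_Y$ with respect to the basis $(\psi(f_j))_{j=1}^{r}$, i.e., $A_0$ is the Gram matrix of $(Y,[\cdot,\cdot]_Y)$.

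The only step that takes any real thought is the identification $\ker N = W_{-1}H$ as integral sublattices of $H$; this rests on the theory of the monodromy weight filtration for polarized variations of weight $-1$ with unipotent monodromy, combined with the saturatedness of $\ker N$. Once this is granted, the remainder of the argument is an unambiguous matrix computation driven by (\ref{monodromy}).
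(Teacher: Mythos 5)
Your proposal is correct and follows essentially the same route as the paper: compute $[\psi(h),\psi(k)]_Y$ via the identity (\ref{monodromy}) together with the block form (\ref{mono_matrix}), and separately exhibit a $\zz$-basis of $Y$ on which the resulting bilinear form is represented by $A_0$. The only place you diverge is in how you produce the basis of $Y$: the paper argues via the image, observing that $\{Nk : k \in S\}$ is a basis of $NH = b(Y)$ and pulling back along the injective map $b$, whereas you argue via the kernel, identifying $\ker N = W_{-1}H$ (using the monodromy weight filtration) and reading off the complementary basis vectors. These are two framings of the same underlying fact (that $N$ induces an injection $\Gr_0 H \hookrightarrow \Gr_{-2} H$). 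One small imprecision in your kernel argument: to upgrade $\ker N = W_{-1}H$ from a statement over $\qq$ to one over $\zz$, what is actually needed is that $W_{-1}H$ is saturated in $H$ (equivalently, that $Y=\Gr_0 H$ is torsion-free), not merely that $\ker N$ is saturated; saturatedness of $\ker N$ alone would not rule out $W_{-1}H$ being a proper finite-index sublattice of $\ker N$. This is harmless here since the integral weight filtration is saturated by construction, but the justification as written points at the wrong lattice.
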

\begin{proof}  
 Let $h_1, h_2, k_1, k_2 \in \zz^g$.   Denote by $h$, $k$ the elements of $H$ corresponding to $(h_1,h_2)$, $(k_1,k_2) \in \zz^{2g}$. For $m \in \zz^g$ we denote by $m^{(r)} \in \zz^r$ the first $r$ coordinates of $m$. Using (\ref{polar}), (\ref{monodromy}) and (\ref{mono_matrix}) we compute
\begin{equation} \label{Gram}
 \begin{split} [\psi(h),\psi(k) ]_{Y} & = Q(Nh,k) \\
& = E(M (h_1,h_2),(k_1,k_2)) \\
& = E((A h_2,0),(k_1,k_2)  ) \\
& = h_2^t A k_2 \\
& = h_2^{(r),t} A_0 k_2^{(r)}  \, . 
\end{split} 
\end{equation}
Let $S \subset H$ be the subset corresponding  to the set of elements $(0,k_2) \in \zz^{2g} $ with $k_2 \in \zz^g$ of the form $(m_2,0)$ with $m_2$ a standard basis vector in $\zz^r$.
By (\ref{mono_matrix}) a basis of the free abelian group $NH \subset X^*$ is given by the elements $Nk$ where $k$ runs through $S$. Since $NH=b(Y)$, and $b$ is injective, we conclude that  a basis of the free abelian group $Y =\Gr_0 H $ is given by the elements $\psi(k)$ with $k$ running through $S$. The equalities in (\ref{Gram}) then yield the proposition.
\end{proof}

\subsection{Nilpotent Orbit Theorem} \label{sec:NilpOrb}
As in \S\ref{sec:period_map} we let $\varOmega \colon \mathbb{H} \to \mathbb{H}_g$ denote the period map associated to the family $(f \colon \mathcal{A} \to \dd^*, \alpha)$ and our choice of symplectic basis turning the nilpotent map $N$ into the form (\ref{mono_matrix}). 

The map $\mathbb{H} \to \Mat(g \times g,\cc)$ given by $u \mapsto -A u + \varOmega(u)$ descends via the universal covering map $\mathbb{H} \to \dd^*$ to give a map $\dd^* \to  \Mat(g \times g,\cc)$. By the Nilpotent Orbit Theorem \cite[Theorem~4.9]{schmid} this map extends to a holomorphic map $B \colon \dd \to  \Mat(g \times g,\cc)$. We conclude that the period map $\varOmega \colon \mathbb{H} \to   \mathbb{H}_g$  can be written as the multi-valued map
\begin{equation} \varOmega(t) = \frac{1}{2\pi i} A \log t + B(t)  \, , \quad t \in \dd^* \, ,
\end{equation}
with $A \in \Mat(g \times g,\zz)$ and $B \colon \dd \to  \Mat(g \times g,\cc)$ as above. 

For every $t \in \dd$ we denote by $B_{g-r,g-r}(t)$   the lower right $(g-r)\times(g-r)$ block of the matrix $B(t)$. We have the following: 
\begin{itemize}
\item[(a)] after possibly shrinking $\dd$ the map $B_{g-r,g-r} \colon \dd \to \Mat((g-r) \times (g-r),\cc)$ factors through the Siegel upper half space $\mathbb{H}_{g-r}$; 
\item[(b)] the matrix $B_{g-r,g-r}(0) \in \mathbb{H}_{g-r}$ is a period matrix of the principally polarized complex abelian variety $P=G/T$ that appears in (\ref{eqn:ses}).
\end{itemize}

\section{The $I$-invariant and its asymptotics} \label{sec:Iinv_asymp}

As was announced in the introduction, our proof of Theorem~\ref{lambda_asympt_intro} is based on an analysis of the asymptotics of the so-called $I$-invariant of principally polarized complex abelian varieties, introduced by P.\ Autissier in \cite{aut}. In the first section below we introduce the $I$-invariant, and we analyze its asymptotics in the follow-up sections.

\subsection{The $I$-invariant} \label{sec:Iinv}
 Let $A$ be a complex abelian variety, endowed with a principal polarization $a \colon A \isom A^t$.  Let $L$ be a symmetric ample line bundle on $A$ determining the given polarization and let $s$ be a non-zero global section of $L$. Equip $L$ with a cubical metric $\|\cdot\|$ (i.e., a smooth metric whose curvature form is translation-invariant) and let $\mu_H$ denote the Haar measure on~$A$, normalized to give $A$ volume one. The $I$-invariant of $(A,a)$  is then defined to be the real number
\begin{equation} \label{def:Iinv}
 I(A,a) = -\int_{A} \log \|s\| \, \d \, \mu_H + \frac{1}{2} \log \int_{A} \|s\|^2 \, \d \, \mu_H \, .
\end{equation}
It can be verified that the real number $I(A,a)$ is independent of the choice of the symmetric ample line bundle $L$, the global section $s$ and the cubical metric $\| \cdot \|$. The Jensen inequality implies that $I(A,a) > 0$.

The $I$-invariant (\ref{def:Iinv}) can be written more explicitly using Riemann's theta function. Set $g=\dim(A)$. We recall from \S\ref{sec:standard_form}  that we can think of $(A,a)$ as  the principally polarized complex abelian variety $A_\varOmega = \cc^g/(\zz^g + \varOmega \zz^g)$ for a suitable $\varOmega \in \mathbb{H}_g$. The Riemann theta function is the function
\begin{equation} \label{def:Riemann_theta}
 \theta(z,\varOmega) = \sum_{n \in \zz^g} \exp(\pi i n^t \varOmega n + 2\pi i n^t z) \, , \quad z \in \cc^g \,  . 
\end{equation}
We shall be working with the following normalized version (cf.\ \cite[p.~401]{fa})
\begin{equation} \label{norm_theta}
 \|\theta\|(z,\varOmega) = (\det \Im \varOmega)^{1/4} \exp(-\pi (\Im z)^t (\Im \varOmega)^{-1} (\Im z) ) | \theta(z,\varOmega) |\, .
 \end{equation}
It can be checked that the function $\|\theta\|(z,\varOmega)$ descends to the complex torus $A_\varOmega$. In fact we can view the function $\|\theta\|$ as giving the norm, in a suitable cubical metric, of the standard global section~$1$ of the standard symmetric ample line bundle $L=\oo(\divisor \theta)$ associated to the principal polarization of $A_\varOmega$.

 Let $\mu_H$ denote the Haar measure on $A_\varOmega$, normalized to give $A_\varOmega$ unit volume. Then by \cite[Proposition~8.5.6]{bila} we have
\begin{equation} \label{integral_constant}
 \int_{A_\varOmega} \|\theta\|^2 \, \d \, \mu_H = 2^{-g/2} \, . 
\end{equation}
From this we arrive at the explicit formula
\begin{equation} \label{def_arch_lambda_bis}
I(A_\varOmega) = -\int_{A_\varOmega} \log \|\theta\|  \, \d \, \mu_H - \frac{g}{4} \log 2 
\end{equation}
for the $I$-invariant of the principally polarized complex abelian variety $A_\varOmega$.

\subsection{Asymptotics of the $I$-invariant}

 As is suggested by \cite[Proposition~4.1]{aut},  the invariant $I \colon \mathcal{A}_g \to \rr_{>0}$ may be viewed as the minus logarithm of a distance to the boundary of the moduli space of principally polarized abelian varieties. The purpose of this section is  to make this idea more precise. 
 
 In fact, we determine the asymptotics of the $I$-invariant in arbitrary one-parameter degenerations with unipotent monodromy, see Theorem~\ref{I-asymptotic}. This result was recently also obtained independently by R.~Wilms \cite[Theorem~1.1]{wi_deg}. 
Our proof is slightly different from Wilms's and yields, as a by-product, a simple expression for the implied limiting value at zero -- see  \S\ref{sec:cst_term}.
 
Let $(f \colon \mathcal{A} \to \dd^*, \alpha)$ be a family of principally polarized complex  abelian varieties with unipotent monodromy around the origin. We recall that this is equivalent to saying that the family $f$ has semiabelian reduction over $\dd$. We let $\varSigma$ denote the  polarized real torus determined by the principally polarized tropical abelian variety associated to~$f$ as in \S\ref{sec:monodr_pairing}, and write  $I(\varSigma)$ for the tropical moment of $\varSigma$ as in \S\ref{sec:pptav}.

\begin{thm} \label{I-asymptotic}   Let $\varOmega(t)$ be the family of period matrices on $\dd^*$ determined by a symplectic basis of a fiber of $R^1f_* \zz_\mathcal{A}$. The asymptotics
\begin{equation} \label{eqn:I-asymp}
2\,  I(\mathcal{A}_t,\alpha_t) \sim  -I(\varSigma) \log|t| -  \frac{1}{2} \log \det \Im \varOmega(t)
\end{equation}
holds as $t \to 0$ over $\dd^*$. 
\end{thm}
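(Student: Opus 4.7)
The plan is to express $I(A_{\varOmega(t)})$ as a theta integral, apply the Nilpotent Orbit Theorem to identify the dominant Fourier mode of the theta sum, and recognize the $\log|t|$-coefficient as the tropical moment $I(\varSigma)$ via a Voronoi-cell calculation.

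First, completing the square in (\ref{def:Riemann_theta}) and substituting $z = m+\varOmega n$ with $(m,n) \in [0,1]^{2g}$ into (\ref{norm_theta}) yields the standard identity
\[
\|\theta\|^2(m+\varOmega n,\varOmega) = (\det\Im\varOmega)^{1/2}\,|\tilde\theta(m,n,\varOmega)|^2, \quad \tilde\theta(m,n,\varOmega) := \sum_{k\in\zz^g} e^{\pi i(k+n)^t\varOmega(k+n)+2\pi i k^t m}.
\]
Via (\ref{def_arch_lambda_bis}) the statement of Theorem~\ref{I-asymptotic} reduces to the asymptotic
\[
-\int_{[0,1]^{2g}}\log|\tilde\theta(m,n,\varOmega(t))|^2\,dm\,dn \;\sim\; -I(\varSigma)\log|t|.
\]

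Second, by the Nilpotent Orbit Theorem (\S\ref{sec:NilpOrb}), $\varOmega(t) = (A/2\pi i)\log t + B(t)$ with $A$ in the block form (\ref{mono_matrix}) and $B \colon \dd \to \Mat(g\times g,\cc)$ bounded holomorphic. Write all matrices in blocks of sizes $r, g-r$, so that $A_{11} = A_0$ and the remaining blocks of $A$ vanish; split $k = (k_1,k_2)$ and $n = (n_1,n_2)$ accordingly. For almost every $n_1 \in [0,1]^r$ there is a unique $\alpha(n_1) \in \zz^r$ with $\tilde n_1 := n_1+\alpha(n_1) \in \Vor(A_0)$; reindex $k_1 \mapsto k_1-\alpha(n_1)$ in $\tilde\theta$ (the extraneous phase $e^{2\pi i\alpha(n_1)^t m_1}$ disappears in the modulus). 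Using $\Im\varOmega_{11}(t) = -(A_0/2\pi)\log|t|+\Im B_{11}(t)$, the $k_1=0$ contribution $S_0(t)$ then satisfies
\[
\log|S_0(t)|^2 = \tilde n_1^t A_0\tilde n_1\,\log|t| + O(1),
\]
where the bounded remainder converges as $t\to 0$ to the log-modulus of a shifted theta series for the limit period matrix $B_{g-r,g-r}(0)$ of the abelian quotient $P = G/T$. The remaining summands of $\tilde\theta$ are smaller than $S_0$ by factors $|t|^\delta$ for $\delta = \delta(\tilde n_1)>0$, uniformly on compacta in the interior of $\Vor(A_0)$.

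Third, the map $n_1 \mapsto \tilde n_1$ is a measure-preserving bijection from $[0,1]^r$ onto $\Vor(A_0)$, both of unit Lebesgue volume as fundamental domains for the lattice $\zz^r$. Hence by Proposition~\ref{monodromy-Gram-matrix} and (\ref{formula-I}),
\[
\int_{[0,1]^r}\tilde n_1^t A_0\tilde n_1\,dn_1 \;=\; \int_{\Vor(A_0)}v^t A_0 v\,dv \;=\; I(A_0) \;=\; I(\varSigma).
\]
Integrating $\log|S_0|^2$ over the remaining $(m_1,m_2,n_2)$-variables produces the main term $-I(\varSigma)\log|t|$, with a residual constant term whose explicit form is extracted in \S\ref{sec:cst_term}.

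Fourth, the principal obstacle is showing that $\log|\tilde\theta|^2 - \log|S_0|^2$ integrates to a function of $t$ that extends continuously over $\dd$. The naive estimate $\tilde\theta/S_0 = 1 + O(|t|^\delta)$ breaks down near the theta divisor of the limit theta for $P$, where $|S_0|$ is small and the ``remainder'' Fourier modes become of comparable size. The remedy is a two-region decomposition of $(m,n) \in [0,1]^{2g}$: on the complement of a shrinking neighborhood of the theta divisor of $P$, dominated convergence applies and supplies the continuous limit; on the neighborhood itself, the standard logarithmic $L^1$-integrability of $\log|\theta|$ combined with a quantitative control of the zero locus shows that the contribution is $O(1)$ as $t\to 0$. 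This uniform integrability argument is the technical heart of the proof and, crucially, is what makes the limiting constant computable in closed form.
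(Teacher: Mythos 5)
Your outline is correct and arrives at the right main term: completing the square, shifting $n_1$ into $\Vor(A_0)$, identifying the $k_1=0$ mode, and recognizing $\int_{\Vor(A_0)} v^t A_0 v\,\d v = I(A_0) = I(\varSigma)$ via Proposition~\ref{monodromy-Gram-matrix} is exactly the mechanism the paper uses, though the paper organizes it differently. In the paper one first isolates the quadratic part $2\pi I_V(\Im\varOmega(t))$ via Lemma~\ref{rewrite_I_inv} and Lemma~\ref{eqn:relate_to_IV} (with $V=\Vor(A_0)\times\Vor(\Im B_{g-r,g-r}(0))$), so the asymptotics $-I(\varSigma)\log|t|$ comes from $I_V(A)=I(A_0)$ and the remainder is the genuine $\log|\theta|$-integral; you instead keep the quadratic factor inside $\tilde\theta$ and extract it from the dominant Fourier mode $S_0$. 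Both are fine, and the two parametrizations (your $[0,1]^{2g}$ with the translation $n_1\mapsto\tilde n_1$, versus the paper's direct use of the Voronoi fundamental domain) are equivalent.

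The real problem is your fourth step, which you correctly identify as the technical heart. What you need is that $\int\log|\tilde\theta/S_0|^2$, or equivalently the remainder $\int\log|\theta|$, depends continuously on $t$ up to $t=0$; $O(1)$ is not enough. The two difficulties are (i) the family of functions $\log|\tilde\theta(\cdot,\varOmega(t))|$ has a zero locus that \emph{moves} with $t$, so the ``standard logarithmic $L^1$-integrability'' of $\log|\theta|$ for a fixed theta does not by itself give a dominating function or the uniform integrability you invoke; and (ii) the estimate $\tilde\theta/S_0 = 1 + O(|t|^{\delta}/|\theta_{g-r}|)$ has $\delta=\delta(\tilde n_1)\to 0$ as $\tilde n_1\to\partial\Vor(A_0)$, where the dominant mode is no longer unique, and you do not say how to control the contribution from that shell. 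Both issues are resolved in the paper by one device: the theta series $\phi(\alpha,\beta,t)=\theta(\alpha+\varOmega(t)\beta,\varOmega(t))$ is shown (Proposition~\ref{asympt_log_theta}) to extend to a bounded function on $\left(-\tfrac12,\tfrac12\right)^g\times V^{\mathrm{int}}\times\dd$ that is harmonic, hence real-analytic, in the sense of Definition~\ref{def:analytic_harm}; then Lemma~\ref{continuity_fiber_integral} (Weierstrass Preparation for real-analytic functions plus continuity of polynomial roots) gives continuity of $t\mapsto\int\log|f(\cdot,t)|$ outright, with no region decomposition and no need for quantitative zero-locus estimates. If you want to complete your argument along the lines you propose, the ``quantitative control of the zero locus'' needs to be stated and proved; it is essentially Lemma~\ref{continuity_fiber_integral}, and at that point you would be reproving the paper's key lemma rather than bypassing it.
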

As we will see in Remark~\ref{main_term}, the term $-I(\varSigma) \log|t| $ is the main term in (\ref{eqn:I-asymp}) if the family does not have good reduction over $\dd$.

\subsection{Preliminary observations}
Before giving the proof of Theorem~\ref{I-asymptotic} we discuss some useful explicit formulas for computing the $I$-invariant of a principally polarized complex abelian variety. 
Let $g \in \zz_{\geq 0}$ and let $\varOmega \in \mathbb{H}_g$.  As in \S\ref{sec:standard_form} let $A_\varOmega$ denote the principally polarized complex abelian variety $\cc^g/(\zz^g + \varOmega \zz^g)$ and let $\theta(z,\varOmega)$ be the associated Riemann theta function on $\cc^g$.
\begin{lem} \label{rewrite_I_inv} Let $F \subset \cc^g$ be any fundamental domain for the lattice $\zz^g + \varOmega \zz^g$. Let $\mu_F$ denote the Lebesgue measure on $\cc^g$ giving $F$ volume one.
 The formula
\[  \begin{split} 2\, & I(A_\varOmega)   + \frac{g}{2} \log 2  + \frac{1}{2} \log \det \Im \varOmega \\
&  =  2\pi  \int_{ F }  (\Im z)^t (\Im \varOmega)^{-1} (\Im z) \, \d \, \mu_F(z)  - 2 \, \int_{F} \log |\theta(z,\varOmega)|  \, \d \, \mu_F(z) \end{split} \]
holds.
\end{lem}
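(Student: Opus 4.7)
The plan is to unwind the definitions and pass from integration over $A_\varOmega$ against the normalized Haar measure to integration over a fundamental domain. First I recall the explicit formula
\[
 I(A_\varOmega) = -\int_{A_\varOmega} \log \|\theta\|\, \d\, \mu_H - \frac{g}{4}\log 2
\]
from \eqref{def_arch_lambda_bis}, together with the definition \eqref{norm_theta} of $\|\theta\|$. Taking the logarithm of the latter yields, as a function on $\cc^g$, the pointwise identity
\[
 \log \|\theta\|(z,\varOmega) = \tfrac{1}{4}\log\det\Im\varOmega - \pi(\Im z)^t(\Im\varOmega)^{-1}(\Im z) + \log|\theta(z,\varOmega)| \, .
\]

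Next I would observe that the left-hand side is $\varLambda_\varOmega$-periodic, since $\|\theta\|$ descends to a smooth function on the torus $A_\varOmega$. Consequently, for any fundamental domain $F$ and the Lebesgue measure $\mu_F$ giving $F$ volume one, the integral of $\log \|\theta\|$ against $\mu_H$ on $A_\varOmega$ coincides with the integral of $\log\|\theta\|$ against $\mu_F$ on $F$. Individually, the three summands displayed above are \emph{not} lattice-invariant, but only their sum; nevertheless, integrating each summand over $F$ against $\mu_F$ is well-defined and the resulting identity
\[
 \int_F \log \|\theta\|\, \d\, \mu_F = \tfrac{1}{4}\log\det\Im\varOmega \,-\, \pi \int_F (\Im z)^t(\Im\varOmega)^{-1}(\Im z)\, \d\, \mu_F \,+\, \int_F \log|\theta(z,\varOmega)|\, \d\, \mu_F
\]
holds (using that $\mu_F(F)=1$ in the first term).

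Substituting this into the formula for $I(A_\varOmega)$ and multiplying by $2$ gives
\[
 2\, I(A_\varOmega) = -\tfrac{1}{2}\log\det\Im\varOmega + 2\pi\int_F (\Im z)^t(\Im\varOmega)^{-1}(\Im z)\, \d\, \mu_F - 2\int_F \log|\theta(z,\varOmega)|\, \d\, \mu_F - \tfrac{g}{2}\log 2,
\]
and rearranging the $\log\det\Im\varOmega$ and $\tfrac{g}{2}\log 2$ terms to the left-hand side yields exactly the claimed formula. There is no real obstacle here; the content is entirely bookkeeping, and the only point requiring attention is that the passage from $A_\varOmega$ to $F$ is legitimate because $\|\theta\|$ (and not its individual factors) is $\varLambda_\varOmega$-invariant.
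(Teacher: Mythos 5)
Your proposal is correct and follows essentially the same route as the paper's proof: both take the logarithm of the explicit expression for $\|\theta\|$, use the $\varLambda_\varOmega$-periodicity of $\log\|\theta\|$ (but not of its individual summands) to pass from $\mu_H$-integration over $A_\varOmega$ to $\mu_F$-integration over $F$, split the integral there, and rearrange. The only cosmetic difference is the order in which you add in the $\tfrac12\log\det\Im\varOmega$ term; the content is the same.
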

\begin{proof} 
 Let $\mu_H$ denote the Haar measure on $A_\varOmega$ giving $A_\varOmega$ volume one. Then by (\ref{norm_theta}) and (\ref{def_arch_lambda_bis}) we compute
\[ \begin{split} 
2\, I(A_\varOmega) & + \frac{g}{2} \log 2  + \frac{1}{2} \log \det \Im \varOmega \\
& = -2 \int_{A_\varOmega} \log \| \theta \| \, \d \, \mu_H + \frac{1}{2} \log \det \Im \varOmega \\
& = 2 \,  \int_{A_\varOmega} \left( \pi (\Im z)^t (\Im \varOmega)^{-1}  (\Im z) - \log |\theta(z,\varOmega) | \right) \, \d \, \mu_H(z) \\
& = 2\pi  \int_{ F }  (\Im z)^t (\Im \varOmega)^{-1} (\Im z) \, \d \, \mu_F(z) - 2 \int_{F } \log |\theta(z,\varOmega)|  \, \d \, \mu_F(z) 
 \, . \end{split} \]
The lemma follows.
\end{proof}
\begin{definition}
Let $W$ be any fundamental domain for the lattice $\zz^g$ in $\rr^g$.
We define the set
\begin{equation} \label{def_FE} F(W,\varOmega) = \left\{ \alpha + \varOmega \beta \, : \, \alpha \in \left[-\frac{1}{2},\frac{1}{2} \right]^g \, , \, \beta \in W \right\} \subset \cc^g \, . 
\end{equation}
It is easy to see that $F(W,\varOmega)$  is a fundamental domain for the lattice $\zz^g + \varOmega \zz^g$ in $\cc^g$.
\end{definition}
\begin{lem}  \label{eqn:relate_to_IV} 
Let $\mu_{W,\varOmega}$ denote the Lebesgue measure on $\cc^g$ giving $F(W,\varOmega)$ volume one.  The formula
\[ \int_{ F(W,\varOmega)}  (\Im z)^t (\Im \varOmega)^{-1} (\Im z) \, \d \, \mu_{W,\varOmega}(z) = I_W(\Im \varOmega) \]
holds. Here $I_W(\Im \varOmega)$ is defined as in (\ref{def:IV}). 
\end{lem}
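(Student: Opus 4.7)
The plan is to reduce the integral to one over $[-1/2,1/2]^g \times W$ by a direct change of variables. Given $(\alpha,\beta) \in [-1/2,1/2]^g \times W \subset \rr^g \times \rr^g$, set $z = \alpha + \varOmega\beta$; then $\Re z = \alpha + (\Re\varOmega)\beta$ and $\Im z = (\Im\varOmega)\beta$. The Jacobian matrix of the $\rr^{2g} \to \cc^g \cong \rr^{2g}$ map $(\alpha,\beta) \mapsto (\Re z, \Im z)$ is the block-triangular matrix with blocks $I_g$, $\Re\varOmega$, $0$, $\Im\varOmega$, so its Jacobian determinant equals $\det \Im\varOmega$.

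Next I would invoke the classical fact that $F(W,\varOmega)$, being a fundamental domain for the lattice $\zz^g + \varOmega\zz^g$ in $\cc^g$, has standard Lebesgue volume equal to $\det \Im\varOmega$ — this matches the Jacobian just computed, since $[-1/2,1/2]^g$ and $W$ each have unit volume in $\rr^g$. Consequently, the pull-back of the normalized measure $\mu_{W,\varOmega}$ under $(\alpha,\beta) \mapsto z$ is simply $d\alpha\, d\beta$ on $[-1/2,1/2]^g \times W$.

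To finish, substitute $\Im z = (\Im\varOmega)\beta$ in the integrand. Since $\Im\varOmega$ is symmetric, one finds
\[
(\Im z)^t (\Im\varOmega)^{-1} (\Im z) \;=\; \beta^t (\Im\varOmega)^t (\Im\varOmega)^{-1} (\Im\varOmega) \beta \;=\; \beta^t (\Im\varOmega)\beta,
\]
which is independent of $\alpha$. Performing the $\alpha$-integral over $[-1/2,1/2]^g$ (of unit volume) leaves exactly $\int_W \beta^t (\Im\varOmega)\beta\, d\beta = I_W(\Im\varOmega)$, by the definition in (\ref{def:IV}).

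I do not anticipate any real obstacle: the argument is entirely formal once the standard volume formula for $F(W,\varOmega)$ is in hand, which precisely cancels the Jacobian so that the normalized measure transports to the product Lebesgue measure on $[-1/2,1/2]^g \times W$.
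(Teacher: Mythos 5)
Your proof is correct and follows essentially the same route as the paper's: parametrize $F(W,\varOmega)$ by $(\alpha,\beta)\mapsto\alpha+\varOmega\beta$, observe that the normalized measure $\mu_{W,\varOmega}$ pulls back to Lebesgue measure on $[-\tfrac12,\tfrac12]^g\times W$, substitute $\Im z=(\Im\varOmega)\beta$, and integrate. You merely spell out explicitly (via the block-triangular Jacobian and the volume identity for $F(W,\varOmega)$) what the paper asserts in a single sentence.
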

\begin{proof} The measure $\mu_{W,\varOmega}$ is the pushforward of the standard Lebesgue measure on $\rr^g \times \rr^g$ onto $\cc^g$ along the $\rr$-linear isomorphism $(\alpha,\beta) \mapsto \alpha+\varOmega \beta$. This allows us to compute
\[ \begin{split}  \int_{ F(W,\varOmega)}  (\Im z)^t (\Im \varOmega)^{-1} (\Im z) \, \d \, \mu_{W,\varOmega}(z) & =
\int_{\alpha \in [-\frac{1}{2},\frac{1}{2}]^g } \int_{\beta \in W} \beta^t (\Im \varOmega) \beta \, \d \, \alpha \, \d \, \beta\\
& =  I_W(\Im \varOmega) \, . 
\end{split}  \]
The lemma follows.
\end{proof}

\subsection{Proof of Theorem~\ref{I-asymptotic}}
We now return to the setting of Theorem~\ref{I-asymptotic}. Thus,
we let $(f \colon \mathcal{A} \to \dd^*, \alpha)$ be a family of principally polarized complex abelian varieties over the punctured unit disk, with unipotent monodromy around the origin.  Let $r $ be the toric rank of the fiber at the origin of the family of semiabelian varieties determined by~$f$,   and let $\varSigma$ denote the polarized real torus determined by the principally polarized tropical abelian variety associated to~$f$.

As follows from  \S\ref{sec:NilpOrb} we may assume upon choosing a symplectic basis of a fiber of $R^1f_* \zz_\mathcal{A}$ that the $\mathcal{A}_t$ are given as the principally polarized abelian varieties $A_{\varOmega(t)}$ with
\begin{equation} \label{family_bis} \varOmega(t) = \frac{1}{2\pi i} A \log t + B(t)  \, , \quad t \in \dd^* \, , 
\end{equation}
where $B \colon \dd \to \Mat(g \times g,\cc)$ is a bounded holomorphic map and where $A$ is a symmetric integral $g\times g$ matrix in block form
\[ A = \begin{pmatrix}[c|c] A_0 & 0 \\
\hline 
0 & 0 \\
\end{pmatrix}  \, , \]
with $A_0$ positive definite and of size $r\times r$.  

For every $t \in \dd$ we denote by $B_{g-r,g-r}(t)$   the lower right $(g-r)\times(g-r)$ block of the matrix $B(t)$. By item (b) from \S\ref{sec:NilpOrb} we have that $B_{g-r,g-r}(0) \in \mathbb{H}_{g-r}$. 

\begin{definition}
We define
 \[ V =  \Vor(A_0) \times \Vor(\Im B_{g-r,g-r}(0)) \subset \rr^r \times \rr^{g-r} = \rr^g \, . \] 
It follows from Lemma~\ref{IVA_IVQ0}  that $V$ is a fundamental domain for the lattice~$\zz^g$ in~$\rr^g$.
\end{definition}

The key to our proof of Theorem~\ref{I-asymptotic} is the following proposition.
\begin{prop} \label{asympt_log_theta} Let $\varOmega(t)$ be as in (\ref{family_bis}). 
The fiber integral
\begin{equation} \label{is_continuous}
 \int_{F(V,\varOmega(t))} \log \left| \theta(z,\varOmega(t)) \right| \, \d \, \mu_{V,\varOmega(t)}(z)  
\end{equation}
extends continuously over $\dd$. Here $F(V,\varOmega(t))$ is defined as in (\ref{def_FE}), and $\mu_{V,\varOmega(t)}$ is the Lebesgue measure on $\cc^g$ giving $F(V,\varOmega(t))$ volume one.
\end{prop}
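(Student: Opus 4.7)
The plan is to parametrize the integral over a fixed real domain and then analyze the pointwise asymptotics of the integrand via the Voronoi geometry of $A_0$. The real-linear map $(\alpha,\beta) \mapsto z = \alpha + \varOmega(t)\beta$ carries $[-\frac{1}{2},\frac{1}{2}]^g \times V$ bijectively onto $F(V,\varOmega(t))$ with Jacobian $|\det \Im \varOmega(t)|$; since this also equals the Lebesgue volume of $F(V,\varOmega(t))$ (as $V$ has volume one), the Jacobian cancels the normalization of $\mu_{V,\varOmega(t)}$, and we obtain
\[
J(t) := \int_{F(V,\varOmega(t))} \log|\theta(z,\varOmega(t))|\,d\mu_{V,\varOmega(t)}(z) = \int_{[-\frac{1}{2},\frac{1}{2}]^g \times V} \log\bigl|\theta(\alpha + \varOmega(t)\beta, \varOmega(t))\bigr|\,d\alpha\,d\beta \, .
\]
Continuity of $J$ on $\dd^*$ is standard (smoothness of the integrand in $t$ combined with local $L^1$-integrability of $\log|\theta|$), so the real task is continuity at $t = 0$.

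At the level of the theta series, I would isolate the contribution of the degenerating directions by decomposing $n = (n_1, n_2) \in \zz^r \times \zz^{g-r}$, which gives
\[
\theta(z,\varOmega(t)) \;=\; \sum_{n_1 \in \zz^r} e^{\pi i n_1^t \varOmega_{11}(t) n_1 + 2\pi i n_1^t z_1}\, \theta\bigl(z_2 + \varOmega_{12}(t)^t n_1,\, \varOmega_{22}(t)\bigr) \, ,
\]
where $\varOmega_{11}, \varOmega_{12}, \varOmega_{22}$ denote the obvious blocks of $\varOmega$ and $z = (z_1, z_2)$ is the corresponding decomposition. Writing $\varOmega_{11}(t) = \frac{A_0}{2\pi i}\log t + B_{11}(t)$ for the top-left block, a direct computation shows that at $z = \alpha + \varOmega(t)\beta$ the modulus of the $n_1$-th summand equals $|t|^{\frac{1}{2} n_1^t A_0 n_1 + n_1^t A_0 \beta_1}$ times factors bounded uniformly in $t$. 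The defining property of the Voronoi cell gives $|n_1^t A_0 \beta_1| \le \frac{1}{2} n_1^t A_0 n_1$ for $\beta_1 \in \Vor(A_0)$, with strict inequality in the interior for every $n_1 \ne 0$. Hence for almost every $\beta \in V$ all terms with $n_1 \ne 0$ decay as a positive power of $|t|$; combined with $\varOmega_{22}(t) = B_{g-r,g-r}(t) \to B_{g-r,g-r}(0) \in \mathbb{H}_{g-r}$ (see \S\ref{sec:NilpOrb}), this yields the pointwise convergence
\[
\theta(\alpha + \varOmega(t)\beta, \varOmega(t)) \longrightarrow \theta\bigl(\alpha_2 + B_{12}(0)^t\beta_1 + B_{g-r,g-r}(0)\beta_2,\, B_{g-r,g-r}(0)\bigr) \, ,
\]
i.e., the theta function of the limiting principally polarized abelian variety $P = G/T$ from \eqref{eqn:ses}, evaluated at the obvious point.

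To pass to the limit, I would invoke Lebesgue's dominated convergence theorem. The main obstacle is constructing an integrable majorant for $\bigl|\log|\theta(\alpha + \varOmega(t)\beta, \varOmega(t))|\bigr|$ valid uniformly in $t$ near zero. An upper bound follows from the triangle inequality applied termwise to the theta series, using the bound on the individual summands obtained above. The delicate part is the lower bound, controlling the logarithmic singularities of $\theta$ along its divisor uniformly as $\varOmega(t)$ degenerates. I would approach this by passing to the normalized theta via
\[
\log|\theta(z,\varOmega)| = \log\|\theta\|(z,\varOmega) - \frac{1}{4}\log\det\Im\varOmega + \pi\,(\Im z)^t(\Im\varOmega)^{-1}(\Im z) \, ,
\]
exploiting that $\|\theta\|^2$ is a bounded, nonnegative smooth function on $A_{\varOmega(t)}$ with controlled $L^2$-mass from \eqref{integral_constant}, and combining this with the explicit control on $\det \Im\varOmega(t)$ and the quadratic form $\beta^t \Im\varOmega(t)\beta$ coming from the block form of $A$ (cf.\ Lemma~\ref{IVA_IVQ0}), to show that the theta divisor on $A_{\varOmega(t)}$ approaches that on $P$ in a way that keeps the singularities uniformly integrable.
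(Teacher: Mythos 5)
Your decomposition of the theta series, the resulting $t$-power decay of the $n_1\ne 0$ terms via the Voronoi inequality, and the identification of the pointwise limit as the theta function of $P=G/T$ are all in line with the paper's proof. The genuine gap is at the passage to the limit. You propose dominated convergence and identify the construction of an integrable majorant for $\bigl|\log|\theta(\alpha+\varOmega(t)\beta,\varOmega(t))|\bigr|$ as ``the delicate part,'' but the plan you sketch for the lower bound does not close it: writing $\log|\theta| = \log\|\theta\| - \tfrac14\log\det\Im\varOmega + \pi(\Im z)^t(\Im\varOmega)^{-1}(\Im z)$ and invoking \eqref{integral_constant} only gives an upper bound and $L^2$-type mass control on $\|\theta\|$; it gives no lower bound on $\log\|\theta\|$ near the (moving) theta divisor, which is exactly where an integrable minorant is needed. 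As written, your argument proves pointwise a.e.\ convergence but does not establish that the family $\{\log|\theta(\cdot,\varOmega(t))|\}_t$ is uniformly integrable near $t=0$, so continuity of $J$ at the origin does not follow.

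The paper avoids the dominating-function problem entirely. Instead of DCT, it proves that the integrand $f_0(\alpha,\beta,t)=\theta(\alpha+\varOmega(t)\beta,\varOmega(t))$ (not its log, and not merely its pointwise limit) extends to a \emph{bounded analytic (harmonic)} function $f$ on $\bigl(-\tfrac12,\tfrac12\bigr)^g\times V^{\mathrm{int}}\times\dd$, using the same series estimate you derive plus a removable-singularity result for bounded harmonic functions (Lemma~\ref{generalities}), and that $f(\cdot,\cdot,0)\not\equiv 0$. It then invokes Lemma~\ref{continuity_fiber_integral}, a continuity result for fiber integrals $\int\log|f(x,t)|\,dx$ with $f$ bounded analytic and not identically zero on fibers, whose proof rests on the Weierstrass Preparation Theorem for real-analytic functions and continuity of polynomial roots. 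That lemma absorbs the logarithmic singularities structurally, with no need for a dominating function. To repair your argument you would either have to establish that uniform integrability directly (which your sketch does not do), or, as the paper does, upgrade your pointwise convergence to a bounded analytic extension across $t=0$ and appeal to a fiber-integral continuity lemma of Weierstrass-preparation type.
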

Before giving the proof of Proposition~\ref{asympt_log_theta} we first examine the result in the case that $g=1$, and show how Theorem~\ref{I-asymptotic} follows from the proposition.
\begin{example}
Assume $g=1$. For $\varOmega \in \mathbb{H}$ and $z \in \cc$ we set $t=\exp(2\pi i \varOmega)$ and $w=\exp(2\pi i z)$. The Riemann theta function admits a product expansion
\begin{equation} \label{product_exp}
 \theta(z,\varOmega) = \prod_{k=1}^\infty (1-t^k) \prod_{k=0}^\infty (1+t^{k+1/2}w^{-1}) (1+ t^{k+1/2} w) \, . 
\end{equation}
We have $V = \left[-\frac{1}{2},\frac{1}{2} \right] \subset \rr$. A computation based on (\ref{product_exp}) gives
\[ \int_{F(V,\varOmega)} \log |\theta(z,\varOmega) | \, \d \, \mu_{V,\varOmega}(z) = \log \left|  \prod_{k=1}^\infty (1-t^k) \right| \, . \]
The right hand side clearly extends continuously over $t=0$. 
\end{example}
Assuming Proposition~\ref{asympt_log_theta}, Theorem~\ref{I-asymptotic} can be proved as follows. 
\begin{proof}[Proof of Theorem~\ref{I-asymptotic}]  Combining Lemma~\ref{rewrite_I_inv} and Lemma~\ref{eqn:relate_to_IV} we have\begin{equation} \label{proof_I} \begin{split} 2\, I(A_{\varOmega(t)} )  & + \frac{g}{2} \log 2  + \frac{1}{2} \log \det \Im \varOmega(t) \\
& =  2\pi \,  \int_{ F(V,\varOmega(t))}  (\Im z)^t (\Im \varOmega(t))^{-1} (\Im z) \, \d \, \mu_{V,\varOmega(t)}(z) \\
 & \hspace{1cm} - 2 \int_{F(V,\varOmega(t))} \log |\theta(z,\varOmega(t))|  \, \d \, \mu_{V,\varOmega(t)}(z) \\ 
& = 2\pi\, I_V(\Im \varOmega(t)) - 2 \int_{F(V,\varOmega(t))} \log |\theta(z,\varOmega(t))|  \, \d \, \mu_{V,\varOmega(t)}(z) \, . \end{split} 
\end{equation}
As the map $B \colon \dd \to \Mat(g \times g,\cc)$ is bounded and holomorphic, we have
\begin{equation}  \begin{split}  2\pi\, I_V(\Im \varOmega(t) ) & = - I_V(A) \log|t| + 2\pi  \,  I_V( \Im B(t))  \\
 & \sim - I_V(A) \log|t| \, . \end{split} 
\end{equation}
By Lemma~\ref{IVA_IVQ0} and Proposition~\ref{monodromy-Gram-matrix} 
we have
\begin{equation}
 I_V(A)  = I(A_0) = I(\varSigma) \, . 
\end{equation}
We conclude that
\begin{equation} \label{proof_III} 2\pi\, I_V(\Im \varOmega(t) ) \sim -I(\varSigma) \log|t| \, . 
\end{equation}
By Proposition~\ref{asympt_log_theta} we have that 
\begin{equation} \label{proof_II} \int_{F(V,\varOmega(t))} \log |\theta(z,\varOmega(t))|  \, \d \, \mu_{V,\varOmega(t)}(z)  \sim 0  \, .
\end{equation}
Combining (\ref{proof_I}),  (\ref{proof_III}) and (\ref{proof_II}) gives that
\[ 2\,  I(A_{\varOmega(t)}) \sim  -I(\varSigma) \log|t| -  \frac{1}{2} \log \det \Im \varOmega(t) \, , \]
and the theorem is proven.
\end{proof}
\begin{remark} \label{main_term} From (\ref{family_bis}) we obtain that $\det \Im \varOmega(t) \sim c \cdot (-\log|t|)^r$ for a suitable real number $c>0$. This gives 
\[ 2\,  I(A_{\varOmega(t)}) \sim  -I(\varSigma) \log|t| -  \frac{r}{2} \log (- \log |t|) \, , \]
showing that $-I(\varSigma) \log|t| $ is the main term of the asymptotics if $r>0$.
\end{remark}

\subsection{Proof of Proposition~\ref{asympt_log_theta}}
We will deduce Proposition~\ref{asympt_log_theta}  from the following  lemmas.

\begin{definition} \label{def:analytic_harm} For $U \subset \rr^n$ open we call $f \colon U \to \cc$ \emph{analytic} (resp.\ \emph{harmonic}) if both the real and imaginary part of $f$ are real analytic (resp.\ harmonic).  
\end{definition}

\begin{lem} \label{generalities} Let $U \subset \rr^n$ be an open set. (i) Each harmonic function on $U$ is analytic. 
(ii) Let $D \subset U$ be a closed polar set. Then each harmonic function on $U \setminus D$ which is locally bounded on $U $ extends to a harmonic function on $U$.
\end{lem}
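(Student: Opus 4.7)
Both statements are classical results in potential theory; I would sketch how each reduces to standard arguments.

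For (i), the idea is to exploit the Poisson integral representation. Real analyticity being local, fix $x_0 \in U$ and a closed ball $\overline{B(x_0,r)} \subset U$. Working with the real and imaginary parts separately, one may assume $f$ is real-valued and harmonic. On $B(x_0, r)$ the function $f$ is recovered as the Poisson integral of its continuous restriction to $\partial B(x_0, r)$. For each fixed $y \in \partial B(x_0, r)$, the Poisson kernel $P(x,y) = c_n(r^2-|x-x_0|^2)/|x-y|^n$ is a real-analytic function of $x$ on the open ball, with a Taylor expansion around $x_0$ that converges uniformly on compacta in $B(x_0, r) \times \partial B(x_0, r)$. Integrating term by term in $y$ yields a convergent power-series representation of $f$ near $x_0$, which is exactly real analyticity.

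For (ii), I would run the standard removable-singularity argument. Fix $x_0 \in D$ and an open ball $B = B(x_0, \rho)$ with $\overline{B} \subset U$ on which $f$ is bounded, say $|f| \leq M$. Since $D$ is polar, there exists a nonnegative superharmonic function $s$ defined near $\overline{B}$ with $s \equiv +\infty$ on $D \cap \overline{B}$ and $s$ finite (and locally integrable) elsewhere. A Fubini argument lets one shrink $\rho$ so that $\partial B$ meets $D$ in a set of surface measure zero; then $f|_{\partial B}$ is a bounded Borel function defined almost everywhere, and its Poisson integral defines a harmonic function $h$ on $B$. Set $w = h - f$ on $B \setminus D$; this is bounded and harmonic, with vanishing nontangential boundary values on $\partial B \setminus D$. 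For each $\epsilon > 0$ the function $w - \epsilon s$ is subharmonic on $B \setminus D$, has nonpositive boundary limits on $\partial B \setminus D$, and tends to $-\infty$ at every point of $D \cap B$. The maximum principle on $B \setminus D$ then forces $w - \epsilon s \leq 0$ throughout; letting $\epsilon \downarrow 0$ gives $w \leq 0$, and the symmetric argument applied to $-w$ gives $w \geq 0$. Hence $h$ extends $f$ harmonically across $D \cap B$, and patching these local extensions yields a global extension to $U$. Uniqueness is automatic, since $U \setminus D$ is open and dense.

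The main subtlety lies in part (ii): one must control the behavior of $f$ at points of $\partial B \cap D$ when forming the Poisson integral, and must apply the maximum principle carefully across the polar set. The barrier $\epsilon s$ handles both: it renders the contribution of $D$ harmless in the maximum principle, and it is the only step where the polarity hypothesis (rather than, say, being a mere Lebesgue null set) is essentially used.
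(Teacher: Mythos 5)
Your sketch is correct and reproduces the standard arguments (Poisson-kernel power-series expansion for real analyticity, and the barrier/maximum-principle argument for removable singularities across polar sets), applied to the real and imaginary parts separately. The paper's proof does not argue directly: it simply reduces to the real-valued case and cites Axler--Bourdon--Ramey, Theorem~1.28, for part (i) and Brelot, Chapter~3, \S2, for part (ii) --- precisely the results your sketch unpacks.
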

\begin{proof} Both statements follow directly from the corresponding properties of real-valued harmonic functions. For (i) we refer to  
 \cite[Theorem~1.28]{hft}, and for (ii) we refer to \cite[Chapter 3, \S2]{brelot}.
\end{proof}

The next lemma can be proved using the Weierstrass Preparation Theorem for real analytic functions \cite[Theorem~6.1.3]{primer} and continuity of the roots of a polynomial. 
We refer to \cite[Lemma~1.5.3]{bgs} and \cite[Lemma~6.6]{we} for similar statements in a holomorphic setting. 
\begin{lem} \label{continuity_fiber_integral} Let $U \subset \rr^n$ be an open set and let $M \subset \rr^m $ be a bounded open set.
Let $f \colon M \times U \to \cc$ be a bounded analytic  function in the sense of Definition~\ref{def:analytic_harm}. 
 Let $ \mu_L$ denote the standard Lebesgue measure on $\rr^m$. Assume that, for all $t \in U$, the function $f(x,t)$ is not identically zero in $x \in M$. The fiber integral
\[ \int_{x \in M} \log | f(x,t) | \, \d \, \mu_L(x) \]
is a continuous function of $t \in U$. 
\end{lem}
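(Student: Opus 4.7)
The plan is to reduce the statement to a local statement via a partition of unity argument, and then apply the Weierstrass Preparation Theorem to handle the integrable singularities of $\log|f|$ along the zero set of $f$.

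Fix $t_0 \in U$. Since $M$ is bounded, $\overline{M}$ is compact, so by a standard partition of unity it suffices to show that for each $x_0 \in \overline{M}$ there exists an open neighborhood $K$ of $x_0$ in $\rr^m$ and an open neighborhood $U_0$ of $t_0$ in $U$ such that the fiber integral $t \mapsto \int_{M \cap K} \log|f(x,t)|\, \d\mu_L(x)$ is continuous on $U_0$. At points $(x_0,t_0)$ with $f(x_0,t_0) \neq 0$ this is immediate: by continuity of $f$, there is a neighborhood of $(x_0, t_0)$ on which $|f|$ is bounded away from $0$ and from above, so $\log|f|$ is continuous and bounded there, and the dominated convergence theorem applies.

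The main case is therefore $f(x_0,t_0) = 0$. Since $f(\,\cdot\,,t_0)$ is analytic and, by hypothesis, not identically zero on the connected component of $M$ containing $x_0$, its zero set is nowhere dense; after a linear change of coordinates in $\rr^m$ we may assume that $f$ is $x_m$-regular at $(x_0,t_0)$, i.e.\ $s \mapsto f(x_0 + se_m, t_0)$ does not vanish identically in $s$ near $0$. The Weierstrass Preparation Theorem for real analytic functions (see \cite[Theorem~6.1.3]{primer}) then yields a neighborhood $K' \times U_0$ of $(x_0,t_0)$ and a factorization
\begin{equation*}
f(x,t) = u(x,t) \, P(x,t), \qquad P(x,t) = x_m^d + \sum_{j=0}^{d-1} a_j(x',t)\, x_m^j,
\end{equation*}
with $u$ real analytic and nowhere vanishing on $K' \times U_0$, the $a_j$ real analytic on a neighborhood of $(x_0',t_0)$, and $x' = (x_1,\ldots,x_{m-1})$. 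Writing $\log|f| = \log|u| + \log|P|$, the $\log|u|$-contribution is continuous and bounded, so its fiber integral is continuous by dominated convergence; the question reduces to the continuity of $t \mapsto \int_{K'\cap M} \log|P(x,t)|\, \d\mu_L(x)$.

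For this last step, factor $P(x,t) = \prod_{j=1}^{d} \bigl( x_m - \alpha_j(x',t) \bigr)$ over $\cc$. After permuting the $\alpha_j$ if necessary, continuity of the roots of a monic polynomial in terms of its coefficients (as an unordered multiset, or locally as a continuous selection) makes $(x',t) \mapsto (\alpha_1(x',t),\ldots,\alpha_d(x',t))$ a continuous map into $\cc^d/\mathfrak{S}_d$, and the roots remain bounded on a relatively compact neighborhood. Then
\begin{equation*}
\int_{K' \cap M} \log|P(x,t)|\, \d\mu_L(x) = \sum_{j=1}^d \int_{K'\cap M} \log\bigl|x_m - \alpha_j(x',t)\bigr|\, \d\mu_L(x),
\end{equation*}
and by Fubini each summand becomes $\int \bigl( \int \log|x_m - \alpha_j(x',t)|\, \d x_m \bigr)\, \d x'$. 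The inner integral is the integral of $\log|s-\alpha|$ over a bounded interval, which depends continuously on the complex number $\alpha$ (standard, via explicit antiderivative or dominated convergence using the locally uniform $L^1$-bound $|\log|s-\alpha|| \leq |\log|s-\alpha_0|| + C$ for $\alpha$ near $\alpha_0$), while $\alpha_j(x',t)$ depends continuously on $(x',t)$; a further application of dominated convergence (with a uniform $L^1$-dominant of the form $C + \sum_j |\log|x_m-\alpha_j(x',t_0)||$) yields continuity in $t$. The main obstacle is this last uniform integrable domination, which is exactly where the boundedness of the coefficients of $P$ (and hence of the roots $\alpha_j$) on a neighborhood of $(x_0',t_0)$ is essential.
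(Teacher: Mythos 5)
Your overall strategy coincides with the one the paper explicitly hints at (and does not elaborate): localize, invoke Weierstrass Preparation for real analytic functions, factor the distinguished polynomial, and use continuity of the roots. So in spirit you have reconstructed the intended argument. However, there are two points worth flagging.

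First, a genuine gap at the boundary of $M$. You reduce to showing continuity of $\int_{M\cap K}\log|f|\,\d\mu_L$ for $K$ a neighborhood of an arbitrary $x_0\in\overline M$, and then split into the cases $f(x_0,t_0)\ne 0$ and $f(x_0,t_0)=0$. For $x_0\in\partial M$ the value $f(x_0,t_0)$ is undefined and, more to the point, $f$ is not analytic on a neighborhood of $(x_0,t_0)$, so Weierstrass Preparation cannot be applied there. This is not merely cosmetic: with the hypotheses exactly as written the lemma is false. Take $M=(0,1)$, $U=\rr$ and $f(x,t)=\exp(-t^2/x)$. Then $f$ is real analytic on $M\times U$, $|f|\le 1$, $f(\cdot,t)$ never vanishes, yet $\int_0^1\log|f(x,t)|\,\d x$ equals $0$ at $t=0$ and $-\infty$ for $t\ne 0$. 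The implicit (and in the application to Proposition~\ref{asympt_log_theta} actually satisfied) hypothesis is that $f$ extends to a bounded analytic function on an open neighborhood of $\overline M\times U$; under that extra hypothesis your argument covers $\partial M$ and goes through. You should make this hypothesis explicit rather than glossing over the case $x_0\in\partial M$.

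Second, a small inaccuracy in the final dominated-convergence step. Neither of the stated $L^1$-dominants actually dominates pointwise: $\bigl|\log|s-\alpha|\bigr|\le\bigl|\log|s-\alpha_0|\bigr|+C$ fails for $s$ near $\alpha$ when $\alpha\ne\alpha_0$, and your proposed dominant for the outer integral is a function of $x_m$ rather than of $x'$. What you actually need is cleaner: the inner map $\alpha\mapsto\int_a^b\log|s-\alpha|\,\d s$ is continuous (by explicit primitive or by the elementary fact that $\log|s-\alpha|\to\log|s-\alpha_0|$ in $L^1_{\mathrm{loc}}(\d s)$), so $h(x',t)=\sum_j\int\log|x_m-\alpha_j(x',t)|\,\d x_m$ is continuous on $\overline{K'}\times\overline{U_0}$, hence bounded there by compactness; a constant dominant then suffices for the outer integral. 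With those two repairs the proof matches the paper's intended route.
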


\begin{proof}[Proof of Proposition~\ref{asympt_log_theta}]   
For $(\alpha, \beta) \in \rr^g \times \rr^g$ and $t \in \dd^*$ we define
\[ \phi(\alpha,\beta,t) = \theta(\alpha+ \varOmega(t) \beta,\varOmega(t)) \, . \]
Since $\theta$ and $\varOmega$ are holomorphic functions we find that $\phi$ is harmonic in the sense of Definition~\ref{def:analytic_harm}. 
The measure $\mu_{V,\varOmega(t)}$ is the pushforward of the standard Lebesgue measure on $\rr^g \times \rr^g$ onto $\cc^g$ along the $\rr$-linear isomorphism $(\alpha,\beta) \mapsto \alpha+\varOmega(t) \beta$, and for each $t \in \dd^*$ the integral in  (\ref{is_continuous}) can be performed over the interior of $F(V,\varOmega(t))$ as well. Thus, letting $V^{\mathrm{int}}$ denote the interior of $V$, and letting $f_0$ denote the restriction of $\phi$ to $ \left(-\frac{1}{2},\frac{1}{2} \right)^g \times V^{\mathrm{int}} \times \dd^*$ it follows that for $t \in \dd^*$ the integral in (\ref{is_continuous}) is equal to the integral
 \[ \int_{(\alpha, \beta) \in \left(-\frac{1}{2},\frac{1}{2} \right)^g \times V^{\mathrm{int}}} \log |f_0(\alpha,\beta,t)| \, \d \, \alpha \, \d \, \beta \, . \]
In order to prove Proposition~\ref{asympt_log_theta} it suffices, by Lemma~\ref{continuity_fiber_integral}, to show that: 
\begin{itemize}
\item[(i)] $f_0$ extends to a bounded analytic function $f$ over $  \left(-\frac{1}{2},\frac{1}{2} \right)^g \times V^{\mathrm{int}} \times \dd$; 
\item[(ii)] the restriction of $f$ to the fiber at the origin is not identically zero. 
\end{itemize}
For a vector $n \in \rr^g$ we shall denote by $n^{(r)} \in \rr^r$ the first~$r$ entries, and by $n^{(g-r)} \in \rr^{g-r}$ the last $g-r$ entries.

\medskip

\noindent \emph{Proof of item (i).} 
Using (\ref{family_bis}) and the definition of the Riemann theta function in~(\ref{def:Riemann_theta}) we find for $f_0(\alpha,\beta,t)$ the formal series expansion
\begin{equation} \label{series_f}
  \sum_{n^{(r)} \in \zz^r} t^{\frac{1}{2} n^{(r),t} A_0 n^{(r)} + n^{(r),t} A_0 \beta^{(r)} } \sum_{n^{(g-r)} \in \zz^{g-r}} \exp( \pi i  n^t B(t) n + 2 \pi i  n^t  (\alpha + B(t) \beta ) )   .
  \end{equation}
By (\ref{when_in_Vor}) we see that for all $\beta \in V^{\mathrm{int}}$ and $n \in \zz^g$ the exponent $\frac{1}{2} n^{(r),t} A_0   n^{(r)} + n^{(r),t}  A_0 \beta^{(r)} $ of the variable~$t$ is non-negative. 
By item (a) from \S\ref{sec:NilpOrb} we may assume that  there exists $c>0$ such that $\Im B_{g-r,g-r}(t) > c \cdot \Id_{g-r,g-r}$ for $t \in \dd$. Thus there exists $C \in \rr$ such that on $\left(-\frac{1}{2},\frac{1}{2} \right)^g \times V^{\mathrm{int}} \times \dd$ the estimate
\[ \left| \sum_{n^{(g-r)} \in \zz^{g-r}} \exp( \pi i \, n^t  B(t)   n + 2 \pi i \, n^t  (\alpha + B(t)   \beta ) )  \right| \leq \exp( C  \| n^{(r)} \|^2) \]
holds. By this estimate we may conclude that the series (\ref{series_f}) converges absolutely at each point of $\left(-\frac{1}{2},\frac{1}{2} \right)^g \times V^{\mathrm{int}} \times \dd$. We see that the function $f_0$ is  bounded on $\left(-\frac{1}{2},\frac{1}{2} \right)^g \times V^{\mathrm{int}} \times \dd^*$ and extends to a bounded function $f$ on $\left(-\frac{1}{2},\frac{1}{2} \right)^g \times V^{\mathrm{int}} \times \dd$. As $f_0$ is harmonic, by Lemma~\ref{generalities}(ii) we find that the function $f$ is harmonic. As harmonic functions are analytic by Lemma~\ref{generalities}(i) we are done.

\medskip

\noindent \emph{Proof of item (ii).} The restriction of $f$ to the fiber at the origin is given by (\ref{series_f}), specialized to $t=0$. For $\beta \in V^{\mathrm{int}}$ and $n^{(r)} \in \zz^r $ we have $\frac{1}{2} n^{(r),t} A_0  n^{(r)} + n^{(r),t}  A_0   \beta^{(r)} > 0$ unless $n^{(r)}=0$.  Taking only the contributions with $n^{(r)} = 0$ in the series (\ref{series_f}) we see that the restriction of $f$ to the fiber at the origin equals 
\[ \begin{split} \sum_{n^{(g-r)} \in \zz^{g-r}} \exp( \pi i \, n^{(g-r),t}   & B_{g-r,g-r}(0)  n^{(g-r)} + 2 \pi i \, n^{(g-r),t}   (\alpha + B(0) \beta )^{(g-r)} ) \\ 
& =  \theta( (\alpha + B(0)\beta)^{(g-r)}, B_{g-r,g-r}(0) ) \, . 
\end{split} \]
This is not identically zero.
\end{proof}

\subsection{The constant term in the asymptotics of the $I$-invariant} \label{sec:cst_term}
We finish this section by making the constant term implied by Theorem~\ref{I-asymptotic} explicit. We think that our explicit expression for the constant term may have further applications. The end result is written in (\ref{cst_term}). The right hand side of (\ref{cst_term}) is manifestly a function of the limit mixed Hodge structure associated to $f$, together with its monodromy action. We refer to \cite[Theorem~76]{bp} for a similar result in the setting of limits of heights of biextensions.

We will write $Q(t) = 2\pi \, \Im B(t)$. Let $P$ be the principally polarized complex abelian variety of dimension~$g-r$ arising from the $\Gr_{-1}$ of the limit mixed Hodge structure associated to $f$. By item (b) from \S\ref{sec:NilpOrb} the matrix $B_{g-r,g-r}(0)$ is a period matrix of $P$.
 Looking at the various steps in the proof of Theorem~\ref{I-asymptotic} we find
\[ \begin{split} \lim_{t \to 0} & \left[   2\,  I(\mathcal{A}_t,\alpha_t) +  I(\varSigma) \log|t| +  \frac{1}{2} \log \det \Im \varOmega(t)
  \right] \\ 
  & = -  \frac{g}{2}\log 2 + I_V(Q(0)) \\ 
  & \hspace{1cm} - 2 \int_{\left[-\frac{1}{2},\frac{1}{2} \right]^g} \int_V \log| \theta( (\alpha + B(0)  \beta)^{(g-r)},B_{g-r,g-r}(0))| \, \d \, \alpha \, \d \, \beta \, . 
  \end{split} \]
Using Lemma~\ref{IVA_IVQ0} we can rewrite the right hand side as
\[ \begin{split}
   -  \frac{g}{2}& \log 2  + I(Q_{g-r,g-r}(0)) + I_{\Vor(A_0)}\left( Q_{r,r}(0)  \right) \\
  & \hspace{1cm} - 2 \int_{\left[-\frac{1}{2},\frac{1}{2} \right]^g} \int_V \log| \theta( (\alpha + B(0)  \beta)^{(g-r)},B_{g-r,g-r}(0))| \, \d \, \alpha \, \d \, \beta \, . \end{split} \]
 We note that for $\alpha, \beta \in \rr^g$
\[ (\alpha + B(0)  \beta)^{(g-r)} = \alpha^{(g-r)} + B_{g-r,r}(0)  \beta^{(r)} + B_{g-r,g-r}(0)  \beta^{(g-r)} \, . \] 
The double integral can therefore be written as a triple integral 
\[ \begin{split}  \int_{\left[-\frac{1}{2},\frac{1}{2} \right]^{g-r}} & \int_{\Vor(A_0)}  \int_{ \Vor(Q_{g-r,g-r}(0))} \cdots \\
& \hspace{1cm} \cdots   \log| \theta( (\alpha + B(0)  \beta)^{(g-r)},B_{g-r,g-r}(0))| \, \d \, \alpha^{(g-r)} \, \d \, \beta^{(r)} \, \d \, \beta^{(g-r)} \, . 
\end{split} \] 
In order to evaluate the triple integral we fix an element $\beta^{(r)} \in \Vor(A_0)$ and perform the integration over $\alpha^{(g-r)}$, $\beta^{(g-r)}$ first. We set
\[ \begin{split} F(\beta^{(r)})  & = \left\{ \alpha^{(g-r)}  + B_{g-r,r}(0)  \beta^{(r)} + B_{g-r,g-r}(0)  \beta^{(g-r)} \, \Big| \right. \\
& \hspace{2cm} \left. \alpha^{(g-r)} \in \left[-\frac{1}{2},\frac{1}{2} \right]^{g-r} \, , \, \beta^{(g-r)} \in \Vor(Q_{g-r,g-r}(0)) \right\} \subset \cc^{g-r} \, . \end{split} \]
The set $F(\beta^{(r)})$ is a fundamental domain for the lattice $\zz^{g-r} + B_{g-r,g-r}(0) \zz^{g-r}$ in $\cc^{g-r}$. Let $\mu_{\beta^{(r)}}$ denote the Lebesgue measure on  $\cc^{g-r}$ giving $F(\beta^{(r)})$ volume one. Still keeping $\beta^{(r)} \in \Vor(A_0)$ fixed we find using Lemma~\ref{rewrite_I_inv}
\[ \begin{split}-2  \int_{ \left[-\frac{1}{2},\frac{1}{2} \right]^{g-r} } & \int_{ \Vor(Q_{g-r,g-r}(0))} \cdots \\
 & \hspace{1cm} \cdots \log| \theta( (\alpha + B(0)  \beta)^{(g-r)},B_{g-r,g-r}(0))| \, \d \, \alpha^{(g-r)} \, \d \, \beta^{(g-r)} \\ & = -2 \int_{F(\beta^{(r)})}  \log |\theta(w, B_{g-r,g-r}(0))| \, \d \, \mu_{\beta^{(r)}} (w) \\
& = 2\, I( P ) + \frac{g-r}{2} \log 2 + \frac{1}{2} \log \det \Im B_{g-r,g-r}(0) \\ 
& \hspace{1cm} -2\pi \int_{F(\beta^{(r)})} 
(\Im w)^t  (\Im B_{g-r,g-r}(0) )^{-1}  (\Im w) \, \d \, \mu_{\beta^{(r)}}(w) \, . 
\end{split} \]
Now we note that
\[ \begin{split} 2\pi \, \Im & ( \alpha^{(g-r)}  + B_{g-r,r}(0)  \beta^{(r)} + B_{g-r,g-r}(0)  \beta^{(g-r)} ) \\
 & = Q_{g-r,r}(0)  \beta^{(r)} + Q_{g-r,g-r}(0)  \beta^{(g-r)} \, . \end{split} \]
This gives
\[ \begin{split} 2\pi & \int_{F(\beta^{(r)})}  (\Im w)^t  (\Im B_{g-r,g-r}(0) )^{-1}  (\Im w) \, \d \, \mu_{\beta^{(r)}}(w) \\
& = I(Q_{g-r,g-r}(0)) + \beta^{(r),t}   Q_{r,g-r}(0)  Q_{g-r,g-r}^{-1}(0)  Q_{g-r,r}(0)  \beta^{(r)} \, . \end{split} \]
For the triple integral we therefore find 
\[ \begin{split}  -2 & \int_{\left[-\frac{1}{2},\frac{1}{2} \right]^{g-r}}  \int_{\Vor(A_0)} \int_{ \Vor(Q_{g-r,g-r}(0))} \cdots \\
 & \hspace{1cm} \cdots \log| \theta( (\alpha + B(0)  \beta)^{(g-r)},B_{g-r,g-r}(0))| \, \d \, \alpha^{(g-r)} \, \d \, \beta^{(r)} \, \d \, \beta^{(g-r)} \\
 & = 2 \, I(P ) + \frac{g-r}{2} \log 2 + \frac{1}{2} \log \det \Im B_{g-r,g-r}(0)  \\ 
 & \hspace{1cm} - I(Q_{g-r,g-r}(0)) - I_{\Vor(A_0)}(Q_{r,g-r}(0)  Q_{g-r,g-r}^{-1}(0)  Q_{g-r,r}(0)) \, . \end{split} \]
This leads to
  \begin{equation} \label{cst_term} \begin{split} \lim_{t \to 0} & \left[   2\,  I(\mathcal{A}_t,\alpha_t) +  I(\varSigma) \log|t| +  \frac{1}{2} \log \det \Im \varOmega(t)
  \right] \\ 
  & = -\frac{r}{2} \log 2 + 2 \, I(P) + \frac{1}{2} \log \det \Im B_{g-r,g-r}(0) \\
  & \hspace{1cm} + I_{\Vor(A_0)} \left( Q_{r,r}(0) - Q_{r,g-r}(0)  Q_{g-r,g-r}^{-1}(0)  Q_{g-r,r}(0) \right) \, . \end{split}
\end{equation}
We note that $Q_{r,r}(0) - Q_{r,g-r}(0)  Q_{g-r,g-r}^{-1}(0)  Q_{g-r,r}(0)$ is a \emph{Schur complement} of the matrix $Q(0)$.
It would be interesting to further interpret the term in (\ref{cst_term}) related to this Schur complement. 

\begin{example}
Assume that $r=1$ and that the family $\mathcal{A}_t$ is the family of Jacobians associated to a family of compact Riemann surfaces of genus~$g>0$, whose limit at $t=0$ is the stable curve obtained by taking a compact Riemann surface $M$ of genus $g-1$ and identifying two distinct points $a, b \in M$. Then $P=\Jac M$, and the vector $B_{1,g-1}$ represents the point in $\Jac M$ determined by the divisor $a-b$. Let $\ee(x,y)$ denote the Riemann prime form of $M$, and consider the real-valued variant
\[ F(x,y) = \exp(-2\pi \,\Im(x-y)^t  (\Im B_{g-1,g-1}(0))^{-1}  \Im(x-y) ) |\ee(x,y)|^2 \]
as in \cite[\S2]{we}. Following the proof of \cite[Lemma~7.5]{we} we have $ Q_{1,1}(0) = 2 \log |\ee(a,b)| $ and hence we find for the Schur complement
\[ \begin{split}  Q_{1,1}(0) - & Q_{1,g-1}(0)  Q_{g-1,g-1}^{-1}(0)  Q_{g-1,1}(0) \\
& = 2 \log |\ee(a,b)| - 2\pi \, \Im(a-b)^t   (\Im B_{g-1,g-1}(0))^{-1}  \Im(a-b) \\
& = \log |F(a,b)| \, . 
\end{split} \]
We have $\Vor(A_0) =  \left[-\frac{1}{2},\frac{1}{2} \right] \subset \rr$ and hence
\[ I_{\Vor(A_0)}  \left( Q_{1,1}(0) -  Q_{1,g-1}(0)  Q_{g-1,g-1}^{-1}(0)  Q_{g-1,1}(0) \right) = \frac{1}{12} \log |F(a,b)| \, . \]
\end{example}

\section{The archimedean $\lambda$-invariant and its asymptotics} \label{curves}

In this section we prove Theorem~\ref{lambda_asympt_intro}. We will be brief here, as a very similar reasoning is also applied in \cite[\S7]{wi_deg}. In fact, Theorem~\ref{phi_asympt} below coincides with \cite[Theorem~1.2]{wi_deg}. 

\subsection{The archimedean $\lambda$-invariant} \label{sec:arch_lambda}
Let $g \in \zz_{>0}$.
Let $C$ be a compact and connected Riemann surface of genus $g$.  Let $\delta_F(C)$ be the Faltings delta-invariant of $C$ as defined in \cite[p.~401]{fa} and let $\varphi(C)$ be the Zhang-Kawazumi invariant of $C$ as defined in \cite[\S1.3]{zhgs}. Up to a multiplicative constant, the invariant $\varphi(C)$ is the same as the invariant $a(C)$ introduced and studied independently by N.\ Kawazumi in \cites{kaw, kawhandbook}. 
Put $\delta(C)=\delta_F(C)-4g\log(2\pi)$.

\medskip

\noindent \textbf{Definition.}
The $\lambda$-invariant $\lambda(C)$ of $C$ is defined to be the real number 
 \begin{equation} \label{lambda_arch}  \lambda(C) = \frac{g-1}{6(2g+1)} \varphi(C) + \frac{1}{12}\delta(C) \, ,
 \end{equation}
see also \cite[\S1.4]{zhgs}.

\medskip

Let $\Jac C$ denote the Jacobian of $C$, seen as a principally polarized complex abelian variety.
From \cite[Theorem~1.1]{wi} and  (\ref{def_arch_lambda_bis}) we arrive at the identity
\begin{equation} \label{eq:wilms}
   2 \, \varphi(C) = \delta_F(C) - 24 \, I(\Jac(C)) + 8g \log(2\pi) - 6g \log 2 \, . 
\end{equation}

\subsection{Invariants of polarized metric graphs}
Let $\varGamma$ be a connected metric graph as in \S\ref{pm_graphs}. It is shown in \cite[Theorem 2.11]{cr} that there exists a unique measure $\mu_{\can}$ on $\varGamma$ having total volume one, such that $g_{\mu_{\can}}(x, x)$ is a constant. Here $g_{\mu} \colon \varGamma \times \varGamma \to \rr$ is the Green's function determined by a measure $\mu$. We define $\tau(\varGamma)$ to be this constant. We further let $\delta(\varGamma)$ denote the total length of $\varGamma$, and we write $\Jac(\varGamma)$ for the tropical Jacobian of $\varGamma$.
In \cite[Theorem~B]{djs} we have shown the equality
\begin{equation} \label{eqn:universal_linear}
 I(\Jac(\varGamma)) + \frac{1}{2}\tau(\varGamma) = \frac{1}{8}\delta(\varGamma) \, . 
\end{equation}
 Let $\overline{\varGamma}=(\varGamma,\mathbf{q})$ be a polarized metric graph. Let $\epsilon(\overline{\varGamma})$ be its $\epsilon$-invariant (\ref{eps_invariant}). By \cite[Proposition~9.2]{dj_NT_height} we have
\begin{equation} \label{eqn:epsilon_phi}
 \frac{1}{12} \left( \delta(\varGamma) + \epsilon(\overline{\varGamma}) - 2 \, \varphi(\overline{\varGamma}) \right) + \frac{1}{2} \tau(\varGamma) = \frac{1}{8} \delta(\varGamma) \, . 
\end{equation}
By combining (\ref{eqn:universal_linear}) and (\ref{eqn:epsilon_phi}) we arrive at  the equality
\begin{equation} \label{eq:djs}
 2\,\varphi(\overline{\varGamma}) = \delta(\varGamma) +\epsilon(\overline{\varGamma}) -12 \, I(\Jac(\varGamma)) \, . 
 \end{equation}
 
 \subsection{Proof of Theorem~\ref{lambda_asympt_intro}}
Let $\pi \colon \mathcal{X} \to \dd$ be a stable curve of genus $g \geq 2$ over the open unit disc $\dd$. We assume that $\pi$ is smooth over $\dd^*$.  Let $\overline{G} = (G,\mathbf{q})$ be the  polarized weighted graph associated to $\pi$ obtained by taking as underlying polarized graph the dual graph of the special fiber $\mathcal{X}_0$ and endowing the vertices with the arithmetic genera of the corresponding irreducible components, and by letting the lengths of the edges be given by the ``thicknesses'' of the corresponding singular points of $\mathcal{X}_0$ on the total space $\mathcal{X}$. Let $\overline{\varGamma}= (\varGamma,\mathbf{q})$ be the  polarized metric graph associated to $\overline{G}=(G,\mathbf{q})$.

Write $X=\pi^{-1}\dd^*$.
Let $f \colon \mathcal{J} \to \dd^*$  denote the  family of Jacobians associated to the family of curves $X \to \dd^*$. Then $\mathcal{J}$ extends to a family of semiabelian varieties over $\dd$. 
Let $\varOmega(t)$ be the family of period matrices determined by a symplectic basis of a fiber $H$ of  $R^1\pi_* \zz_X=R^1 f_*\zz_{\mathcal{J}}$. By \cite[Theorem~1.1]{dj_stable} we have the asymptotics
\begin{equation} \label{asymptdelta} \delta_F(X_t) \sim -(\delta(\varGamma)+\epsilon(\overline{\varGamma})) \log|t| - 6 \log \det \Im \varOmega(t) 
\end{equation}
for the Faltings delta-invariants of the curves $X_t$ as $t \to 0$. 

Following for example \cite[\S3.2]{abbf}, the character group $Y=\Gr_0 H$ from the principally polarized tropical abelian variety $(X,Y,\varPhi,b)$ associated to the degenerating family $\mathcal{J}$ of principally polarized abelian varieties is canonically identified with the first homology group $H_1(\varGamma,\zz)$. Moreover,  the associated inner product $[\cdot,\cdot]_Y$ on $Y_\rr$ is identified with the natural cycle pairing on $H_1(\varGamma,\rr)$, see \cite[Proposition~3.4]{abbf}.
This gives that the principally polarized real torus $\varSigma$ associated to $(X,Y,\varPhi,b)$  is canonically identified with the tropical Jacobian $\Jac(\varGamma)$ of $\varGamma$.

Using this, Theorem~\ref{I-asymptotic} specializes to saying that
\begin{equation} \label{I-Jac-asymp}
 2 \, I(\Jac X_t) \sim - I(\Jac(\varGamma)) \log|t| - \frac{1}{2} \log \det \Im \varOmega(t) 
\end{equation}
as $t \to 0$. 
Combining (\ref{eq:wilms}), (\ref{eq:djs}),  (\ref{asymptdelta}) and (\ref{I-Jac-asymp}) we obtain the following asymptotics  for the Zhang-Kawazumi invariant $\varphi$ in the family $X_t$. 
\begin{thm} \label{phi_asympt} 
Let $\varphi(\overline{\varGamma}) $ be Zhang's $\varphi$-invariant (\ref{phi_nonarch}) of the polarized metric graph $\overline{\varGamma}$. Then one has the asymptotics
\begin{equation}
\varphi(X_t) \sim - \varphi(\overline{\varGamma}) \log|t| 
\end{equation}
for the Zhang-Kawazumi invariants of the curves $X_t$ as $t \to 0$ over $\dd^*$. 
\end{thm}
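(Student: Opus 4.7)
The plan is to prove Theorem~\ref{phi_asympt} by a direct algebraic manipulation combining the four ingredients that have already been assembled in \S\ref{curves}. The key observation is that the archimedean Zhang--Kawazumi invariant $\varphi(C)$ admits an explicit expression in terms of the Faltings delta-invariant and the Autissier $I$-invariant of the Jacobian via~\eqref{eq:wilms}, while its graph-theoretic counterpart $\varphi(\overline{\varGamma})$ admits a structurally identical expression via~\eqref{eq:djs}. Comparing the two term by term, and inserting the known asymptotics, will produce the claim.

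More precisely, I would start from~\eqref{eq:wilms}, applied to $C=X_t$, which gives
\[
2\,\varphi(X_t) = \delta_F(X_t) - 24\, I(\Jac(X_t)) + 8g\log(2\pi) - 6g\log 2 \, .
\]
The last two terms are constants and hence extend continuously over $\dd$, so they are absorbed by $\sim$. Plugging in \eqref{asymptdelta} for $\delta_F(X_t)$ and \eqref{I-Jac-asymp} (multiplied by $12$) for $24\,I(\Jac X_t)$, the two occurrences of $\log \det \Im \varOmega(t)$ appear with coefficients $-6$ and $+6$ and therefore \emph{cancel}. What survives is
\[
2\,\varphi(X_t) \sim -\bigl(\delta(\varGamma)+\epsilon(\overline{\varGamma})-12\,I(\Jac(\varGamma))\bigr)\log|t| \, .
\]

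To finish, I would invoke \eqref{eq:djs}, which identifies the quantity in parentheses with $2\,\varphi(\overline{\varGamma})$. Dividing by $2$ gives the desired asymptotic $\varphi(X_t) \sim -\varphi(\overline{\varGamma}) \log|t|$. There is no genuine obstacle here: the entire argument is a bookkeeping exercise in which the ``transcendental'' correction term $\log \det \Im \varOmega(t)$ drops out because the numerical constants $-24$ in \eqref{eq:wilms} and $-12$ in \eqref{eq:djs} are precisely calibrated so that the coefficients of $\log \det \Im \varOmega(t)$ coming from \eqref{asymptdelta} and \eqref{I-Jac-asymp} match. All the substantive content -- the asymptotics of $I(\Jac X_t)$ from Theorem~\ref{I-asymptotic}, the asymptotics of $\delta_F$ from \cite{dj_stable}, and the identification of the limit polarized real torus $\varSigma$ with the tropical Jacobian $\Jac(\varGamma)$ -- has already been done upstream, so the residual work is essentially a one-line comparison.
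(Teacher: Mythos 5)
Your proposal is correct and matches the paper's proof exactly: the paper states Theorem~\ref{phi_asympt} as following from ``combining (\ref{eq:wilms}), (\ref{eq:djs}), (\ref{asymptdelta}) and (\ref{I-Jac-asymp})'' without spelling out the arithmetic, and your write-up simply carries out that combination, correctly tracking the cancellation of the $\log\det\Im\varOmega(t)$ terms.
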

The asymptotics for the archimedean $\lambda$-invariant  as displayed in Theorem~\ref{lambda_asympt_intro} follows upon combining Theorem~\ref{phi_asympt} with (\ref{lambda_nonarch}), (\ref{lambda_arch})  and  (\ref{asymptdelta}).

\section{Hodge metric and its asymptotics} \label{sec:hodge}

In our proofs of Theorem~\ref{main:intro} and Theorem~\ref{height_Ceresa_intro} we need a couple of well-known facts about the Hodge metric on the determinant of the Hodge bundle. We collect them here. 

\subsection{Determinant of the Hodge bundle on $\aa_g$} \label{sec:det_Hdg_Ag}
Let $\aa_g = \Sp(2g,\zz) \setminus \mathbb{H}_g$ be the moduli space of principally polarized complex abelian varieties, viewed as an orbifold, with projection map $q \colon \mathbb{H}_g \to \aa_g$. 
Let $f \colon \mathcal{U}_g \to \aa_g$ denote the universal abelian variety, and let $\ll_f =  \det f_* \varOmega^1_{\mathcal{U}_g/\aa_g}$ be the determinant of the Hodge bundle on $\aa_g$. 
 The line bundle $q^*\ll_f$ is trivialized by the frame $\omega =  (2\pi i)^g  (\d z_1 \wedge \cdots \wedge \d z_g)$, where $(z_1,\ldots,z_g)$ are the standard Euclidean coordinates on $\cc^g$. The Hodge metric on $q^*\ll_f$  is given explicitly by the formula
\begin{equation} \label{norm_frame}  \|   \d z_1 \wedge \cdots \wedge \d z_g \|_{\Hdg}(\varOmega) = ( \det \Im \varOmega)^{1/2} \, , \quad \varOmega \in \mathbb{H}_g \, . 
\end{equation}
The metric $\|\cdot\|_{\Hdg}$  descends along $q$ to give a smooth hermitian metric  on $\ll_f$ that we also denote by $\|\cdot\|_{\Hdg}$.  When $h \in \zz $ and $\xi$ is a local meromorphic section of the line bundle $\ll_f^{\otimes h}$ over $\aa_g$ we set
\begin{equation} \label{def:tilde}
  \widetilde{\xi} = q^*(\xi) \, \omega^{\otimes -h} = (2\pi i)^{-gh} \, q^*(\xi) \, (\d z_1 \wedge \cdots \wedge \d z_g)^{\otimes - h} \, . 
\end{equation}
We can view $\widetilde{\xi}$ canonically as a local meromorphic function on $\mathbb{H}_g$. 

It follows from (\ref{norm_frame}) that when $(A,a)$ is a principally polarized complex abelian variety, and $\xi$ is a local meromorphic section of the line bundle $\ll_f^{\otimes h}$ near the moduli point of $(A,a)$ in $\aa_g$, and $\varOmega$ is a period matrix of $(A,a)$, we have the equality
\begin{equation} \label{def_norm}   \|\xi\|_{\Hdg}(A,a) = (2\pi)^{gh} \, |\widetilde{\xi}| (\varOmega) \, (\det \Im \varOmega)^{h/2}   
\end{equation}
in $\rr$.

\subsection{Determinant of the Hodge bundle on $\mm_g$} \label{sec:det_Hdg_Mg}
Let $p \colon \mathcal{C}_g \to \mm_g$ denote the universal Riemann surface over $\mm_g$. We denote by $\ll_p = \det p_* \omega_{\mathcal{C}_g / \mm_g}$ the determinant of the Hodge bundle on $\mm_g$. As was discussed in the introduction the line bundle $\ll_p$ comes equipped with a natural smooth hermitian metric  derived from the inner product given by $(\alpha, \beta) \mapsto \frac{i}{2} \int_C \alpha \wedge \overline{\beta}$ on the space of holomorphic $1$-forms on a compact connected Riemann surface $C$. 

Letting $\mathrm{t} \colon \mm_g \to \aa_g$ denote the Torelli map, we have a canonical isomorphism of holomorphic line bundles $\ll_p \isom \mathrm{t}^* \ll_f$ on $\mm_g$, where $\ll_f$ is the determinant of the Hodge bundle on $\mathcal{A}_g$ as in \S\ref{sec:det_Hdg_Ag}. This isomorphism is an isometry when $\mathrm{t}^*\ll_f$ is equipped with the pullback of the Hodge metric 
$\| \cdot \|_{\mathrm{Hdg}}$.

\subsection{Extension over the boundary} \label{sec:extension}
Let $\overline{S}$ be a smooth complex algebraic variety,  let $D$ be a normal crossings divisor on $\overline{S}$ and write $S = \overline{S} \setminus D$. Let $\pi \colon \mathcal{X} \to \overline{S}$ be a stable curve of genus $g \geq 2$, and assume that the map $\pi$ is smooth over $S$. Let $\ll_S$ be the pullback of the  line bundle $\ll_p$ from \S\ref{sec:det_Hdg_Mg}  along the moduli map $S \to \mm_g$, equipped with the pullback of the metric $\| \cdot \|_{\mathrm{Hdg}}$.
Write $\ll_{\overline{S}} = \det \pi_* \omega_{\mathcal{X}/\overline{S}}$ where $\omega_{\mathcal{X}/\overline{S}}$ is the relative dualizing sheaf of~$\pi$, and let $X = \pi^{-1}S$.
\begin{prop} \label{mumford} (i) The first Chern form $c_1(\ll_S)$ determines a current $\left[ c_1(\ll_S) \right]$ over $\overline{S}$. (ii) In the case that $\overline{S}$ is projective, the current $\left[ c_1(\ll_S) \right]$ represents the cohomology class $c_1(\ll_{\overline{S}} )$. (iii) Assume that $\overline{S}=\dd$, and $D=\{0\}$. Let $\xi$ be a local generating section of $\ll_{\dd}$ near~$0$. Let $\varOmega(t)$ be the family of period matrices on $\dd^*$ determined by a symplectic framing of $R^1\pi_*\zz_X$. The asymptotic
\begin{equation} \label{hodge_asymp}
\log \| \xi \|_{\mathrm{Hdg}} \sim  \ord_0(\xi,\ll_{\dd}) \log|t| + \frac{1}{2} \log \det \Im \varOmega(t) 
\end{equation}
holds. 
\end{prop}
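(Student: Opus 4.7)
The plan is to establish (iii) first, since the asymptotic~(\ref{hodge_asymp}) is the key input that unlocks both (i) and (ii). For (iii), I would combine the explicit formula~(\ref{def_norm}) for the Hodge norm on $\aa_g$ with Mumford's theorem identifying $\ll_{\overline{S}} = \det \pi_* \omega_{\mathcal{X}/\overline{S}}$ with Deligne's canonical extension of the Hodge bundle over the boundary. Concretely, the monodromy matrix $T = \mat{\id & A \\ 0 & \id} \in \Sp(2g,\zz)$ arising in~\S\ref{sec:NilpOrb} satisfies $\det(0 \cdot \varOmega + \id) = 1$, so the frame $\omega$ on $\mathbb{H}_g$ is invariant under $T$ and descends via the period map $\varOmega \colon \mathbb{H} \to \mathbb{H}_g$ to a nowhere-vanishing holomorphic section $\omega^{(t)}$ of $\ll_{\dd^*}$. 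Mumford's theorem then implies that $\omega^{(t)}$ extends across $t = 0$ to a generating section of $\ll_{\dd}$. Any local section $\xi$ of $\ll_{\dd}$ can therefore be written as $\xi = t^n u(t)\, \omega^{(t)}$ with $n = \ord_0(\xi,\ll_{\dd})$ and $u$ holomorphic and non-vanishing on $\dd$; substituting into~(\ref{def_norm}) yields
\[
 \log \|\xi\|_{\Hdg}(t) = n \log|t| + \frac{1}{2} \log \det \Im \varOmega(t) + g\log(2\pi) + \log|u(t)|,
\]
and since the last two summands extend continuously to $t = 0$ this gives exactly~(\ref{hodge_asymp}).

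For (i), part (iii) combined with the Nilpotent Orbit Theorem shows that $\det \Im \varOmega(t)$ grows at most like $(-\log|t|)^r$ near each boundary point (cf.~Remark~\ref{main_term}), so $\log \|\xi\|_{\Hdg}$ is bounded above by a constant multiple of $\log(-\log|t|)$ near $D$ and hence is locally integrable on $\overline{S}$. The same estimate holds at any point of $D$ in the normal crossings setting, by applying the Nilpotent Orbit Theorem in each coordinate direction. Therefore $c_1(\ll_S)$, which is a constant multiple of $\partial\overline{\partial} \log \|\xi\|_{\Hdg}$ on $S$, extends as a distributional $(1,1)$-current on $\overline{S}$.

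For (ii), assume $\overline{S}$ is projective and choose a global meromorphic section $\sigma$ of $\ll_{\overline{S}}$. The Poincar\'e--Lelong formula, applied with the $L^1_{\mathrm{loc}}$ potential $\log \|\sigma\|_{\Hdg}$ justified by part~(iii), gives
\[
 c_1(\ll_{\overline{S}}) = [\divisor(\sigma)] + \left[ -\frac{i}{\pi}\, \partial \overline{\partial} \log \|\sigma\|_{\Hdg} \right]
\]
as cohomology classes of currents on $\overline{S}$. The right-hand side restricts on $S$ to $[c_1(\ll_S)]$, yielding the desired identification. The main technical obstacle is the appeal to Mumford's identification of $\pi_* \omega_{\mathcal{X}/\overline{S}}$ with Deligne's canonical extension of the Hodge bundle (classical for stable degenerations, due to Mumford, Faltings--Chai and Steenbrink); this is what makes $\omega^{(t)}$ extend to a generating section of $\ll_{\dd}$ at $t = 0$, and every other step of the argument relies on it.
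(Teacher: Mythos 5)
Your proof of part (iii) matches the paper's in essence: both pass through the formula~\eqref{def_norm}, the observation that the frame $\omega$ is monodromy-invariant and hence descends, and the identification of $\ll_{\overline{S}}$ with Deligne's canonical extension of the Hodge bundle (Mumford, Faltings--Chai). The paper phrases the last step via the characterization of the canonical extension by logarithmic growth of norms, combined with the nilpotent-orbit-theorem estimate $\det \Im \varOmega(t) \sim c\,(-\log|t|)^r$, but this is the same argument.

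For parts (i) and (ii), however, you have departed from the paper and in doing so opened a genuine gap. The paper cites Mumford's theory of \emph{good metrics} (\cite[Theorem~3.1, Proposition~1.1, Theorem~1.4]{mu}), which asserts that the Hodge metric on a frame of the canonical extension, together with its first two derivatives, satisfies precise $\log$/$\log\log$ bounds near $D$; this gives that the Chern form $c_1(\ll_S)$ is itself locally $L^1$ on $\overline{S}$ (so it genuinely ``determines a current'' by integration against test forms), and moreover that this trivial-extension current is closed and represents $c_1(\ll_{\overline{S}})$. Your argument only shows that the \emph{potential} $\log\|\xi\|_{\Hdg}$ is $L^1_{\mathrm{loc}}$, and then takes $\partial\overline{\partial}$ of that distribution. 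This does not by itself give integrability of $c_1(\ll_S)$, and more importantly $\partial\overline{\partial}[\log\|\xi\|_{\Hdg}]$ could \emph{a priori} differ from the trivial extension of $c_1(\ll_S)$ by a current supported on $D$ (the Poincar\'e--Lelong phenomenon: $\partial\overline{\partial}\log|t|$ has a delta mass at $0$ even though the smooth form on $\dd^*$ vanishes). The content of Mumford's theorem is precisely that for the Hodge metric there is no such extra mass; you invoke the conclusion without establishing it, and the same issue undercuts the application of Poincar\'e--Lelong in your part (ii). If you want to avoid Mumford's machinery you would need to prove, not just cite, the $L^1$-integrability of the Chern form and the absence of boundary mass, which requires the full strength of the curvature bounds coming from the nilpotent and $\mathrm{SL}_2$ orbit theorems, not merely the growth bound on the potential.
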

\begin{proof} By \cite[Theorem~3.1]{mu} the Hodge metric on $\ll_S$ is a good metric in the sense of Mumford. This implies item (i) by \cite[Proposition~1.1]{mu}. Also, by \cite[Proposition~1.3]{mu} the line bundle $\ll_S$ has a canonical extension $\overline{\ll_S}$ uniquely determined by the property that the norms of local generating sections have at most logarithmic growth in any local coordinate system. By \cite[p.~225]{fc} the canonical extension $\overline{\ll_S}$ is equal to $\ll_{\overline{S}}$. Item (ii) then follows upon applying \cite[Theorem~1.4]{mu}. 

As to item (iii),  by the results in \S\ref{sec:NilpOrb} based on the Nilpotent Orbit Theorem there exist $c, r \in \zz_{\geq 0}$ such that $ \det \Im \varOmega(t) \sim c \left(- \log |t| \right)^r $
as $t \to 0$. Combining with (\ref{norm_frame}) we conclude that $\varOmega^*( \d z_1 \wedge \ldots \wedge \d z_g) $ extends as a frame of the canonical extension $\overline{\ll_{\dd^*}}$ over $\dd$. As we have just seen, this canonical extension is equal to $\ll_{\dd}$. By (\ref{def:tilde}) we therefore obtain the equality $\ord_0(\xi,\ll_{\dd})=\ord_0(\tilde{\xi})$. This  leads to the asymptotic $\log |\tilde{\xi}| \sim \ord_0(\xi,\ll_{\dd}) \log|t|$ as $t \to 0$. The asymptotic (\ref{hodge_asymp}) follows from combining this with (\ref{def_norm}).
\end{proof}

\subsection{Connection with the $\lambda$-invariant} \label{sec:connection_lambda}
As was discussed in the introduction, the Hain-Reed line bundle $\bb$ and the line bundle $\ll_p^{\otimes 8g+4}$ on $\mm_g$ are isomorphic as holomorphic line bundles, and the set of isomorphisms $\phi \colon \bb \isom \ll_p^{\otimes 8g+4}$ is a $\cc^\times$-torsor. For each such isomorphism $\phi$ we have an $\rr$-valued function $\beta_\phi$ as in (\ref{def:beta})
on $\mm_g$. 

We recall that by \cite[Theorem~1.4]{dj_second}, for a suitable choice of the isomorphism $\phi$, one has the equality of functions $\beta_\phi = (8g+4) \lambda$ on $\mm_g$, where $\lambda \colon \mm_g \to \rr$ is the archimedean $\lambda$-invariant given in (\ref{lambda_arch}). In the following we will fix this isomorphism $\phi$. We thus obtain from (\ref{def:beta}) the useful identity
\begin{equation} \label{lambda_new_def}
 (8g+4) \lambda = \log \left( \frac{ \| \cdot \|_\bb }{\phi^* \| \cdot \|_{\Hdg}} \right)
\end{equation}
of functions on $\mm_g$. 

\section{Height of the Ceresa cycle in the function field case} \label{sec:height_Ceresa}

In this section we derive Theorem~\ref{height_Ceresa_intro} from Theorem~\ref{lambda_asympt_intro}. We repeat the setting: let $\overline{S}$ be a smooth projective connected complex curve, let $D$ be an effective reduced divisor on $\overline{S}$, and let $S = \overline{S} \setminus D$. We consider a  stable curve $\pi \colon \mathcal{X} \to \overline{S}$ of genus $g \geq 2$, smooth over $S$. Let $\bb_S$ denote the pullback of the Hain-Reed line bundle $\bb$ along the moduli map $S \to \mm_g$. 

As in \S\ref{sec:extension} let $\ll_S$ be the pullback of the determinant of the Hodge bundle along the moduli map $S \to \mm_g$, equipped with the pullback of the metric $\| \cdot \|_{\mathrm{Hdg}}$.
We write $\ll_{\overline{S}} = \det f_* \omega_{\mathcal{X}/\overline{S}}$ and let $h(\mathcal{X}/\overline{S})= \deg \ll_{\overline{S}}$ be the modular height of $f$.
\begin{proof}[Proof of Theorem~\ref{height_Ceresa_intro}] 
From (\ref{lambda_new_def}) we obtain an equality of $(1,1)$-forms
\begin{equation} \label{c1_overS}
 (8g+4) c_1(\ll_S) = c_1(\bb_S) - (8g+4) \frac{\partial \overline{\partial}}{\pi i} \lambda 
\end{equation}
over $S$.
By Proposition~\ref{mumford} the first Chern form $c_1(\ll_S)$ determines a current $\left[c_1(\ll_S)\right]$ over $\overline{S}$, and we have
\begin{equation} \label{mod_heightI}
h(\mathcal{X}/\overline{S}) = \int_{\overline{S}} \,\, \left[c_1(\ll_S)\right] \, . 
\end{equation}
For each $s \in \overline{S}$ we denote by $\overline{G}_s$ the polarized weighted graph associated to the fiber of~$\pi$ above~$s$.
Following the proof of \cite[Lemma~XI.9.17]{acg} or \cite[Lemma~2.11]{bghdj} we may deduce from the estimate (\ref{lambda_estimate}) in Theorem~\ref{lambda_asympt_intro} that $ \frac{\partial \overline{\partial}}{\pi i} \lambda$ determines a current over $\overline{S}$, and moreover for $p \in \overline{S}$ a point and for $\dd \isom U$ with $0 \mapsto p$  a small coordinate neighborhood of $p$  we have that the residue
\begin{equation}
 \lim_{\epsilon \to 0} \int_{\partial \dd_\epsilon} \frac{1}{\pi i} \overline{\partial} \lambda = \lambda(\overline{G}_s) \, . 
 \end{equation}
 Write
\begin{equation}
 \lambda(\mathcal{X}/\overline{S}) =  \sum_{s \in |\overline{S}|} \lambda(\overline{G}_s) \, . 
\end{equation}
An application of Stokes' theorem then yields
\begin{equation} \label{stokes}
-\int_{\overline{S}} \, \, \left[ \frac{\partial \overline{\partial}}{\pi i} \lambda \right] =  \lambda(\mathcal{X}/\overline{S}) \, .
\end{equation}
We recall from (\ref{mod_height:intro}) that
\begin{equation} \label{mod_height}
h(\mathcal{X}/\overline{S})  = \frac{3g-3}{2g+1} c(\mathcal{X}/\overline{S}) +   \lambda(\mathcal{X}/\overline{S}) \, . 
\end{equation}
From (\ref{c1_overS}) we find
\begin{equation} \label{c1_overSbar}
 (8g+4) \left[c_1(\ll_S) \right] = \left[ c_1(\bb_S) \right] - (8g+4) \left[ \frac{\partial \overline{\partial}}{\pi i} \lambda  \right] \, . 
\end{equation}
We find the required equality
\begin{equation}
12(g-1) \, c(\mathcal{X}/\overline{S}) = \int_{\overline{S}} \,\, [c_1(\bb_S)]
\end{equation}
upon combining (\ref{mod_heightI}), (\ref{stokes}), (\ref{mod_height})  and (\ref{c1_overSbar}).
\end{proof}

\section{Jumps in the archimedean height} \label{ht_jump}

Before entering into the proof of Theorem~\ref{main:intro} we review here briefly the notion of height jumps as introduced by Hain in \cite{hain_normal} in the setting of the Ceresa cycle on the moduli space of curves, and as further analyzed by Brosnan and Pearlstein in a general context in \cite{bp}. 

\subsection{Biextension metric and archimedean height}
Let $\overline{S}$ be a smooth complex algebraic variety. Let $D$ be a normal crossings divisor on $\overline{S}$, and write $S = \overline{S} \setminus D$. Let $\UU$ be a weight~$-1$ polarized variation of Hodge structure over $S$, and let $\nu$ be a normal function section of the Griffiths intermediate Jacobian fibration $\jj(\UU)$ of $\UU$ over $S$. We recall that to give such a normal function is to give an element of the group of Yoneda extensions $\Ext^1(\zz,\UU)$ in the category of variations of mixed Hodge structures over $S$. 

As is explained in \cite[\S\S 6--7]{hain_biext}, there is a holomorphic line bundle $\widehat{\bb} \to \jj(\UU)$ whose underlying $\gg_\mathrm{m}$-torsor classifies symmetric biextensions over $\UU$. The line bundle $\widehat{\bb}$ is equipped with a canonical smooth hermitian metric, called the \emph{biextension metric}. By pulling back along the normal function $\nu \colon S \to \mathcal{J}(\UU)$ we obtain a smooth hermitian line bundle $\bb = \nu^*\widehat{\bb}$ on $S$. We denote by $\|\cdot\|_\bb$ the induced metric on~$\bb$. By \cite[Theorem~13.1]{hain_normal} or \cite[Theorem~8.2]{pp}, the $(1,1)$-form $c_1(\bb,\|\cdot\|_\bb)$ is semi-positive. For $\mathcal{V}$ a nowhere vanishing holomorphic local  section of $\bb$, the smooth plurisubharmonic (psh) function $h(\mathcal{V}) = -\log \|\mathcal{V}\|_\bb$ is called the \emph{archimedean height} of $\mathcal{V}$.

The height jump is a device to quantify the singularities of the metric $\|\cdot\|_\bb$ near the points of the boundary divisor $D$ of $S$ in $\overline{S}$. We suppose throughout that the monodromy operators of $\UU$ around the branches of $D$ are unipotent, and that the normal function $\nu$ is \emph{admissible} in the sense of M.\ Saito. Normal functions that arise from families of algebraic cycles over $S$ by the Griffiths Abel-Jacobi construction -- e.g., the normal function on $\mm_g$ associated to the Ceresa cycle -- are admissible. 

\subsection{General definition of the height jump}
Let $p \in \overline{S}$ be a point and let $n = \dim S$. Let  $U \isom \dd^n$ with $p \mapsto 0$ be a small coordinate neighborhood of $p$ in $\overline{S}$. Suppose that the normal crossings divisor $D$ is given by the equation $t_1 \cdots t_r = 0$ in $U$. As is shown in \cite[Theorem~81]{bp}, we may assume that the set $\bb^\times(U \setminus D)$ of nowhere vanishing holomorphic sections of $\bb$ over $U \setminus D$ is non-empty. We choose an element $\mathcal{V} \in \bb^\times(U \setminus D)$. 

As is shown in \cite[Theorem~5.37]{pe}, there exists a unique homogeneous weight one element $\mu \in \qq(x_1,\ldots,x_r)$ such that for all $m = (m_1,\ldots,m_r) \in \zz_{>0}^r$ and for all holomorphic arcs $f \colon \dd \to U$ with $f(0)=p$ and $f(\dd^*) \subset U \cap S$ and $\ord_0 f^* t_i = m_i$ for $i=1,\ldots,r$, the asymptotics
\[ f^* h(\mathcal{V}) \sim -\mu(m_1,\ldots,m_r) \log |t| \]
holds as $t \to 0$. 
For $i=1,\ldots,r$ we let $D_i$ denote the local branch of $D$ at $p$ given by the equation $t_i=0$, and let $f_i \colon \dd \to U$ be a holomorphic arc that intersects the branch $D_i$ transversally and has empty intersection with the other branches. Then similarly for all $i=1,\ldots,r$ there exists $\mu_i \in \qq$ such that the asymptotics 
\[ f_i^* h(\mathcal{V}) \sim - \mu_i \log |t| \]
holds as $t \to 0$. 

\begin{definition} \label{def:ht_jump} 
The \emph{height jump} at $p$ is the element $j(p) \in \qq(x_1,\ldots,x_r)$ uniquely determined by the equality
\begin{equation} \label{def:ht_jump_eqn}
 j(p;m_1,\ldots,m_r) = - \mu(m_1,\ldots,m_r) + \sum_{i=1}^r m_i \mu_i  \, , \quad (m_1,\ldots,m_r) \in \zz_{>0}^r \, . 
\end{equation}
As is straightforward to check, the element $j(p) \in \qq(x_1,\ldots,x_r)$ is independent of the choice of $\mathcal{V} \in \bb^\times(U \setminus D)$. Moreover when viewing $j(p)$ as an element of $\qq(x_i \, | \, i \in \mathcal{I} )$ with $\mathcal{I}$ the set of local branches of $D$ at $p$, the height jump is independent of the choice of coordinate neighborhood $U$ of $p$ as well. When viewed as an invariant of the points of $\overline{S}$, the height jump is locally constant on the natural combinatorial strata of the normal crossings divisor~$D$. Let $D^\sing$ denote the singular locus of $D$. By construction, the height jump vanishes on $\overline{S} \setminus D^\sing$.
\end{definition}

Following \cite[equation (23)]{bp} we normalize the function $h(\mathcal{V}) \colon U \setminus D \to \rr$ by putting
\[ \bar{h}(\mathcal{V}) = h(\mathcal{V}) + \sum_{i=1}^r \mu_i \log |t_i|  \, . \]
We have the following properties of the function $ \bar{h}(\mathcal{V}) $.
\begin{itemize}
\item[(i)] The function $\bar{h}(\mathcal{V}) $ extends to a continuous function $h^*(\mathcal{V})$ over $U \setminus D^\sing$ (cf.\ \cite[Theorem~24]{bp});
\item[(ii)] The function $h^*(\mathcal{V})$ extends to a psh function over $U$ (cf.\ \cite[Theorem~27]{bp}).
\end{itemize}

\subsection{The asymptotic height pairing}
In the following we keep the point $p \in \overline{S}$ fixed.
Let $I\!H^1(\UU) \subseteq H^1(U\setminus D,\UU)$ be the local intersection cohomology group of the variation $\UU$ at $p$. One of the main purposes of \cite{bp}  is to show that the height jump at $p$ can be explained in terms of a natural pairing, called the \emph{asymptotic height pairing}, on $I\!H^1(\UU)$. More precisely, for $m=(m_1,\ldots,m_r) \in \qq_{\geq 0}^r$ let 
\[ h(m_1,\ldots,m_r) \colon I\!H^1(\UU) \times I\!H^1(\UU) \longrightarrow \qq \]
be the asymptotic height pairing associated to $m$ as defined in \cite[\S6]{bp}. Let $\sing(\nu) \in H^1(U\setminus D,\UU)$ be the \emph{singularity} of the normal function $\nu$, i.e.\ the image of the normal function section $\nu \in H^0(U\setminus D,\jj(\UU))$ under the homomorphism  $H^0(U \setminus D,\jj(\UU)) \to H^1(U \setminus D,\UU)$ coming from the natural short exact sequence
\[ 0 \to \UU \to \UU \otimes_\zz \oo_S \to \mathcal{J}(\UU) \to 0   \]
of sheaves on $S$.
 Then $\sing(\nu) \in I\!H^1(\UU)$ and, for all $m=(m_1,\ldots,m_r) \in \zz_{> 0}^r$, the equality
\[ j(p;m_1,\ldots,m_r) = h(m_1,\ldots,m_r)(\sing(\nu),\sing(\nu)) \]
holds. See \cite[Theorem~22]{bp}. 

The connection with the asymptotic height pairing  leads to the following properties of the height jump (cf.\ \cite[Proposition~140, Corollary~13 and the remarks at the end of \S1.2]{bp}):
\begin{itemize}
\item[(iii)] The function $j(p)$ extends to a continuous function over $\rr_{\geq 0}^r$;
\item[(iv)] For all $m=(m_1,\ldots,m_r) \in \rr_{\geq 0}^r$ one has $j(p;m_1,\ldots,m_r) \geq 0$;
\item[(v)] The following assertions are equivalent:
\begin{itemize}
\item the height jump $j(p)$ vanishes identically;
\item the singularity $\sing(\nu)$ of the normal function $\nu$ vanishes in $I\!H^1(\UU)$;
\item the plurisubharmonic function from (ii) is locally bounded at $p$.
\end{itemize}
\end{itemize}
Item (iv) is proved independently in \cite[Theorem~1.4]{bghdj} using different techniques. 

\subsection{The Hain-Reed line bundle} The set-up as discussed above generalizes in a straightforward manner to the setting of orbifolds. The case that we are interested in in this paper -- and which is discussed at length in \cite{hain_normal}  -- arises as follows. The starting point is the tautological variation of Hodge structures $\mathcal{H}$ of weight $-1$ on the moduli orbifold $\mm_g$ of curves of genus~$g$, where $g \geq 2$. The fiber of $\mathcal{H}$ at the moduli point given by a smooth projective connected curve $C$ of genus~$g$ is given by the first homology group $H_1(C,\zz)$, and the polarization is determined by the intersection pairing. 

Starting from $\mathcal{H}$ we form the weight $-1$ variation of Hodge structure $\bigwedge^3 \mathcal{H}(-1)$. The intersection pairing gives rise to a morphism of variations $\mathcal{H} \to  \bigwedge^3 \mathcal{H}(-1)$, and we let $\UU$ denote the cokernel of this morphism. When applied to the Ceresa cycle $C-C^-$ in the Jacobian of $C$, Griffiths' generalization of the Abel-Jacobi map gives rise to a normal function section $\nu \colon \mm_g \to \mathcal{J}(\UU)$ of the Griffiths intermediate Jacobian fibration $\mathcal{J}(\UU)$ of $\UU$ over $\mm_g$. 

The Hain-Reed line bundle $\bb$ is the smooth hermitian line bundle $\nu^*\widehat{\bb}$ on $\mm_g$ that arises from pulling back the symmetric biextension line bundle $\widehat{\bb}$ over $\mathcal{J}(\UU)$ along the normal function $\nu$. The compactification with normal crossings boundary divisor that we consider is the Deligne-Mumford compactification $\overline{\mm}_g$ that arises by adding in the stable curves of genus $g$.

 Items (i), (ii) and (v) above, when specialized to the setting of the normal function determined by the Ceresa cycle, give rise to items (i)--(iii) mentioned in \S\ref{sec:BP_work}. 
 
 When $p$ is a point of $\overline{\mm}_g$, represented by a stable curve $C$, in order to study the height jump at $p$ we will work with a versal analytic deformation space of $C$. We think of such a versal analytic deformation space as a coordinate neighborhood of $p$ on the orbifold~$\overline{\mm}_g$.

\section{Equality of  height jump and  slope}  \label{lambda_asympt_gives_jump}

In this final section we derive Theorem~\ref{main:intro} from Theorem~\ref{lambda_asympt_intro}. 
\begin{proof}[Proof of Theorem~\ref{main:intro}] 
Let $p \in \overline{\mm}_g$ be a point. Let $C$ be the stable curve corresponding to $p$, and let $(G, \mathbf{q})$ be the polarized (unweighted) dual graph of $C$. We repeat that in reality, instead of working on the orbifold $\overline{\mm}_g$ we are working on a versal analytic deformation space $U$ of the stable curve $C$, which we view as an open coordinate neighborhood of $p$ on $\overline{\mm}_g$. 

Let $\mathcal{I}$ denote the set of local branches of $\varDelta$ at $p$ and let $E$ be the set of edges of $G$. We fix an isomorphism $U \isom \dd^{3g-3}$ with $p \mapsto 0$, where we suppose that $\varDelta$ is given by the equation $t_1 \cdots t_r = 0$. The isomorphism $U \isom \dd^{3g-3}$ determines a bijection $\{1,\ldots,r \} \isom \mathcal{I}$ and hence a bijection $\{1,\ldots,r\} \isom E$ by composing with the canonical bijection $\mathcal{I} \isom E$. We write $D_i$ for the $i$-th local branch in $\mathcal{I}$, and $e_i$ for the $i$-th edge in $E$. 

For $i=1,\ldots,r$ we  let $f_i \colon \dd \to U$ be a holomorphic arc that intersects the branch $D_i$ transversally and has empty intersection with the other branches. We let $(G_i, \mathbf{q}_i)$ denote the polarized dual graph associated to the generic point of $D_i$. For each $i=1,\ldots, r$ we have that $(G_i, \mathbf{q}_i)$ is the polarized graph obtained by contracting all edges in $G$ except $e_i$, and endowing the resulting graph with the pushforward polarization.

Next we fix a tuple  $m=(m_1,\ldots,m_r) \in \zz_{>0}^r$. We let $f \colon \dd \to \overline{\mm}_g$ be a holomorphic arc with $f(0) = p$ and $f(\dd^*) \subset \mm_g$ such that for $i=1,\ldots,r$ we have $\ord_0 f^*(D_i) = m_i$.  Then endowing $(G, \mathbf{q})$ with edge lengths determined by $m$ gives the polarized weighted dual graph naturally associated to the stable curve over $\dd$ obtained by pulling back the universal family over $U$ along the map $f$. 

As was noted in \S\ref{ht_jump}, by \cite[Theorem~81]{bp} the set $\bb^\times(U \setminus \varDelta)$ of nowhere vanishing holomorphic sections of $\bb$ over $U \setminus \varDelta$ is non-empty. We choose an element $\mathcal{V} \in \bb^\times(U \setminus \varDelta)$. 
Unwinding Definition~\ref{def:ht_jump} of the height jump $j(p)$ at $p$ we obtain the asymptotic
\begin{equation} \label{jp_asympt} j(p)(m_1,\ldots,m_r)  \log|t| \sim - f^*  \left( \log \| \mathcal{V} \|_\bb \right) + \sum_{i=1}^r m_i f_i^* \left( \log \| \mathcal{V} \|_\bb \right) 
\end{equation}
as $t \to 0$ over $\dd$.

Let $\ll$ be the determinant of the Hodge bundle on $\mm_g$ equipped with its Hodge metric $\|\cdot\|_{\mathrm{Hdg}}$. 
Let $\pi \colon \overline{\mathcal{C}}_g \to  \overline{\mm}_g$ be the universal stable curve and let $\overline{\ll} = \det \pi_* \omega_{\overline{\mathcal{C}}_g /  \overline{\mm}_g}$.
Let $\phi \colon \bb \isom \ll^{\otimes 8g+4}$ be the isomorphism of holomorphic line bundles fixed in \S\ref{sec:connection_lambda}.

We consider the nowhere vanishing holomorphic section  $\phi_*(\mathcal{V})$ of $\ll^{\otimes 8g+4}$ over $U \setminus \varDelta$. For each $i=1,\ldots,r$ we let $a_i \in \zz$  denote the vanishing multiplicity of $ \phi_*(\mathcal{V})$   along the branch $D_i$ when viewed as a meromorphic section of $\overline{\ll}^{\otimes 8g+4}$ over $U$.  By Proposition~\ref{mumford}(iii) we have the asymptotics
\begin{equation} \label{pull_Hodge_i}
 f_i^* \left( \log \|\phi_*(\mathcal{V})\|_{\Hdg} \right) \sim a_i \log |t| + (4g+2) \log \det \Im \varOmega(t) 
\end{equation}
for $i=1,\ldots, r$, and  
\begin{equation} \label{pull_Hodge}
 f^* \left( \log \|\phi_*(\mathcal{V})\|_{\Hdg} \right) \sim a \log|t| + (4g+2) \log \det \Im \varOmega(t) \, ,
\end{equation}
where $a = \sum_{i=1}^r a_i m_i$.
Combining (\ref{pull_Hodge_i}) and (\ref{pull_Hodge}) leads to the asymptotic
\begin{equation} \begin{split} \label{good_metric}
 -f^* \left( \log \|\phi_*(\mathcal{V})\|_{\Hdg} \right) & + \sum_{i=1}^r m_i f_i^* \left( \log \|\phi_*(\mathcal{V})\|_{\Hdg} \right)\\ 
&  \sim  (4g+2) (-1 + \sum_{i=1}^r m_i) \log \det \Im \varOmega(t) \, . \end{split} 
\end{equation}
For $u \in U \setminus \varDelta$ we denote by $X_u$ the fiber above $u$ in the  universal family over $U$. Combining (\ref{lambda_new_def}), (\ref{jp_asympt})  and (\ref{good_metric})  we find 
\begin{equation}
 \begin{split} j(p) & (m_1, \ldots,m_r)  \log|t| \sim  \\
 & \sim (8g+4) \left(-  \lambda(X_{f(t)}) + \sum_{i=1}^r m_i \lambda(X_{f_i(t)}) \right) \\
 & \hspace{2cm}  + (4g+2) (-1 + \sum_{i=1}^r m_i) \log \det \Im \varOmega(t) \, . \end{split}
\end{equation}
Invoking Theorem \ref{lambda_asympt_intro} this leads to
\begin{equation} \begin{split} j(p) & (m_1, \ldots,m_r)  \log|t| \sim  \\
 & \sim (8g+4) \left( \lambda(G, \mathbf{q};m_1,\ldots,m_r) - \sum_{i=1}^r  \lambda(G_i, \mathbf{q}_i;m_i) \right) \log|t| 
\, . \end{split}
\end{equation}
We conclude that 
\begin{equation} \label{almost_slope}
 j(p)(m_1,\ldots,m_r) = (8g+4) \left( \lambda(G, \mathbf{q};m_1,\ldots,m_r) - \sum_{i=1}^r \lambda(G_i, \mathbf{q}_i;m_i) \right) \, . 
\end{equation}
Let $i \in \{1,\ldots, r\}$. The polarized graph $(G_i, \mathbf{q}_i)$ is a loop graph of genus~$g$ based on a single vertex if $f_i(0) \in \varDelta_0$, and an edge segment with vertices of genera $h$ and $g-h$ if $f_i(0) \in \varDelta_h$ for $h \in \{1,\ldots,[g/2] \}$. 

By Examples~\ref{ex:loop} and~\ref{ex:segment} we find
\begin{equation} (8g+4) \lambda(G_i, \mathbf{q}_i;m_i) = g \, m_i \, , \quad f_i(0) \in \varDelta_0 \, , 
\end{equation}
and 
\begin{equation}
 (8g+4) \lambda(G_i, \mathbf{q}_i;m_i) = 4h(g-h) \, m_i \, , \quad f_i(0) \in \varDelta_h \, , \quad h \in \{1,\ldots,[g/2] \} \, . 
\end{equation}

Write $\overline{G}$ for the polarized weighted graph $(G,\mathbf{q})$ with edge lengths given by the vector~$m=(m_1,\ldots,m_r)$. As for each $i=1,\ldots, r$ the polarized graph $(G_i, \mathbf{q}_i)$ is the polarized graph obtained by contracting all edges in $G$ except $e_i$ and endowing the resulting graph with the polarization induced from $\mathbf{q}$, we conclude that
\begin{equation} \label{second_term} (8g+4) \sum_{i=1}^r  \lambda(G_i, \mathbf{q}_i; m_i) = g \, \delta_0(G) + \sum_{h=1}^{[g/2]} 4h(g-h) \, \delta_h(\overline{G}) \, . 
\end{equation}
Here as before $\delta_0(G)$ denotes the total length of the edges of $G$ that do not disconnect the graph $G$ upon removal, and  $\delta_h(\overline{G})$ for $h \in \{1,\ldots,[g/2] \}$ denotes the total length of the edges of $G$ whose removal from $G$ results in the disjoint union of a polarized graph of genus $h$ and a polarized graph of genus $g-h$. 

Combining (\ref{almost_slope}) and (\ref{second_term}) we conclude that
\[ j(p)(m_1,\ldots,m_r) = s(G, \mathbf{q};m_1,\ldots,m_r) \, . \]
Theorem~\ref{main:intro} follows since as homogeneous weight one elements of $\qq(x_1,\ldots,x_r)$, the functions $j(p)$ and $s(G, \mathbf{q})$ are both determined by their values on the elements of $\zz_{>0}^r$. 
\end{proof}

\begin{bibdiv}
\begin{biblist}

\bib{abbf}{article}{
      author={Amini, O.},
      author={Bloch, S.},
      author={Burgos~Gil, J.~I.},
      author={Fres\'{a}n, J.},
       title={Feynman amplitudes and limits of heights},
        date={2016},
        ISSN={1607-0046},
     journal={Izv. Ross. Akad. Nauk Ser. Mat.},
      volume={80},
      number={5},
       pages={5\ndash 40},
         url={https://doi.org/10.4213/im8492},
      review={\MR{3588803}},
}

\bib{acg}{book}{
      author={Arbarello, Enrico},
      author={Cornalba, Maurizio},
      author={Griffiths, Phillip~A.},
       title={Geometry of algebraic curves. {V}olume {II}},
      series={Grundlehren der Mathematischen Wissenschaften},
   publisher={Springer, Heidelberg},
        date={2011},
      volume={268},
        ISBN={978-3-540-42688-2},
         url={https://doi.org/10.1007/978-3-540-69392-5},
        note={With a contribution by Joseph Daniel Harris},
      review={\MR{2807457}},
}

\bib{aut}{article}{
      author={Autissier, Pascal},
       title={Hauteur de {F}altings et hauteur de {N}\'eron-{T}ate du diviseur
  th\^eta},
        date={2006},
        ISSN={0010-437X},
     journal={Compos. Math.},
      volume={142},
      number={6},
       pages={1451\ndash 1458},
  url={https://doi-org.proxy.library.cornell.edu/10.1112/S0010437X0600234X},
      review={\MR{2278754}},
}

\bib{hft}{book}{
      author={Axler, Sheldon},
      author={Bourdon, Paul},
      author={Ramey, Wade},
       title={Harmonic function theory},
     edition={Second},
      series={Graduate Texts in Mathematics},
   publisher={Springer-Verlag, New York},
        date={2001},
      volume={137},
        ISBN={0-387-95218-7},
         url={https://doi-org.ezproxy.leidenuniv.nl/10.1007/978-1-4757-8137-3},
      review={\MR{1805196}},
}

\bib{bf_trop}{article}{
      author={Baker, Matthew},
      author={Faber, Xander},
       title={Metric properties of the tropical {A}bel-{J}acobi map},
        date={2011},
        ISSN={0925-9899},
     journal={J. Algebraic Combin.},
      volume={33},
      number={3},
       pages={349\ndash 381},
  url={https://doi-org.offcampus.lib.washington.edu/10.1007/s10801-010-0247-3},
      review={\MR{2772537}},
}

\bib{bila}{book}{
      author={Birkenhake, Christina},
      author={Lange, Herbert},
       title={Complex abelian varieties},
     edition={Second},
      series={Grundlehren der Mathematischen Wissenschaften},
   publisher={Springer-Verlag, Berlin},
        date={2004},
      volume={302},
        ISBN={3-540-20488-1},
  url={https://doi-org.proxy.library.cornell.edu/10.1007/978-3-662-06307-1},
      review={\MR{2062673}},
}

\bib{bgs}{article}{
      author={Bost, J.-B.},
      author={Gillet, H.},
      author={Soul\'e, C.},
       title={Heights of projective varieties and positive {G}reen forms},
        date={1994},
        ISSN={0894-0347},
     journal={J. Amer. Math. Soc.},
      volume={7},
      number={4},
       pages={903\ndash 1027},
         url={https://doi-org.proxy.library.cornell.edu/10.2307/2152736},
      review={\MR{1260106}},
}

\bib{brelot}{book}{
      author={Brelot, M.},
       title={\'{E}l\'{e}ments de la th\'{e}orie classique du potentiel},
      series={Les Cours de Sorbonne. 3e cycle},
   publisher={Centre de Documentation Universitaire, Paris},
        date={1959},
      review={\MR{0106366}},
}

\bib{bp}{article}{
      author={Brosnan, Patrick},
      author={Pearlstein, Gregory},
       title={Jumps in the {A}rchimedean height},
        date={2019},
        ISSN={0012-7094},
     journal={Duke Math. J.},
      volume={168},
      number={10},
       pages={1737\ndash 1842},
         url={https://doi.org/10.1215/00127094-2018-0056},
      review={\MR{3983292}},
}

\bib{bghdj}{article}{
      author={Burgos~Gil, Jos\'{e}~Ignacio},
      author={Holmes, David},
      author={de~Jong, Robin},
       title={Positivity of the height jump divisor},
        date={2019},
        ISSN={1073-7928},
     journal={Int. Math. Res. Not. IMRN},
      number={7},
       pages={2044\ndash 2068},
         url={https://doi.org/10.1093/imrn/rnx169},
      review={\MR{3938316}},
}

\bib{cr}{article}{
      author={Chinburg, Ted},
      author={Rumely, Robert},
       title={The capacity pairing},
        date={1993},
        ISSN={0075-4102},
     journal={J. reine angew. Math.},
      volume={434},
       pages={1\ndash 44},
         url={http://dx.doi.org/10.1515/crll.1993.434.1},
      review={\MR{1195689}},
}

\bib{ci}{article}{
      author={Cinkir, Zubeyir},
       title={Zhang's conjecture and the effective {B}ogomolov conjecture over
  function fields},
        date={2011},
        ISSN={0020-9910},
     journal={Invent. Math.},
      volume={183},
      number={3},
       pages={517\ndash 562},
         url={https://doi.org/10.1007/s00222-010-0282-7},
      review={\MR{2772087}},
}

\bib{Diestel}{book}{
      author={Diestel, Reinhard},
       title={Graph theory},
     edition={Fifth},
      series={Graduate Texts in Mathematics},
   publisher={Springer, Berlin},
        date={2018},
      volume={173},
        ISBN={978-3-662-57560-4; 978-3-662-53621-6},
      review={\MR{3822066}},
}

\bib{fa}{article}{
      author={Faltings, Gerd},
       title={Calculus on arithmetic surfaces},
        date={1984},
        ISSN={0003-486X},
     journal={Ann. of Math. (2)},
      volume={119},
      number={2},
       pages={387\ndash 424},
         url={https://doi-org.proxy.library.cornell.edu/10.2307/2007043},
      review={\MR{740897}},
}

\bib{fc}{book}{
      author={Faltings, Gerd},
      author={Chai, Ching-Li},
       title={Degeneration of abelian varieties},
      series={Ergebnisse der Mathematik und ihrer Grenzgebiete (3)},
   publisher={Springer-Verlag, Berlin},
        date={1990},
      volume={22},
        ISBN={3-540-52015-5},
  url={https://doi-org.proxy.library.cornell.edu/10.1007/978-3-662-02632-8},
      review={\MR{1083353}},
}

\bib{hain_biext}{article}{
      author={Hain, Richard},
       title={Biextensions and heights associated to curves of odd genus},
        date={1990},
        ISSN={0012-7094},
     journal={Duke Math. J.},
      volume={61},
      number={3},
       pages={859\ndash 898},
         url={https://doi.org/10.1215/S0012-7094-90-06133-2},
      review={\MR{1084463}},
}

\bib{hain_normal}{incollection}{
      author={Hain, Richard},
       title={Normal functions and the geometry of moduli spaces of curves},
        date={2013},
   booktitle={Handbook of moduli. {V}ol. {I}},
      series={Adv. Lect. Math. (ALM)},
      volume={24},
   publisher={Int. Press, Somerville, MA},
       pages={527\ndash 578},
      review={\MR{3184171}},
}

\bib{hrar}{article}{
      author={Hain, Richard},
      author={Reed, David},
       title={On the {A}rakelov geometry of moduli spaces of curves},
        date={2004},
        ISSN={0022-040X},
     journal={J. Differential Geom.},
      volume={67},
      number={2},
       pages={195\ndash 228},
         url={http://projecteuclid.org/euclid.jdg/1102536200},
      review={\MR{2153077}},
}

\bib{dj_second}{article}{
      author={Jong, Robin~de},
       title={Second variation of {Z}hang's {$\lambda$}-invariant on the moduli
  space of curves},
        date={2013},
        ISSN={0002-9327},
     journal={Amer. J. Math.},
      volume={135},
      number={1},
       pages={275\ndash 290},
         url={https://doi.org/10.1353/ajm.2013.0008},
      review={\MR{3022965}},
}

\bib{dj_NT_height}{article}{
      author={Jong, Robin~de},
       title={N\'{e}ron-{T}ate heights of cycles on {J}acobians},
        date={2018},
        ISSN={1056-3911},
     journal={J. Algebraic Geom.},
      volume={27},
      number={2},
       pages={339\ndash 381},
         url={https://doi.org/10.1090/jag/700},
      review={\MR{3764279}},
}

\bib{dj_stable}{article}{
      author={Jong, Robin~de},
       title={Faltings delta-invariant and semistable degeneration},
        date={2019},
        ISSN={0022-040X},
     journal={J. Differential Geom.},
      volume={111},
      number={2},
       pages={241\ndash 301},
         url={https://doi.org/10.4310/jdg/1549422102},
      review={\MR{3909908}},
}

\bib{djs_cross}{unpublished}{
      author={Jong, Robin~de},
      author={Shokrieh, Farbod},
       title={Metric graphs, cross ratios, and {R}ayleigh's laws},
        date={2018},
        note={Preprint available at
  \href{http://arxiv.org/abs/1810.02638}{ar{X}iv:1810.02638}},
}

\bib{djs}{unpublished}{
      author={Jong, Robin~de},
      author={Shokrieh, Farbod},
       title={Tropical moments of tropical {J}acobians},
        date={2018},
        note={Preprint available at
  \href{http://arxiv.org/abs/1810.02639}{ar{X}iv:1810.02639}},
}

\bib{kaw}{unpublished}{
      author={Kawazumi, Nariya},
       title={Johnson's homomorphisms and the {A}rakelov-{G}reen function},
        date={2008},
        note={Preprint, available at
  \href{https://arxiv.org/abs/0801.4218}{ar{X}iv:0801.4218}},
}

\bib{kawhandbook}{incollection}{
      author={Kawazumi, Nariya},
       title={Canonical 2-forms on the moduli space of {R}iemann surfaces},
        date={2009},
   booktitle={Handbook of {T}eichm\"{u}ller theory. {V}ol. {II}},
      series={IRMA Lect. Math. Theor. Phys.},
      volume={13},
   publisher={Eur. Math. Soc., Z\"{u}rich},
       pages={217\ndash 237},
         url={https://doi.org/10.4171/055-1/7},
      review={\MR{2497786}},
}

\bib{primer}{book}{
      author={Krantz, Steven~G.},
      author={Parks, Harold~R.},
       title={A primer of real analytic functions},
     edition={Second},
      series={Birkh\"{a}user Advanced Texts: Basler Lehrb\"{u}cher.
  [Birkh\"{a}user Advanced Texts: Basel Textbooks]},
   publisher={Birkh\"{a}user Boston, Inc., Boston, MA},
        date={2002},
        ISBN={0-8176-4264-1},
         url={https://doi.org/10.1007/978-0-8176-8134-0},
      review={\MR{1916029}},
}

\bib{moriw}{article}{
      author={Moriwaki, Atsushi},
       title={Relative {B}ogomolov's inequality and the cone of positive
  divisors on the moduli space of stable curves},
        date={1998},
        ISSN={0894-0347},
     journal={J. Amer. Math. Soc.},
      volume={11},
      number={3},
       pages={569\ndash 600},
         url={https://doi.org/10.1090/S0894-0347-98-00261-6},
      review={\MR{1488349}},
}

\bib{mu}{article}{
      author={Mumford, D.},
       title={Hirzebruch's proportionality theorem in the noncompact case},
        date={1977},
        ISSN={0020-9910},
     journal={Invent. Math.},
      volume={42},
       pages={239\ndash 272},
         url={https://doi.org/10.1007/BF01389790},
      review={\MR{471627}},
}

\bib{pe}{article}{
      author={Pearlstein, Gregory},
       title={{${\rm SL}_2$}-orbits and degenerations of mixed {H}odge
  structure},
        date={2006},
        ISSN={0022-040X},
     journal={J. Differential Geom.},
      volume={74},
      number={1},
       pages={1\ndash 67},
         url={http://projecteuclid.org/euclid.jdg/1175266181},
      review={\MR{2260287}},
}

\bib{pp}{article}{
      author={Pearlstein, Gregory},
      author={Peters, Chris},
       title={Differential geometry of the mixed {H}odge metric},
        date={2019},
        ISSN={1019-8385},
     journal={Comm. Anal. Geom.},
      volume={27},
      number={3},
       pages={671\ndash 742},
         url={https://doi.org/10.4310/CAG.2019.v27.n3.a6},
      review={\MR{4003007}},
}

\bib{schmid}{article}{
      author={Schmid, Wilfried},
       title={Variation of {H}odge structure: the singularities of the period
  mapping},
        date={1973},
        ISSN={0020-9910},
     journal={Invent. Math.},
      volume={22},
       pages={211\ndash 319},
         url={https://doi.org/10.1007/BF01389674},
      review={\MR{382272}},
}

\bib{we}{article}{
      author={Wentworth, R.},
       title={The asymptotics of the {A}rakelov-{G}reen's function and
  {F}altings' delta invariant},
        date={1991},
        ISSN={0010-3616},
     journal={Comm. Math. Phys.},
      volume={137},
      number={3},
       pages={427\ndash 459},
         url={http://projecteuclid.org/euclid.cmp/1104202735},
      review={\MR{1105425}},
}

\bib{wi}{article}{
      author={Wilms, Robert},
       title={New explicit formulas for {F}altings' delta-invariant},
        date={2017},
        ISSN={0020-9910},
     journal={Invent. Math.},
      volume={209},
      number={2},
       pages={481\ndash 539},
         url={https://doi.org/10.1007/s00222-016-0713-1},
      review={\MR{3674221}},
}

\bib{wi_deg}{unpublished}{
      author={Wilms, Robert},
       title={Degeneration of {R}iemann theta functions and of the
  {Z}hang-{K}awazumi invariant with applications to a uniform {B}ogomolov
  conjecture},
        date={2021},
        note={Preprint available at
  \href{http://arxiv.org/abs/2101.04024}{ar{X}iv:2101.04024}},
}

\bib{zhgs}{article}{
      author={Zhang, Shou-Wu},
       title={Gross-{S}choen cycles and dualising sheaves},
        date={2010},
        ISSN={0020-9910},
     journal={Invent. Math.},
      volume={179},
      number={1},
       pages={1\ndash 73},
         url={https://doi.org/10.1007/s00222-009-0209-3},
      review={\MR{2563759}},
}

\bib{zh_hodge}{unpublished}{
      author={Zhang, Shou-Wu},
       title={Positivity of heights of codimension 2 cycles over function
  fields of characteristic 0},
        date={2010},
        note={Preprint, available at
  \href{https://arxiv.org/abs/1001.4788}{ar{X}iv:1001.4788}},
}

\bib{zhadm}{article}{
      author={Zhang, Shouwu},
       title={Admissible pairing on a curve},
        date={1993},
        ISSN={0020-9910},
     journal={Invent. Math.},
      volume={112},
      number={1},
       pages={171\ndash 193},
         url={https://doi-org.proxy.library.cornell.edu/10.1007/BF01232429},
      review={\MR{1207481}},
}

\end{biblist}
\end{bibdiv}

\vspace{0.5cm}
\end{document}